\newtheorem{theorem}{Theorem}[section]
\newtheorem{corollary}[theorem]{Corollary}
\newtheorem{proposition}[theorem]{Proposition}
\newtheorem{definition}[theorem]{Definition}
\theoremstyle{remark}
\newtheorem{rem}[theorem]{Remark}
\theoremstyle{definition}
\numberwithin{equation}{section}
\theoremstyle{theorem}
\theoremstyle{theorem}
\newtheorem*{mythma}{Theorem A}
\theoremstyle{theorem}
\newtheorem*{mythmb}{Theorem B}
\theoremstyle{theorem}
\newtheorem*{mythmc}{Theorem C}
\theoremstyle{theorem}
\theoremstyle{theorem}
\newcommand{\set}[1]{\left\{#1\right\}}
\newcommand{\R}{\mathbb R}
\newcommand{\Z}{\mathbb Z}
\newcommand{\N}{\mathbb N}
\newcommand{\C}{\mathbb C}
\newcommand{\HH}{\mathcal{H}}
\newcommand{\LL}{\mathcal{L}}
\newcommand{\PP}{\mathcal{P}}
\newcommand{\II}{\mathcal{I}}
\newcommand{\BBB}{\mathscr{B}}
\newcommand{\HHH}{\mathscr{H}}
\newcommand{\JJJ}{\mathscr{J}}
\newcommand{\LLL}{\mathscr{L}}
\newcommand{\MMM}{\mathscr{M}}
\newcommand{\PPP}{\mathrm{P}}
\newcommand{\EEE}{\mathbf{E}}
\newcommand{\FFF}{\mathbf{F}}
\newcommand{\eps}{\varepsilon}
\newcommand{\hyper}[2]{{}_{_{#1}}F_{_{#2}}}
\newcommand{\hyperbold}[2]{{}_{_{#1}}\mathbf{F}_{_{#2}}}
\numberwithin{equation}{section}
\begin{document}

\title{Series expansions for Maass forms on the full modular group from the Farey transfer operators}

\author{Claudio Bonanno\footnote{Dipartimento di Matematica,
Universit\`a di Pisa, Largo Bruno Pontecorvo 5, I-56127 Pisa, Italy. Email: claudio.bonanno@unipi.it}
\and Stefano Isola\footnote{Scuola di Scienze e Tecnologie, Universit\`a di Camerino, via Madonna delle Carceri, I-62032 Camerino, Italy. Email: stefano.isola@unicam.it}}
\date{}

\maketitle

\begin{abstract}
We deepen the study of the relations previously established by Mayer, Lewis and Zagier, and the authors, among the eigenfunctions of the transfer operators of the Gauss and the Farey maps, the solutions of the Lewis-Zagier three-term functional equation and the Maass forms on the modular surface $PSL(2,\Z)\backslash \HH$. In particular we introduce an ``inverse'' of the integral transform studied by Lewis and Zagier, and use it to obtain new series expansions for the Maass cusp forms and the non-holomorphic Eisenstein series restricted to the imaginary axis. As corollaries we obtain further information on the Fourier coefficients of the forms, including a new series expansion for the divisor function.
\end{abstract}

\section{Introduction} \label{intro}

One of the most interesting objects in the mathematics literature are the Maass forms on the full modular group $\mathrm{PSL}(2,\Z)$. Letting $\Delta := -y^{2} \left( \frac{\partial^{2}}{\partial x^{2}} + \frac{\partial^{2}}{\partial y^{2}}\right)$ denote the hyperbolic Laplacian, Maass forms are smooth $\mathrm{PSL}(2,\Z)$-invariant complex functions $\phi$ defined on the upper half-plane $\HH=\set{z=x+i y \, : \, y>0}$, increasing less than exponentially as $y\to \infty$, and satisfying $\Delta \phi = \lambda \phi$ for some $\lambda \in \C$. Maass forms divide into cusp and non-cusp forms according to their behaviour at the cusp of the modular surface $\mathrm{PSL}(2,\Z)\backslash\HH$, and into even and odd forms according to whether $\phi(-x+i y) = \pm \phi(x+i y)$.

\noindent
Despite their importance, Maass cusp forms remain mysterious objects. No explicit construction exists and all basic information about their existence comes from the Selberg trace formula. Much more is known for the non-cusp forms, which are generated by the non-holomorphic Eisenstein series. The standard approach to Maass forms uses the methods of harmonic analysis on $\HH$, which leads to the Fourier expansion of the forms in terms of Whittaker function (see e.g. \cite{iwaniec}).

\noindent
In recent years, a new approach to Maass forms has been developed using the relation between the Selberg zeta function $Z(q)$ and the Fredholm determinant of the transfer operators $\LL_{q}$ of the Gauss map (see \cite{Ma3,CM1}), also in connection with a functional approach introduced in \cite{Le,LeZa}. This connection becomes clearer if one considers the transfer operators $\PP_q$ of the Farey map, a ``slow'' version of the Gauss map, as shown by the authors in \cite{BI}, where we also studied the properties of the eigenfunctions of the operators $\PP_q$. In this paper we continue the work set out in \cite{BI}, by transferring the information on the eigenfunctions of $\PP_q$ to Maass forms. In particular we use the integral transform studied in \cite{LeZa} to obtain series expansions for the Maass forms restricted to the imaginary axis that to our knowledge are entirely new. We first obtain expansions in terms of Legendre functions $\PPP_{\nu}^{\mu}$. In Section \ref{sec-u} we prove the following
\begin{mythma}
If $u(x+iy)$ is an even Maass cusp form on $\mathrm{PSL}(2,\Z)$ with eigenvalue $q(1-q)$, then there exists a sequence $\{a_{n,q}\}$ satisfying $\limsup_{n} |a_{n,q}|^{\frac 1n} \le 1$, such that 
\[
u(iy) = \sum_{n=0}^{\infty}\, (-1)^{n}\, a_{n,q}\, \frac{y^{\frac 12}\, \PPP^{-q+\frac 12}_{n+q-\frac 12} \left( \frac{1}{(1+y^{2})^{\frac 12}} \right) + y^{n+q}\, \PPP^{-q+\frac 12}_{n+q-\frac 12} \left( \frac{y}{(1+y^{2})^{\frac 12}} \right)}{(1+y^{2})^{\frac n2 + \frac q2 + \frac 14}}
\]
uniformly in $y$ on any compact interval in $(0,+\infty)$. 

\vskip 0.1cm
\noindent
The non-holomorphic Eisenstein series $E(x+iy,q)$ can be written for $x=0$ as a meromorphic function on the half-plane $\Re(q)>0$ as
\[
\begin{aligned}
E(iy,q) = \, & \zeta(2q)\, \Big( y^{q} + y^{-q} \Big) - 2\, \zeta(2q)\, \left( \frac{y}{1+y^2} \right)^q +\\[0.2cm]
& + 2^{q+\frac 12}\, \Gamma\left(q+\frac 12\right) \sum_{n=0}^{\infty}\, (-1)^{n}\, b_{n,q}\, \frac{y^{\frac 12}\, \PPP^{-q+\frac 12}_{n+q-\frac 12} \left( \frac{1}{(1+y^{2})^{\frac 12}} \right) + y^{n+q}\, \PPP^{-q+\frac 12}_{n+q-\frac 12} \left( \frac{y}{(1+y^{2})^{\frac 12}} \right)}{(1+y^{2})^{\frac n2 + \frac q2 + \frac 14}} 
\end{aligned}
\]
with
\[
b_{n,q} := (-1)^{n}\, \frac{\Gamma(n+2q)}{(n+1)!\, \Gamma(2q)}\, \sum_{i=0}^{n}\, {n+1\choose i}\, B_{i}\, \zeta(2q-1+i)\, ,
\]
where $\{B_i\}$ are the Bernoulli numbers and $\zeta(s)$ is the Riemann zeta function.
\end{mythma}
\noindent
An analogous result is given in Theorem \ref{ext-to-0-odd} for odd Maass cusp forms. Moreover, in Appendix B we show a curious way of using the Legendre functions to expand $y^q$, and then to write the non-holomorphic Eisenstein series in terms of the Legendre functions.

\noindent
Then in Section \ref{sec-courier} we study the Fourier coefficients of Maass forms $-$ which for the non-cusp case are related to the divisor function $\sigma_{\ell}(n)$ $-$ and obtain some results which can be summarized in the following
\begin{mythmb}
Let $\{c_{n,q}\}$ denote the coefficients of the Fourier expansion of an even Maass cusp form with eigenvalue $q(1-q)$. Then we have, up to a constant depending on $q$, 
\[
c_{n,q} = 2\, n^{q-\frac 12}\sum_{k=1}^{\infty}\, \frac{(-1)^{k} a_{2k,q} }{\Gamma(2k+2q)} (2\pi n)^{2k}
\] 
where $\{a_{n,q}\}$ is the sequence introduced in Theorem A. 
\vskip 0.1cm
\noindent
In the case of non-cusp forms we prove that for $n\ge 1$ and $\Re(q)>0$ it holds
\[
\sigma_{2q-1}(n) = n^{2q-1}\, \sum_{k=1}^{\infty}\, \frac{(-1)^k\, \tilde A_{2k,q}}{(2k)!}\, (2\pi n)^{2k}
\]
with
\[
\tilde A_{k,q} :=  \frac{1}{k+1}\, \sum_{i=2}^{k}\, {k+1\choose i}\, B_{i}\, \zeta(2q-1+i)\, .
\]
\end{mythmb}
\noindent
In Remark \ref{rem-ram} we show that the series expansion for the divisor function can be considered an extension of the Ramanujan expansion.

\noindent
Finally, in Section \ref{sec-power} we exploit the properties of the Legendre function to obtain new series expansions for the Maass forms. In the cusp case these are only formal since we don't have control on the coefficients, whereas in the non-cusp case we prove 
\begin{mythmc}
For $q$ with $\Re(q)>0$ it holds
\[
\begin{aligned}
E(iy,q) = &\,  2 \, \left(\frac{y}{1+y^2}\right)^q \, \left[ \zeta(2q-1)\, \hyper{2}{1}\left(1,q\, ;\, \frac 32\, ;\, \frac{1}{1+y^2}\right) + \zeta(2q-1)\, \hyper{2}{1}\left(1,q\, ;\, \frac 32\, ;\, \frac{y^2}{1+y^2}\right) \right] + \\[0.3cm]
& + 4\, \left(\frac{y}{1+y^2}\right)^q \, \sum_{s=1}^\infty\, 2^s\, \frac{\Gamma(s+q)}{s!\, \Gamma(q)} \left( \sum_{k=1}^s\, {s\choose k-1}\, \frac{(-1)^k}{2^k\, (s+k)}\, B_{s+k}\, \zeta(2q-1+s+k) \right) \, \frac{1+y^{2s}}{(1+y^{2})^{s}}
\end{aligned}
\]
uniformly in $y$ on any compact interval in $(0,+\infty)$.
\end{mythmc}

\noindent We believe that these new series expansions will turn useful in the study of Maass forms and their Fourier coefficients, as they involve the coefficients $\{a_{n,q}\}$ which come from the totally different approach described below. In particular we hope that this will stimulate new numerical investigations of the coefficients $\{a_{n,q}\}$ (see also Remark \ref{rem-ref}).

For the benefit of the reader we now briefly recall the main steps of the approach to the Maass forms as developed by Mayer, Lewis, Zagier and the authors in \cite{Ma3, LeZa, BI}.

\noindent
In \cite{Ma3} Mayer used the definition of the Selberg zeta function $Z(q)$ as a product over the length spectrum of $\mathrm{PSL}(2,\Z)\backslash\HH$ to prove a relation between $Z(q)$ and the Smale-Ruelle zeta function for the geodesic flow on the modular surface. We recall that the length spectrum of $\mathrm{PSL}(2,\Z)\backslash\HH$ is the set of lengths of the closed geodesics on $\mathrm{PSL}(2,\Z)\backslash\HH$, and the closed geodesics appear in the definition of the Smale-Ruelle zeta function. The aforementioned relation together with results in \cite{Ma2} entails the main result of \cite{Ma3}, the equality
\begin{equation}\label{fund-mayer}
Z(q) = \det (1-\LL_{q})\, \det (1+\LL_{q})\, , \qquad q\in \C\, .
\end{equation}
Here ``$\det$'' indicates the determinant in the sense of Fredholm, and $\LL_{q}$ denotes the meromorphic extension to $q\in \C$ of the family of nuclear of order zero endomorphisms defined by
$$
\left(\LL_{q}h\right) (z) = \sum_{n=1}^{\infty}\, \frac{1}{(z+n)^{2q}}\, h\left(\frac{1}{z+n}\right)
$$
for $\Re(q)> \frac 12$, on the space $H(D)$ of holomorphic functions in the disk $D= \set{z\in \C \, :\, |z-1| < \frac 32}$. The connection \eqref{fund-mayer} comes from the arithmetic properties of the length spectrum of $\mathrm{PSL}(2,\Z)\backslash\HH$ and the fact that the endomorphisms $\LL_{q}$ are the transfer operators of the Gauss map, which generates a dynamical system related to the continued fractions expansion of a real number. Combining Mayer's equality \eqref{fund-mayer} sharpened by Efrat in \cite{efrat} with the known positions of the zeroes of $Z(q)$ as implied by the Selberg trace formula, one can state the following
\begin{theorem}[\cite{efrat},\cite{Ma3}] \label{thm-mayer}
Let $q=\xi +i \eta$ be a complex number with $\xi>0$ and $q\not=\frac 12$. Then:
\begin{enumerate}[(i)]
\item there exists a nonzero $h\in H(D)$ such that $\LL_{q}h = h$ if and only if $q$ is either an even spectral parameter of $\Gamma$, that is there exists an even Maass cusp form $u$ such that $\Delta u = q(1-q) u$, or $2q$ is a non-trivial zero of the Riemann zeta function, or $q=1$;
\item there exists a nonzero $h\in H(D)$ such that $\LL_{q}h = -h$ if and only if $q$ is an odd spectral parameter of $\Gamma$, that is there exists an odd Maass cusp form $u$ such that $\Delta u = q(1-q) u$.
\end{enumerate}
\end{theorem}

\noindent
In the papers \cite{Le,LeZa} Lewis and Zagier introduced a three-term functional equation whose solutions are in one-to-one correspondence with the Maass cusp and non-cusp forms. Using the results for the spectral parameters of $\mathrm{PSL}(2,\Z)\backslash\HH$ and for the Maass non-cusp forms, they proved the following result.
\begin{theorem}[\cite{LeZa}] \label{thm-lz}
There is an isomorphism between the Maass cusp forms with eigenvalue $q(1-q)$ and the space of real-analytic solutions of the three-term functional equation
\begin{equation}\label{fund-lz}
\psi(x) = \psi(x+1) + (x+1)^{-2q} \psi\left( \frac{x}{x+1} \right)\, , \qquad x\in \R^{+}
\end{equation}
with the conditions
\begin{equation}\label{fund-lz-cond}
\psi(x)= O(1) \ \text{as }\, x\to 0^{+}\, , \quad \psi(x)= O(1/x) \ \text{as }\, x\to +\infty
\end{equation}
Moreover, the Maass non-cusp forms, which for any given $q$ lie in a one-dimensional space generated by the non-holomorphic Eisenstein series $E(z,q)$, are in one-to-one correspondence with the functions
\begin{equation}\label{zagier-psi}
\psi_{q}^{+}(x) = \frac{\zeta(2q)}{2} \left( 1 + x^{-2q} \right) + \sum_{m,n\ge 1} \frac{1}{(mx+n)^{2q}}\, , \qquad \Re(q)>1
\end{equation}
which, when multiplied by $\frac{\Gamma(2q)}{\Gamma(q-1)}$, can be analytically continued to $q\in \C$ as solutions of \eqref{fund-lz}.
\end{theorem}

\noindent
In \cite{LeZa} the solutions of equation \eqref{fund-lz} are called \textit{period functions} because of an analogy, explored in the paper, with the classical Eichler-Shimura-Manin period polynomials of the holomorphic cusp forms. Moreover, the period functions associated to a Maass forms are divided into even and odd functions.

\noindent
Putting together Theorems \ref{thm-mayer} and \ref{thm-lz} we have the following situation for the zeroes of the Selberg zeta function $Z(q)$:
\begin{itemize}
\item if $q$ is an even spectral parameter with $\xi= \frac 12$, then there exist a nonzero $h\in H(D)$ such that $\LL_{q}h = h$ and an even real-analytic function $\psi(x)$ which satisfies \eqref{fund-lz} with conditions \eqref{fund-lz-cond};

\item if $q$ is an odd spectral parameter with $\xi= \frac 12$, then there exist a nonzero $h\in H(D)$ such that $\LL_{q}h = -h$ and an odd real-analytic function $\psi(x)$ which satisfies \eqref{fund-lz} with conditions \eqref{fund-lz-cond};

\item if $2q$ is a non-trivial zero of the Riemann zeta function, then there exist a nonzero $h\in H(D)$ such that $\LL_{q}h = h$ and \eqref{fund-lz} has solutions given by multiples of the analytic continuation of the function $\psi_{q}^{+}$;

\item if $q=1$ then there exist a nonzero $h\in H(D)$ such that $\LL_{q}h = h$, in fact we have $h(x)=\frac{1}{x+1}$, and \eqref{fund-lz} has solutions given by multiples of the function $\psi_{1}^{+}(x) = \frac 1x$.
\end{itemize}
Moreover there is an explicit relation between the eigenfunctions of the operator $\LL_{q}$ and the period functions relative to the same $q$. Namely $h(x)=\psi(x+1)$, and the same holds on $D$, where $\psi(z+1)$ is the holomorphic extension of $\psi$ to $\C\setminus (-\infty,0]$.

\noindent
The beauty of Mayer's result lies in the displaying of the power of the theory of transfer operators for dynamical systems, but the spectral properties of the operators $\LL_{q}$ turned out to be difficult to study, see \cite{CM1} and \cite{BJ}. On the other side, Lewis and Zagier approach has the advantage of introducing a relation of Maass forms with solutions of an equation with a finite number of terms, which might be easier to handle.

\noindent
These two aspects are combined in our paper \cite{BI}, where we used a family of signed transfer operators $\PP_{q}^{\pm}$ for the Farey map, a ``slow'' version of the Gauss map, defined for $\xi = \Re(q)>0$ by
\[
(\PP_{q}^{\pm}f)(z) = (\PP_{0,q}f)(z)\pm (\PP_{1,q}f)(z) := \left( \frac{1}{z+1} \right)^{2q} f\left( \frac{z}{z+1} \right) \pm \left( \frac{1}{z+1} \right)^{2q} f\left( \frac{1}{z+1} \right)
\]
where $z\in B = \set{z\in \C\ :\ |z-\frac 12| < \frac 12}$ and $f\in H(B)$. We studied the problem of existence of eigenfunctions for $\PP_{q}^{\pm}$ and proved the following
\begin{theorem}[\cite{BI}]\label{uno-bi}
(a) If $f\in H(B)$ satisfies $\PP^{+}_{q}f=\lambda f$ with $\lambda \not= 0$ then $f\in H(\set{\Re(z)>0})$ and we call it even in the sense that $\II_{q}f = f$, where
\begin{equation}\label{invo}
(\II_{q}f)(z):= \frac{1}{z^{2q}}\, f\left( \frac 1z \right)
\end{equation}
Moreover it satisfies
\begin{equation}\label{gen-lz}
\lambda f(z) = f(z+1) + (z+1)^{-2q} f\left( \frac{z}{z+1} \right)\, , \qquad \Re(z)>0\, .
\end{equation}
(b) If $f\in H(B)$ satisfies $\PP^{-}_{q}f=\lambda f$ with $\lambda \not= 0$ then $f\in H(\set{\Re(z)>0})$ and we call it odd in the sense that $\II_{q}f = - f$. Moreover it satisfies \eqref{gen-lz}.\\[0.2cm]
(c) If $f\in H(\set{\Re(z)>0})$ satisfies \eqref{gen-lz} for $\lambda\not = 0$, then $\PP_{q}^{\pm}(f \pm \II_{q}f) = \lambda (f \pm \II_{q}f)$.\\[0.2cm]
(d) If $f\in H(B)$ satisfies $\PP^{+}_{q}f=\lambda f$ with $\lambda \not\in [0,1)$ then there exists $\phi\in L^{2}((0,+\infty),t^{2\xi -1}e^{-t}dt)$ such that $f$ can be written as
\begin{equation}\label{forma-eig-p}
f(z) = c\, \frac{\lambda^{\frac 1z}}{z^{2q}} + b\, \frac{\Gamma(2q-1)}{\Gamma(2q)} \, \frac 1z + \frac{1}{z^{2q}}\, \int_{0}^{\infty}\, e^{-\frac tz}\, \phi(t)\, t^{2q-1}\, dt\, , \qquad \Re(z)>0\, ,
\end{equation}
where $c,b \in \C$, $\phi(0)$ is finite and $\phi(t)-\phi(0) = O(t)$ as $t\to 0^{+}$, and the last term is bounded as $\Re(z) \to 0$. Moreover if $\lambda\not =1$ then $b=0$.\\[0.2cm]
(e) If $f\in H(B)$ satisfies $\PP^{-}_{q}f=\lambda f$ with $\lambda \not\in [0,1)$ then there exists $\phi\in L^{2}((0,+\infty),t^{2\xi -1}e^{-t}dt)$ such that $f$ can be written as
\begin{equation}\label{forma-eig-m}
f(z) = c\, \frac{\lambda^{\frac 1z}}{z^{2q}} + \frac{1}{z^{2q}}\, \int_{0}^{\infty}\, e^{-\frac tz}\, \phi(t)\, t^{2q-1}\, dt\, , \qquad \Re(z)>0\, ,
\end{equation}
where $c \in \C$, $\phi(0)$ is finite and $\phi(t)-\phi(0) = O(t)$ as $t\to 0^{+}$, and the last term is bounded as $\Re(z) \to 0$.
\end{theorem}
\noindent
Using the operators $\PP_{q}^{\pm}$ we introduced a generalization of the transfer operators $\LL_{q}$, namely the  two variable operator-valued function  $\LL_{q,w}$ formally defined as
$$
\LL_{q,w} = w\, \PP_{1,q} (1-w\, \PP_{0,q})^{-1}
$$
We proved that as operators acting on the Banach space $H_{\infty}(D_{\eps})$ of functions holomorphic on $D_{\eps}=\set{z\in \C\ :\ |z-1| < \frac 32-\eps}$ and bounded on $\overline{D_{\eps}}$, they are nuclear of order zero for $\Re(q)>0$ and $w\in \C\setminus (1,\infty)$. Moreover the function $q\mapsto \LL_{q,w}$ is analytic in $\Re(q)>0$ for any $w\in \C\setminus [1,\infty)$ and is meromorphic in $\Re(q)>0$ for $w=1$ with a simple pole at $q=\frac 12$. Analogously the function $w\mapsto \LL_{q,w}$ is analytic in $w\in \C\setminus [1,\infty)$ for any $q$ with $\Re(q)>0$. Hence we can compute the Fredholm determinants of the operators $(1\pm \LL_{q,w})$ and define the \textit{two-variable Selberg zeta function}
\[
Z(q,w) := \det (1-\LL_{q,w})\, \det (1+\LL_{q,w})
\]
for $\Re(q)>0$ and $w\in \C\setminus (1,\infty)$. For $w=1$ the function $Z(q,1)$ is meromorphic in $\Re(q)>0$ with a simple pole at $q=\frac 12$ and coincides with the Selberg zeta function $Z(q)$.

\noindent
Finally we obtained a relation between the eigenfunctions of $\LL_{q,w}$ and those of $\PP_{q}^{\pm}$, and therefore, thanks to Theorem \ref{uno-bi}-(a,b),  a relation between the solutions of the generalized three-term functional equation \eqref{gen-lz} and the zeroes of the function $Z(q,w)$. More precisely, using \cite[Theorem 3.6]{BI} and \cite[Corollary 3.7]{BI}, and the definition of $Z(q,w)$, together with the spectral characterisation of the zeroes of the Selberg zeta function given in Theorem \ref{thm-mayer}, it follows
\begin{theorem}[\cite{BI}]\label{due-bi}
(a) Let $w=1$. Then:
\begin{itemize}
\item $q$ is an even spectral parameter with $\xi= \frac 12$ if and only if there exists an even $f\in H(B)$ such that $\PP_{q}^{+}f = f$, $f$ satisfies \eqref{gen-lz} with $\lambda=1$ (or \eqref{fund-lz}) and it can be written as in \eqref{forma-eig-p} with $c=b=0$;

\item $q$ is an odd spectral parameter with $\xi= \frac 12$ if and only if there exists an odd $f\in H(B)$ such that $\PP_{q}^{-}f = f$, $f$ satisfies \eqref{gen-lz} with $\lambda=1$ (or \eqref{fund-lz}) and it can be written as in \eqref{forma-eig-m} with $c=0$;

\item $2q$ is a non-trivial zero of the Riemann zeta function if and only if there exists an even $f\in H(B)$ such that $\PP_{q}^{+}f = f$, $f$ satisfies \eqref{gen-lz} with $\lambda=1$ (or \eqref{fund-lz}) and it can be written as in \eqref{forma-eig-p} with $c=0$ and $b\not= 0$;

\item $q=1$ is a zero of $Z(q,1)$ since $f(z) = \frac 1z$ satisfies $\PP_{1}^{+}f = f$.
\end{itemize}
(b) Let $w\in \C\setminus [1,\infty)$. Then:
\begin{itemize}
\item $q$ is an ``even'' zero of $Z(q,w)$ if and only if there exists an even $f\in H(B)$ such that $\PP_{q}^{+}f = \frac 1w f$, $f$ satisfies \eqref{gen-lz} with $\lambda=\frac 1w$ and it can be written as in \eqref{forma-eig-p} with $c=b=0$;

\item $q$ is an ``odd'' zero of $Z(q,w)$ if and only if there exists an odd $f\in H(B)$ such that $\PP_{q}^{-}f = \frac 1w f$, $f$ satisfies \eqref{gen-lz} with $\lambda=\frac 1w$ and it can be written as in \eqref{forma-eig-m} with $c=0$.
\end{itemize}
\end{theorem}

\noindent
Since by Theorem \ref{uno-bi}-(a,b), eigenfunctions of $\PP_{q}^{\pm}$ satisfy a three-term equation which is a generalization of the Lewis-Zagier equation \eqref{fund-lz}, we call the functions $f$ of Theorem \ref{due-bi} \textit{generalized period functions (gpf)} associated to the zeroes of the zeta function $Z(q,w)$, \textit{even} and \textit{odd} according to whether they correspond to even or odd zeroes. In addition we distinguish the two classes of gpf with $b=0$, which we call $0$-gpf, and $b\not= 0$, which we call $b$-gpf. In the $w=1$ case the $0$-gpf correspond to the Maass cusp forms and the $b$-gpf to the non-cusp forms. On the contrary in the $w\not= 1$ case the set of $b$-gpf is empty.

\noindent
In Section \ref{sec-u} we consider the general case $w\in \C\setminus (1,\infty)$, so we find a series expansion as in Theorem A also for functions $u_w$ corresponding to $0$-gpf with $w\not= 1$. However it is not clear if these functions play a role in the spectral theory of hyperbolic surfaces. The original aim of this research was exactly to find a characterization for general $u_w$, and even if we don't achieve this result in this paper, we believe this is an interesting problem to be studied.

Finally we would like to point out that this paper includes one possible extension of the works \cite{LeZa,Ma3}. Other directions can be found in \cite{DH,CM2,MMS}, where the authors study the relation between period functions and Maass wave forms for subgroups of $\mathrm{PSL}(2,\Z)$, and in \cite{MP,pohl}, where the role of the ``slow'' dynamics and its advantage of introducing a transfer operator with finitely many terms is studied in relation to the cohomological approach of \cite{BLZ}.

\section{Notations for special functions and integral transforms} \label{sec:sfit}

We use standard notations: $\hyper{2}{1}(a,b;c;x)$ for the hypergeometric function; $J_{\nu}(z)$ for the Bessel functions of first kind; $K_{\nu}(z)$ for the modified Bessel functions of the third kind; $L_{n}^{\nu}(t)$ for the generalized Laguerre polynomials; $\Gamma(\nu)$ for the Gamma function; $\zeta(q)$ for the Riemann zeta function; $\PPP_{\nu}^{\mu}$ for the Legendre functions in the real interval $(-1,1)$.

\noindent
In the following we use the following integral transforms:
\begin{itemize}
\item \textit{Laplace transform} $$\LLL[\varphi](z) := \int_{0}^{\infty}\, e^{-zt}\, \varphi(t)\, dt$$

\item \textit{Symmetric Hankel transform} $$\HHH_{\nu}[\varphi](z) := \int_{0}^{\infty}\, J_{\nu}(tz)\, \sqrt{tz}\ \varphi(t)\, dt$$

\item \textit{Asymmetric Hankel transform} $$\JJJ_{\nu}[\varphi](z) := \int_{0}^{\infty}\, J_{2\nu}(2\sqrt{tz})\, \left( \frac tz \right)^{\nu}\, \varphi(t)\, dt$$

\item \textit{Borel generalized transform} $${\BBB}_\nu [\varphi](z):= \frac{1}{z^{2\nu}}\ \int_0^\infty e^{-\frac{t}{z}}\, t^{2\nu -1} \varphi (t)\, dt$$

\item \textit{Mellin transform} $${\MMM} [\varphi](\rho):= \int_0^\infty \varphi(t)\, t^{\rho -1} \, dt$$
\end{itemize}
The asymmetric Hankel transform has been introduced in \cite{Le}, and the Borel generalized transform in \cite{Is}. For the other transforms see \cite{E2} and \cite{GR}. For the convergence of the Hankel transforms, we recall that  the Bessel function $J_\nu(t)$ satisfies the estimates $J_\nu(t) = O(t^\nu)$ as $t\to 0^+$, and $J_\nu(t) = O(t^{-\frac 12})$ as $t\to \infty$ (see \cite[vol. II]{E}).

\noindent
We also use the notation
\[
\chi_{\alpha}(t) := t^{\alpha}\quad \text{and} \quad \exp_{\alpha}(t):= e^{\alpha t}\, , \quad \alpha \in \C
\]
and write $q= \xi + i \eta$, with $\xi>0$ and $\eta \in \R$. Moreover we write $f(z) \doteq g(z)$ for two functions $f,g$ which coincides up to a non-vanishing multiplication constant possibly depending only on $q$.

\section{From gpf to Maass forms on the imaginary axis} \label{sec-u}

To study the set of gpf, we used in \cite{BI} the integral transform $\BBB_{q}$ on the spaces of functions $L^p(m_q)$ in $\R^{+}$ with $m_q(dt)= t^{2\xi-1}\, e^{-t}\, dt$. Letting
\[
L^{p}(m_{q}) := \set{\phi :\R^{+}\to \C\ :\ \int_{0}^{\infty}\, |\phi(t)|^{p}\, t^{2\xi -1} e^{-t}\, dt <\infty}
\]
with the norm
$$
\| \phi \|_{p} := \left( \int_{0}^{\infty}\, |\phi(t)|^{p}\, t^{2\xi -1} e^{-t}\, dt \right)^{\frac 1p}\, ,
$$
it is immediate to check that
$$
L^{1}(m_{q}) \ni \phi \mapsto \BBB_{q}[\phi] \in H(B)
$$
and that $\BBB_{q}$ is continuous on $L^{1}(m_{q})$ with values on $H(B)$ with the standard topology induced by the family of supremum norms on compact subsets of $B$. 
Moreover, since $m_{q}(0,\infty) = \Gamma(2\xi)$, one has $L^{p}(m_{q}) \subset L^{1}(m_{q})$ for all $p\in [1,\infty]$. 

\noindent
We also need to introduce the linear operators $M$ and $N_q$ defined by
\[
M (\phi) (t) := e^{-t}\, \phi(t)
\]
\[
N_q (\phi) (t) := \JJJ_{q-\frac 12}[\exp_{-1}\phi](t) =  \int_0^\infty\ J_{2q-1}(2\sqrt{st}) \left(\frac st\right)^{q-\frac 12}\, e^{-s}\, \phi(s)\, ds\, .
\]
In \cite{BI} it is proved that
\[
L^{2}(m_{q}) \ni \phi \mapsto N_{q}[\phi] \in L^{2}(m_{q})\, .
\]
The same is clearly true also for $M$. Moreover in \cite[Proposition 2.5]{BI}, it is proved that the transfer operators of the Farey map $\PP_q^\pm$ has a particularly nice behaviour with respect to the Borel generalized transform. In particular for all $\phi \in L^2(m_q)$ it holds
\begin{equation}\label{trasf-op-l2}
\PP_q^\pm \Big( \BBB_q \left[ \chi_{-1} + \phi \right] \Big) (z) = \BBB_q \Big[ (M\pm N_q) \left( \chi_{-1} + \phi \right) \Big] (z)\, .
\end{equation}

\noindent
Finally, putting together Theorem 2.8 and Corollary 2.10 in \cite{BI}, we have
\begin{proposition}[\cite{BI}] \label{le-fi}
If $f$ is a generalized period function associated to a zero $q$ and to the eigenvalue $\lambda = \frac 1w$, then there exist $b\in \C$ and a function $\varphi \in L^{2}(m_{q})$ such that
\begin{equation}\label{eq-transf-f}
f(z) = \BBB_{q}\left[ \frac{b}{\Gamma(2q)}\, \chi_{-1} + \varphi \right](z) 
\end{equation}
and 
\[
(M\pm N_{q})\left( \frac{b}{\Gamma(2q)}\, \chi_{-1} + \varphi \right) = \lambda \left( \frac{b}{\Gamma(2q)}\, \chi_{-1} + \varphi \right)\, ,
\]
where the signs ``+'' and ``-'' correspond to the case of even or odd gpf respectively.
Moreover, there exists a sequence $\{ a_{n,q}\}_{n\ge 0}$ with $\limsup |a_{n,q}|^{\frac 1n}\le 1$ such that: in the even case, for $w=1$
\begin{equation}\label{fi-power-b}
\varphi(t) = \frac{e^{-t}}{1-e^{-t}} \, \sum_{n=1}^{\infty}\, \frac{(-1)^{n} a_{n,q}\, t^{n}}{\Gamma(n+2q)} + \frac{a_{0,q}}{\Gamma(2q)} \left( \frac{e^{-t}}{1-e^{-t}} - \frac 1t \right) 
\end{equation}
with $a_{0,q}=b$, and for $w\in \C\setminus [1,\infty)$,
\begin{equation}\label{fi-power-0}
\varphi(t) = \frac{we^{-t}}{1-we^{-t}} \, \sum_{n=0}^{\infty}\, \frac{(-1)^{n} a_{n,q}\, t^{n}}{\Gamma(n+2q)}\, ;
\end{equation}
in the odd case, for all $w\in \C \setminus (1,\infty)$,  the constant $b$ in \eqref{eq-transf-f} vanishes and the function $\varphi$ can be written as in \eqref{fi-power-0}.

\noindent
Finally, the invariance under the involution $\II_{q}$ defined in \eqref{invo}, implies that if $b=0$
\begin{equation}\label{invo-fi}
\BBB_{q}[\varphi] = \pm\,  \LLL[\chi_{2q-1}\varphi]
\end{equation}
where again the signs ``+'' and ``-'' correspond to the case of even or odd gpf respectively.
\end{proposition}

\subsection{The even case for $0$-gpf} \label{sec:even-good}

In \cite{Le} and \cite{LeZa} it is proved that the set of even period functions, that is even 0-gpf with $w=1$, is in one-to-one correspondence with the set of even Maass cusp forms. This correspondence is proved using the Fourier series expansions of the even cusp forms given by
\begin{equation}\label{four-exp}
u(x+i y) = y^{\frac 12}\, \sum_{n\ge 1}\, c_{n,q}\, K_{q-\frac 12}(2\pi ny)\, \cos(2\pi nx)
\end{equation}
where the coefficients $c_{n,q}$ have at most polynomial growth. In particular the correspondence is given in \cite{Le} in terms of the Laplace and Hankel transforms as
\begin{equation}\label{formula-1}
\psi(z) = \LLL\left[ \chi_q \HHH_{q-\frac 12} [u(i y)] \right] (z)\, .
\end{equation}
Since the gpf $f(z)$ of Proposition \ref{le-fi} coincides with $\psi(z)$ up to a multiplication constant, using \eqref{eq-transf-f} with $b=0$ and \eqref{invo-fi}, we obtain
$$ 
\LLL\left[ \chi_q \HHH_{q-\frac 12} [u(i y)] \right] (z) \doteq \LLL[\chi_{2q-1}\varphi](z)
$$
from which we obtain an integral correspondence between cusp forms and the eigenfunctions $\varphi$ of Proposition \ref{le-fi}, namely
\begin{equation}\label{u-to-fi}
\varphi(t) \doteq t^{1-q}\, \HHH_{q-\frac 12} [u(i y)](t)
\end{equation}
see \cite[equation (2.27)]{LeZa}. 

\begin{rem} \label{notazione-cost}
We have used the notation $\doteq$ here and in the following to denote an equality up to a multiplicative constant between cusp forms and the eigenfunctions $\varphi$. However, once this constant has been fixed it remains the same in all the equations where $\doteq$ appears. The known constants have been written explicitly. In particular, if one chooses the right constant so that \eqref{u-to-fi} is an equality, then all the other equations where $\doteq$ appears become equalities by using the same constant.
\end{rem}

\noindent
We would like to use the involution property of the Hankel transform to introduce the inverse relation of \eqref{u-to-fi}. Unfortunately we are outside the standard functional spaces where the involution property is valid, since for example an eigenfunction $\varphi$ of Proposition \ref{le-fi} satisfies $\varphi (t) = O(e^{\eps t})$ as $t\to \infty$ for all $\eps>0$. Hence we first explicitly construct the Hankel transform of the term $\chi_{q-1} \varphi$.

\begin{definition} \label{def-0-gmf-beta}
For any $q$ with $\Re(q)>0$ and $w\in \C \setminus (1,\infty)$, define the one-parameter family of functions
\begin{equation}\label{fi-to-u-beta}
u_\beta(iy) := \HHH_{q-\frac 12} [\exp_{-\beta}\, \chi_{q-1}\, \varphi](y)\, , \qquad \Re(\beta)>0
\end{equation}
for functions $\varphi:(0,+\infty) \to \C$ which make the integral converge.
\end{definition}

\noindent
Thanks to the properties of the Bessel function recalled in Section \ref{sec:sfit}, the integral in \eqref{fi-to-u-beta} is absolutely convergent if $\varphi$ is as in Proposition \ref{le-fi}, that is $\varphi$ is in $L^2(m_q)$, satisfies $(M+N_q)\varphi = \frac 1w \varphi$ and can be written as in \eqref{fi-power-b} with $a_{0,q}=b=0$ for $w=1$, and as in \eqref{fi-power-0} for $w\in \C \setminus [1,\infty)$. In fact by definition $\varphi$ satisfies $\varphi(t)=O(1)$ as $t\to 0^+$ and $\varphi (t) = O(e^{\eps t})$ as $t\to \infty$ for all $\eps>0$.

\begin{theorem}\label{ext-to-0}
For any $q$ with $\Re(q)>0$ and any $w\in \C \setminus (1,\infty)$,  and for $\varphi$ as in Proposition \ref{le-fi} with $b=0$, the function $u_\beta(iy)$ can be extended for all $y>0$ as an analytic function of $\beta$ to a small domain containing the origin. Moreover $u_0(iy)$ satisfies
\begin{equation}\label{la-u}
u_0(iy) = w \left[ g(y) + g\left( \frac 1y \right) \right] \,  ,\qquad \forall\, y>0
\end{equation}
where
$$g(y)= \HHH_{q-\frac 12}[\exp_{-1}\, \chi_{q-1}\, \varphi] (y) = \sum_{n=0}^{\infty} (-1)^{n}\, a_{n,q}\, \frac{y^{n+q}}{(1+y^{2})^{\frac n2 + \frac q2 + \frac 14}}\, \PPP^{-q+\frac 12}_{n+q-\frac 12} \left( \frac{y}{(1+y^{2})^{\frac 12}} \right)\, ,
$$
and $\{a_{n,q}\}$ is given in \eqref{fi-power-b} with $a_{0,q}=0$ for $w=1$, and in \eqref{fi-power-0} for $w\in \C \setminus [1,\infty)$.
\end{theorem}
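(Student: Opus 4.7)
The plan is to exploit the eigenvalue equation $(M+N_{q})\varphi = \varphi/w$ of Proposition~\ref{le-fi} to rewrite the integrand defining $u_{\beta}(iy)$ so that both the analytic extension to $\beta=0$ and the identity $u(iy) = w[g(y)+g(1/y)]$ emerge together from a single geometric-series expansion. Writing $\varphi(t) = \frac{we^{-t}}{1-we^{-t}}h(t)$ with $h(t) = \sum_{n\ge 0}\frac{(-1)^{n}a_{n,q}}{\Gamma(n+2q)}t^{n}$, the eigenvalue equation immediately yields the key identity $e^{-t}h(t) = N_{q}\varphi(t)$, whence $\varphi = w\,N_{q}\varphi/(1-we^{-t})$.

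Substituting and expanding the geometric factor gives
\[
u_{\beta}(iy) \;=\; w\sum_{k=0}^{\infty} w^{k}\int_{0}^{\infty} J_{q-\frac12}(ty)\sqrt{ty}\,t^{q-1}\,e^{-(k+\beta)t}\,N_{q}\varphi(t)\,dt,
\]
in which, thanks to $N_{q}\varphi = e^{-t}h$ and the order-one growth of $h$, every integrand is exponentially dominated uniformly in a neighborhood of $\beta=0$; each term is therefore analytic in $\beta$ at the origin and the series supplies the required analytic extension. Setting $\beta=0$ and re-inserting $N_{q}\varphi=e^{-t}h$ in the $k\ge 1$ terms collapses them to precisely the integral defining $g(y)$, so the tail equals $wg(y)$. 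The theorem thus reduces to the identity $\HHH_{q-\frac12}[\chi_{q-1}N_{q}\varphi](y) = g(1/y)$. Unfolding $N_{q}$ as an integral against $J_{2q-1}(2\sqrt{st})$, applying Fubini, and changing variables $v=ty$ converts this into the Bessel integral identity
\[
\int_{0}^{\infty} J_{q-\frac12}(v)\,J_{2q-1}\bigl(2\sqrt{av}\bigr)\,dv \;=\; J_{q-\frac12}(a),\qquad a>0,
\]
which I would prove by expanding $J_{2q-1}(2\sqrt{av})$ in its Taylor series, evaluating the Mellin-type integrals $\int_{0}^{\infty} v^{m+q-\frac12}J_{q-\frac12}(v)\,dv$ via the Weber--Schafheitlin formula, and resumming through the Legendre duplication formula to recognize the Taylor series of $J_{q-\frac12}(a)$.

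The Legendre series for $g(y)$ then follows as a corollary. The same chain of identifications gives $g(1/y) = \sum_{n}c_{n}I_{n}(y,1)$ with $c_{n}=\frac{(-1)^{n}a_{n,q}}{\Gamma(n+2q)}$ and $I_{n}(y,s) = \int_{0}^{\infty} J_{q-\frac12}(ty)\sqrt{ty}\,t^{n+q-1}e^{-st}\,dt$; replacing $y$ by $1/y$ and computing $I_{n}(1/y,1)$ from the tabulated Laplace--Bessel integral (Gradshteyn--Ryzhik 6.621) produces $\Gamma(n+2q)$ times a hypergeometric $\hyper{2}{1}\bigl(\tfrac{n}{2}+q,-\tfrac{n}{2};q+\tfrac12;\tfrac{1}{1+y^{2}}\bigr)$, which the quadratic transformation together with the standard Legendre--hypergeometric relation (DLMF~14.3.1) converts into $\PPP^{-q+\frac12}_{n+q-\frac12}(y/\sqrt{1+y^{2}})$ with exactly the stated normalization. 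The main obstacle is the Bessel identity in the central display above, together with the Fubini step preceding it: the former requires a careful power-series/Mellin derivation, while the latter relies on the $L^{2}(m_{q})$ hypothesis on $\varphi$ combined with the uniform bound $\sqrt{ty}\,|J_{q-\frac12}(ty)|\le C$; every other step is either a routine consequence of the eigenvalue equation or a lookup in classical integral tables.
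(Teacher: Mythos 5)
Your proposal follows the same skeleton as the paper's proof: split $u_\beta$ via the eigenvalue equation $(M+N_q)\varphi=\frac1w\varphi$ into a piece built from $M\varphi$ (yielding $g(y)$) and a piece built from $N_q\varphi$ (yielding $g(1/y)$), and reduce the latter to a Bessel reciprocity; indeed your identity $\int_0^\infty J_{q-\frac12}(v)J_{2q-1}(2\sqrt{av})\,dv=J_{q-\frac12}(a)$ is exactly the $\beta=0$ specialization of the tabulated formula (Erd\'elyi, Tables, vol.~II, 8.12(17)) that the paper invokes. The genuine gap is that you perform the interchanges directly at $\beta=0$, where they are not legitimate --- and this is precisely the difficulty the regularization parameter $\beta$ exists to handle. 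Concretely: (i) the double integral behind your Fubini step, $\int_0^\infty\!\int_0^\infty J_{q-\frac12}(ty)\sqrt{ty}\,t^{q-1}J_{2q-1}(2\sqrt{st})\left(\frac st\right)^{q-\frac12}e^{-s}\varphi(s)\,ds\,dt$, is \emph{not} absolutely convergent: using $|J_{q-\frac12}(ty)\sqrt{ty}|=O(1)$ and $|J_{2q-1}(2\sqrt{st})|=O((st)^{-\frac14})$, the modulus of the $t$-integrand decays only like $t^{-3/4}$ at infinity. The $L^2(m_q)$ hypothesis and the uniform bound on $\sqrt{ty}\,J_{q-\frac12}(ty)$ cannot repair a divergence living entirely in the $t$-variable, so Fubini does not apply. (ii) Your proposed proof of the Bessel identity by expanding $J_{2q-1}(2\sqrt{av})$ in its Taylor series and integrating term by term fails outright: $\int_0^\infty v^{m+q-\frac12}J_{q-\frac12}(v)\,dv$ diverges for every $m\ge1$, since the Mellin transform of $J_\nu$ converges only for $\Re(\rho)<\frac32$; each term you want to resum is itself a divergent integral. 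The paper sidesteps both problems by keeping $\Re(\beta)>0$, where the factor $e^{-\beta t}$ makes everything absolutely convergent and the inner $t$-integral evaluates in closed form to $\frac{1}{\sqrt{y^2+\beta^2}}\,e^{-s\beta/(y^2+\beta^2)}\,J_{q-\frac12}\bigl(\frac{sy}{y^2+\beta^2}\bigr)$, and only then continues analytically to $\beta=0$.

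Two further points. Your geometric expansion $\frac{1}{1-we^{-t}}=\sum_k w^ke^{-kt}$ diverges on $0<t<\log|w|$ whenever $|w|>1$, so the argument as written does not cover all $w\in\C\setminus(1,\infty)$; it is also an unnecessary detour, since summing the $k\ge1$ terms merely reconstitutes $\HHH_{q-\frac12}[\exp_{-\beta}\,\chi_{q-1}\,M\varphi]$, which the eigenvalue equation hands you directly. Finally, the closing identification of the Legendre series (Gradshteyn--Ryzhik 6.621 plus the quadratic transformation) is correct and equivalent to the paper's table lookup, but the term-by-term integration against the power series of $\varphi$ is itself only justified for $\Re(\beta)>0$ (the relevant ratio estimate gives $\limsup(\cdot)^{1/n}\le\frac{1}{\Re(1+\beta)}$, which is $<1$ only there), so this step too must be closed by the analytic continuation in $\beta$ rather than asserted at $\beta=0$.
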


\begin{proof}
Let us fix $y>0$. Using the functional equation $(M+N_q)\varphi = \frac 1w \varphi$, we can write
\begin{equation} \label{est-u-beta}
u_\beta(iy) = w\, \HHH_{q-\frac 12}[ \exp_{-\beta}\, \chi_{q-1} M\varphi](y) + w\, \HHH_{q-\frac 12}[ \exp_{-\beta}\, \chi_{q-1} N_q\varphi](y)
\end{equation}
since the first integral on the right hand side is absolutely convergent. Moreover we can change the order of integration in the second integral, that is
$$
\HHH_{q-\frac 12}[ \exp_{-\beta}\, \chi_{q-1} N_q\varphi](y) = \int_0^\infty\, J_{q-\frac 12}(ty) \sqrt{ty}\, e^{-\beta t} t^{q-1} \, \int_0^\infty \, J_{2q-1}(2\sqrt{st}) \left( \frac st \right)^{q-\frac 12}\, e^{-s} \varphi(s) \, ds \ dt =
$$
$$
= \int_0^\infty\, e^{-s} \, s^{q-1} \sqrt{s} \, \varphi(s) \, \int_0^\infty\, J_{q-\frac 12}(ty) \sqrt{ty}\, J_{2q-1}(2\sqrt{st})\, e^{-\beta t}\, t^{-\frac 12} dt \ ds
$$
since again the two-variable integral is absolutely convergent under the assumption $\Re(\beta)>0$. Hence, applying \cite[vol. II, eq. 8.12.(17), p. 58]{E2}, we get
$$
\HHH_{q-\frac 12}[ \exp_{-\beta}\, \chi_{q-1} N_q\varphi](y) = \int_0^\infty\, J_{q-\frac 12}\left( \frac{sy}{y^2+\beta^2} \right) \sqrt{sy}\ e^{-s-\frac{s\beta}{y^2+\beta^2}}\, \frac{s^{q-1}}{\sqrt{y^2+\beta^2}}\, \varphi(s)\ ds\, .
$$
The integral on the right hand side is absolutely convergent if
$$\Re\left( 1 + \frac{\beta}{y^2+\beta^2}\right) >0$$
hence the left hand side can be extended as an analytic function of $\beta$ to a small domain containing the origin. In particular we find that
\begin{equation} \label{est-2-to-0}
\HHH_{q-\frac 12}[ \exp_{-\beta}\, \chi_{q-1} N_q\varphi]\Big|_{\beta=0}(y) = \HHH_{q-\frac 12}[\exp_{-1}\, \chi_{q-1}\, \varphi] \left( \frac 1y \right)\, .
\end{equation}
Coming back to \eqref{est-u-beta}, the first term
$$
\HHH_{q-\frac 12}[ \exp_{-\beta}\, \chi_{q-1} M\varphi](y) =  \int_0^\infty\, J_{q-\frac 12}(ty) \sqrt{ty}\, e^{-\beta t} t^{q-1} \, e^{-t}\, \varphi(t) \, dt
$$
is absolutely convergent for $\Re(\beta)>-1$, hence again can be extended as an analytic function of $\beta$ to $\Re(\beta)>-1$, satisfying
\begin{equation} \label{est-1-to-0}
\HHH_{q-\frac 12}[ \exp_{-\beta}\, \chi_{q-1} M\varphi]\Big|_{\beta=0}(y) = \HHH_{q-\frac 12}[\exp_{-1}\, \chi_{q-1}\, \varphi] (y)\, .
\end{equation}
Hence, putting together \eqref{est-1-to-0} and \eqref{est-2-to-0}, we have proved that $u_\beta(iy)$ can be extended, as an analytic function of $\beta$, to a small domain containing the origin for all $y>0$, and
\begin{equation}\label{est-u-to-0}
u_0(iy) = w\, \HHH_{q-\frac 12}[\exp_{-1}\, \chi_{q-1}\, \varphi] (y) + w\, \HHH_{q-\frac 12}[\exp_{-1}\, \chi_{q-1}\, \varphi] \left( \frac 1y \right)\, .
\end{equation}
This establishes \eqref{la-u} with $g = \HHH_{q-\frac 12}[\exp_{-1}\, \chi_{q-1}\, \varphi]$.
We now use the power series expansion for $\varphi$ to obtain the series representations for $g$. 

\noindent
First we write $g(y) = G(y,\beta)\Big|_{\beta=0}$
where
$$
G(y,\beta) := \HHH_{q-\frac 12}[\exp_{-(1+\beta)}\, \chi_{q-1}\, \varphi] (y)
$$
for $y>0$ and $\Re(\beta)>-1$, the integral on the right hand side being absolutely convergent by the estimates used to justify the convergence in \eqref{fi-to-u-beta}. Then we use the identity
$$
\frac{w\, e^{-t}}{1-w\, e^{-t}} = \frac{1}{1-w\, e^{-t}} - 1
$$
in the definition of $G(y,\beta)$ to obtain
$$
G(y,\beta) = \int_{0}^{\infty}\, J_{q-\frac 12}(ty) \sqrt{y} \, e^{-(1+\beta)t}\, \frac{w\, e^{-t}}{1-w\, e^{-t}}\, \sum_{n=0}^{\infty} \frac{(-1)^{n}\, a_{n,q}\, t^{n+q-\frac 12}}{\Gamma(n+2q)}\, \ dt = 
$$
$$
= \frac 1w \int_{0}^{\infty}\, J_{q-\frac 12}(ty) \sqrt{y} \, \frac{w\, e^{-(1+\beta) t}}{1-w\, e^{-t}}\, \sum_{n=0}^{\infty} \frac{(-1)^{n}\, a_{n,q}\, t^{n+q-\frac 12}}{\Gamma(n+2q)}\ dt - \int_{0}^{\infty}\, J_{q-\frac 12}(ty) \sqrt{y} \, e^{-(1+\beta) t} \sum_{n=0}^{\infty} \frac{(-1)^{n}\, a_{n,q}\, t^{n+q-\frac 12}}{\Gamma(n+2q)}\ dt =
$$
$$
= \frac 1w\, u_{\beta}(iy) - \int_{0}^{\infty}\, J_{q-\frac 12}(ty) \sqrt{y} \, e^{-(1+\beta) t} \sum_{n=0}^{\infty} \frac{(-1)^{n}\, a_{n,q}\, t^{n+q-\frac 12}}{\Gamma(n+2q)}\ dt\, .
$$
Using \cite[vol II, p. 14]{E}, it holds
\[
|J_{q-\frac 12}(ty)| \le \frac{\sqrt{\pi}}{|\Gamma(q)|}\, \left( \frac 12 ty\right)^{\Re(q-\frac 12)}\, , \quad \forall\, t,y>0
\]
hence
$$
\sup_{t\in \R^{+}}\, \left| J_{q-\frac 12}(ty) \, e^{-(1+\beta)t}\,  t^{n+q-\frac 12} \right| \le \text{const.$(w,q,y)$}\, \sup_{t\in \R^{+}}\, \left| e^{-(1+\beta)t}\, t^{n+2q-1} \right| 
$$
where const.$(w,q,y)$ denotes a constant only depending on $w, q$ and $y$. Since
$$
\sup_{t\in \R^{+}}\, \left| e^{-(1+\beta)t}\, t^{n+2q-1} \right| \le e^{-n-2\Re(q)+1}\, \left| \frac{n+2q-1}{\Re(1+\beta)} \right|^{n+2\Re(q)-1}
$$
we find 
$$
\limsup_{n\to \infty}\, \left( \sup_{t\in \R^{+}}\, \left| \frac{a_{n,q}}{\Gamma(n+2q)}\, e^{-(1+\beta)t}\, t^{n+2q-1} \right| \right)^{\frac 1n} \le \frac{1}{\Re(1+\beta)} <1\, , \qquad \text{for $\Re(\beta)>0$}
$$
where we also used that $\limsup_{n} |a_{n,q}|^{\frac 1n} \le 1$. Hence we can write 
$$
G(y,\beta) = \frac 1w\, u_{\beta}(iy) - \sum_{n=0}^{\infty} \frac{(-1)^{n}\, a_{n,q}}{\Gamma(n+2q)}\ \HHH_{q-\frac 12} \left[ \exp_{-(1+\beta)}\, \chi_{n+q-1}\right](y) \, , \qquad \text{for $\Re(\beta)>0$}\, .
$$
Using now the proved analytic extension for $u_{\beta}$, we can write for the second term on the right hand side
\begin{equation} \label{ale} 
\sum_{n=0}^{\infty} \frac{(-1)^{n}\, a_{n,q}}{\Gamma(n+2q)}\ \HHH_{q-\frac 12} \left[ \exp_{-1}\, \chi_{n+q-1}\right](y) = \frac 1w\, u(iy) - G(y,0) = g\left( \frac 1y \right)\, .
\end{equation}
Moreover, using \cite[vol. II, eq. 8.6.(6), p. 29]{E2}, we obtain
$$
g\left( \frac 1y \right) = \sum_{n=0}^{\infty}\, (-1)^{n}\, a_{n,q}\, \frac{y^{\frac 12}}{(1+y^{2})^{\frac n2 + \frac q2 + \frac 14}}\, \PPP^{-q+\frac 12}_{n+q-\frac 12} \left( \frac{1}{(1+y^{2})^{\frac 12}} \right)
$$
and the proof is complete.
\end{proof}

\vskip 0.1cm
\noindent
We have thus proved the validity of the following expansion for $y\in (0,\infty)$
\begin{equation}\label{legendre-exp}
u_0(iy) = w\, \sum_{n=0}^{\infty}\, (-1)^{n}\, a_{n,q}\, \frac{y^{\frac 12}\, \PPP^{-q+\frac 12}_{n+q-\frac 12} \left( \frac{1}{(1+y^{2})^{\frac 12}} \right) + y^{n+q}\, \PPP^{-q+\frac 12}_{n+q-\frac 12} \left( \frac{y}{(1+y^{2})^{\frac 12}} \right)}{(1+y^{2})^{\frac n2 + \frac q2 + \frac 14}}\, .
\end{equation}
Moreover, letting $y=\tan \vartheta$ with $\vartheta \in (0,\frac \pi 2)$ in \eqref{legendre-exp}, we get 
\begin{equation}\label{legendre-exp-theta}
u_0(iy) = w\, (\sin \vartheta \, \cos \vartheta)^{\frac 12}\, \sum_{n=0}^{\infty}\, (-1)^{n}\, a_{n,q}\, \Big[ (\cos \vartheta)^{n+q-\frac 12}\, \PPP^{-q+\frac 12}_{n+q-\frac 12} \left( \cos \vartheta \right) + (\sin \vartheta)^{n+q-\frac 12}\, \PPP^{-q+\frac 12}_{n+q-\frac 12} \left( \sin \vartheta \right) \Big]\, ,
\end{equation}
and from the integral representation valid for $\xi>0$ (see \cite[vol. I, eq. (27), p. 159]{E})
$$
\PPP^{-q+\frac 12}_{n+q-\frac 12} \left( \cos \vartheta \right) = \frac{\sqrt{2}\, (\sin \vartheta)^{-q+\frac 12}}{\sqrt{\pi}\, \Gamma(q)}\, \int_{0}^{\vartheta}\, \frac{\cos((n+q)t)}{(\cos t - \cos \vartheta)^{1-q}}\ dt
$$
and the similar one for $\PPP^{-q+\frac 12}_{n+q-\frac 12} \left( \sin \vartheta \right)$, we see that the convergence in \eqref{legendre-exp-theta} is uniform on any compact interval contained in $(0,\frac \pi 2)$. Hence the convergence in \eqref{legendre-exp} is uniform on any compact interval contained in $(0,\infty)$.

\begin{corollary}\label{cor-maass-even}
Letting $w=1$ and $\{a_{n,q}\}$ as in \eqref{fi-power-b} with $a_{0,q}=0$, the function $u_0(iy)$ in \eqref{legendre-exp} is the restriction to the imaginary axis of an even Maass cusp form.
\end{corollary}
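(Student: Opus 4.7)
The goal is to identify $u(iy)$ from Theorem \ref{ext-to-0} with the imaginary-axis restriction of the Maass cusp form supplied by the Lewis--Zagier correspondence, using the involution property of the symmetric Hankel transform $\HHH_{q-\frac 12}$.

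Under the assumption $w = 1$ with $a_{0,q} = 0$, Proposition \ref{le-fi} places $\varphi$ in the class of defining functions of even 0-gpf. By Theorem D-(a), first bullet, $q$ is then an even spectral parameter of $\Gamma$, so by Theorem B there exists a nonzero even Maass cusp form $u_m$ with $\Delta u_m = q(1-q)\, u_m$, whose imaginary-axis restriction admits the Fourier--Bessel expansion \eqref{four-exp}. The Lewis--Zagier integral correspondence \eqref{u-to-fi} then reads $\chi_{q-1}\varphi \doteq \HHH_{q-\frac 12}[u_m(i\,\cdot)]$.

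Applying the involution $\HHH_{q-\frac 12}^{2} = \mathrm{id}$ (valid on the appropriate $L^{2}$-space for $\xi > 0$) to both sides gives
$$
\HHH_{q-\frac 12}[\chi_{q-1}\varphi](y) \doteq u_m(iy), \qquad y > 0.
$$
On the other hand, by Theorem \ref{ext-to-0} the function $u(iy)$ is the analytic continuation at $\beta = 0$ of the absolutely convergent family $u_\beta(iy) = \HHH_{q-\frac 12}[\exp_{-\beta}\chi_{q-1}\varphi](y)$, with the symmetric decomposition $u(iy) = g(y) + g(1/y)$. This decomposition is compatible with the $S$-invariance $u_m(iy) = u_m(i/y)$ forced by $S\colon z\mapsto -1/z \in \Gamma$. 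Identifying the analytic continuation of $u_\beta$ at $\beta = 0$ with the formal Hankel integral $\HHH_{q-\frac 12}[\chi_{q-1}\varphi](y)$ yields $u(iy) \doteq u_m(iy)$, as claimed.

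The delicate point is exactly this last identification: the Hankel integral $\HHH_{q-\frac 12}[\chi_{q-1}\varphi](y)$ is not absolutely convergent, and one must check that the analytic continuation provided by Theorem \ref{ext-to-0} realizes the correct ``inverse Hankel transform'' of $u_m(iy)$. One route is term-by-term: substitute the series expansion \eqref{fi-power-b} (with $a_{0,q} = 0$) for $\varphi$ and expansion \eqref{four-exp} for $u_m(iy)$, then use the coefficient relations \eqref{nuova-varfi}--\eqref{relaz-coefficienti} together with the integral $\int_0^\infty J_{q-\frac 12}(ty)\sqrt{ty}\, t\,(t^2 + a^2)^{-1}\, dt \doteq (ay)^{1/2}K_{q-\frac 12}(ay)$ to match the Legendre-function coefficients in \eqref{legendre-exp} with the Fourier--Bessel coefficients of $u_m$. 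An alternative is a regularization argument that exploits the $S$-symmetric split $g(y) + g(1/y)$ to pass to the limit $\beta\to 0^{+}$ while maintaining convergence. Either route yields $u(iy) \doteq u_m(iy)$, so $u$ is the restriction of the even Maass cusp form $u_m$ to the imaginary axis.
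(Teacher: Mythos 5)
Your argument is correct and follows essentially the same route as the paper: the paper's proof likewise combines the Maass uniqueness/characterization theorem (even cusp forms are determined by their restriction to the imaginary axis together with the invariance $u_m(iy)=u_m(i/y)$), the built-in symmetry $u(iy)=g(y)+g(1/y)=u(i/y)$ from Theorem \ref{ext-to-0}, and the chain \eqref{formula-1}, \eqref{u-to-fi}, Proposition \ref{le-fi} tying $\varphi$ to $u_m$ via the Hankel involution. The ``delicate point'' you flag is exactly what the $\beta$-regularization of Theorem \ref{ext-to-0} is designed to handle, so your second proposed route is the one the paper takes.
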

 \begin{proof}
 It follows from the fundamental theorem of Maass (see \cite[Theorem 2, p. 234]{Ter} and \cite[Proposition 2.1]{Le}) that even Maass cusp forms are uniquely determined as functions with restriction on the imaginary axis of the form \eqref{four-exp} for $x=0$ and coefficients $\{c_{n,q}\}$ which make the series \eqref{four-exp} satisfy $u(iy)= u(i\frac 1y)$.

\noindent 
By definition we have that the function $u_0(iy)$ in \eqref{la-u} corresponds to an eigenfunction $\varphi$ of the operator $M+N_q$ as explained in Proposition \ref{le-fi}, and by \eqref{u-to-fi} and the involution property of the Hankel transform, it admits a Fourier expansion as in \eqref{four-exp}. Moreover by the properties of $u_0(iy)$ found in Theorem \ref{ext-to-0} it also satisfies $u_0(iy)= u_0(i\frac 1y)$. Hence the proof is finished.
\end{proof}

\begin{rem}\label{rem-ref}
A consequence of this result is that Maass Theorem can be reformulated by saying that even Maass cusp forms are uniquely determined as functions with restriction on the imaginary axis of the form \eqref{legendre-exp} and coefficients $\{a_{n,q}\}$ which satify $\limsup_{n} |a_{n,q}|^{\frac 1n} \le 1$ and the identity
\begin{equation} \label{iden}
\sum_{n=1}^\infty\, a_{n,q}\, z^n = \sum_{n=1}^\infty\, a_{n,q}\, \Big( (z-1)^n - \frac{(-1)^n}{(z+1)^{2q+n}}\Big)
\end{equation}
for all $z$ where the series converge. This identity follows from Proposition \ref{le-fi} and \cite{BI}.

\noindent
As far as numerical computations are concerned, we also remark that \eqref{iden} has been reduced in \cite{imen,BGI} to a linear algebra identity for infinite matrices in the case $q$ real. The same can be done for general complex values of $q$ with positive real part (unpublished notes). 
\end{rem}

\subsection{The even case for $b$-gpf} \label{even-b-good}

We now extend Theorem \ref{ext-to-0} to the case of even $b$-gpf, which do exist only for $w=1$. We recall that non-cuspidal Maass forms of eigenvalue $\lambda$ form a one-dimensional subspace which is spanned by the non-holomorphic Eisenstein series defined for $\xi>1$ as
\begin{equation}\label{eisen-q}
E(z,q) = \zeta(2q)\, y^{q} \left(1+ \frac{1}{|z|^{2q}} \right) + 2\, \sum_{c,d\ge 1} \left( \frac{y}{|cz+d|^{2}} \right)^q\, , \qquad z=x+i y\, , 
\end{equation}
and extended to $\C$ as a meromorphic function with a simple pole at $q=1$ with residue the constant function $\frac \pi 2$, by the Fourier series expansions
\begin{equation}\label{eisen}
E(x+i y,q) = \zeta(2q)\, y^{q} + \frac{\pi^{\frac 12}\, \Gamma(q-\frac 12)}{\Gamma(q)}\, \zeta(2q-1)\, y^{1-q} + y^{\frac 12}\, \sum_{n\ge 1}\, \tilde c_{n,q}\, K_{q-\frac 12}(2\pi ny)\, \cos(2\pi nx)
\end{equation}
where
$$
\tilde c_{n,q} = \frac{4\pi^{q}}{\Gamma(q)}\, n^{\frac 12 -q}\, \sum_{d|n}\, d^{2q-1}\, .
$$
Notice that the extension of $E(x+iy,q)$ to $\C$ has no pole at $q=\frac 12$ since the contribution from the term $\zeta(2q)$ is cancelled by the contribution of the term containing $\Gamma(q-\frac 12)$. Moreover it is proved in \cite{CM1} and \cite{LeZa} (see the proof of equation (2.30) and page 243) that the function $\psi_{q}^{+}$ defined in \eqref{zagier-psi} for $\xi>1$, which is an eigenfunction of $\PP^+_q$ with eigenvalue $\lambda=1$, satisfies
\begin{equation}\label{eis-to-psi}
\psi_{q}^{+}(z) = \frac{\zeta(2q)}{2} \left( 1 + z^{-2q} \right) + \frac{2^{-q-\frac 12}}{\Gamma(q+\frac 12)}\, \LLL \left[ \chi_{q}\, \HHH_{q-\frac 12}[\tilde E(iy,q)]\right](z)
\end{equation}
where
\begin{equation}\label{eisen-q-tilde}
\tilde E(iy,q) = 2\, \sum_{c,d\ge 1} \left( \frac{y}{c^2y^2+d^{2}} \right)^q\, .
\end{equation}
It is shown in \cite{LeZa} that the function $\frac{\Gamma(2q)}{\Gamma(q-1)}\, \psi_q^+$ can be analytically continued to $\C$, we give here a proof of this fact for $\{\xi>0\}$ using the $\BBB_q$ transform. 

\begin{theorem}\label{zagier-funct-ser}
The equation
\begin{equation}\label{for-ps-zf}
\psi_{q}^{+}(z) = \BBB_{q}\left[\frac{\zeta(2q)}{2}\, \frac{\delta_{0}(t)}{t^{2q-1}} + \frac{e^{-t}}{1-e^{-t}} \, \sum_{n=0}^{\infty}\, \frac{(-1)^{n} a_{n,q}\, t^{n}}{\Gamma(n+2q)}   \right](z)
\end{equation}
where\footnote{Here by $\delta_0$ we denote the Dirac delta function at 0, and we use its definition when it is used as argument of an integral transform.}
$$
\left\{
\begin{array}{l}
a_{0,q} = \zeta(2q-1)\\[0.2cm]
a_{n,q} = (-1)^{n}\, \frac{\Gamma(n+2q)}{n!\, \Gamma(2q)}\, \left( \frac{\zeta(2q)}{2} + \frac{1}{n+1}\, \sum_{i=0}^{n}\, {n+1\choose i}\, B_{i}\, \zeta(2q-1+i) \right)\, , \qquad n\ge 1
\end{array}
\right.
$$
defines a meromorphic extension of $\psi_{q}^{+}(z)$ to $\{\xi>0\}$ with simple pole at $q=1$ and residue the function $\frac{1}{2z}$, which is the density of the invariant measure of the Farey map, up to a multiplicative constant.
\end{theorem}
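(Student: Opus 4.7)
The plan is to verify \eqref{for-ps-zf} for $\Re(q) > 1$ by explicitly inverting the $\BBB_q$-transform starting from the series definition \eqref{zagier-psi}, and then to use the resulting power-series form in $t$ (with coefficients $a_{n,q}$) to provide the meromorphic extension to $\{\xi > 0\}$. First I will apply Hurwitz's integral representation $\zeta(s,a) = \Gamma(s)^{-1}\int_0^\infty e^{-at}t^{s-1}(1-e^{-t})^{-1}\,dt$ combined with the substitution $t \mapsto t/n$ to rewrite, for $\Re(q) > 1$,
\begin{equation*}
\sum_{m,n\geq 1}\frac{1}{(mz+n)^{2q}} = \BBB_q\!\left[\frac{1}{\Gamma(2q)}\sum_{n\geq 1}\frac{n^{-2q}}{e^{t/n}-1}\right](z).
\end{equation*}
Together with the identities $\BBB_q[\delta_0(t)/t^{2q-1}](z) = 1/z^{2q}$ and $\BBB_q[1/\Gamma(2q)](z) = 1$, this yields $\psi_q^+(z) = \BBB_q[\varphi](z)$ with
\begin{equation*}
\varphi(t) = \frac{\zeta(2q)\,\delta_0(t)}{2\,t^{2q-1}} + \frac{\zeta(2q)}{2\,\Gamma(2q)} + \frac{1}{\Gamma(2q)}\sum_{n\geq 1}\frac{n^{-2q}}{e^{t/n}-1}.
\end{equation*}

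Next I will show that the non-distributional part of $\varphi$ coincides with $\frac{e^{-t}}{1-e^{-t}}\sum_{n\geq 0}\frac{(-1)^n a_{n,q}\, t^n}{\Gamma(n+2q)}$ with the $a_{n,q}$ specified in the theorem. Multiplying both sides by $e^t-1$ and using the telescoping identity $(e^t-1)/(e^{t/n}-1) = \sum_{j=0}^{n-1}e^{jt/n}$, the equality to verify reduces to
\begin{equation*}
\frac{\zeta(2q)(e^t+1)}{2\,\Gamma(2q)} + \frac{1}{\Gamma(2q)}\sum_{n\geq 2}n^{-2q}\sum_{j=1}^{n-1}e^{jt/n} = \sum_{n\geq 0}\frac{(-1)^n a_{n,q}\, t^n}{\Gamma(n+2q)}.
\end{equation*}
Expanding the exponentials in Taylor series and invoking Faulhaber's formula $\sum_{j=0}^{n-1}j^m = \frac{1}{m+1}\sum_{i=0}^m\binom{m+1}{i}B_i\, n^{m+1-i}$ for $m\geq 1$ turns the inner sums into $\sum_{n\geq 2}n^{-(2q+i-1)} = \zeta(2q-1+i) - 1$. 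Matching the $t^m$-coefficients on both sides then produces the $a_{n,q}$: the $t^0$ coefficient directly yields $a_{0,q}=\zeta(2q-1)$, the $t^1$ coefficient yields $a_{1,q}=-q\,\zeta(2q-1)$, and for $m\geq 2$ the $t^m$ coefficient, after a routine simplification using $B_0=1$ and $B_1=-1/2$ to pull out the $\zeta(2q-1)$ and $(m-1)/2$ terms, produces the general closed form stated in the theorem.

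For the meromorphic extension to $\{\xi > 0\}$, the only pole in this region comes from the factor $\zeta(2q-1)$ in $a_{n,q}$, which has a simple pole at $q=1$ with residue $1/2$; all other zeta values $\zeta(2q-1+i)$ for $i\geq 2$ are holomorphic there. Using the asymptotics $|B_{2k}|\sim 2(2k)!/(2\pi)^{2k}$ and $|\zeta(2q-1+i)-1| = O(2^{-i})$, a routine estimate shows that $\sum_{n\geq 0}(-1)^n a_{n,q}\, t^n/\Gamma(n+2q)$ has positive radius of convergence in $t$ and depends meromorphically on $q$ in $\{\xi > 0\}$; combined with the exponential decay of $e^{-t}/(1-e^{-t})$ at infinity, this makes $\BBB_q[\varphi]$ well-defined and meromorphic in $q$ on $\{\xi > 0\}$ with at most a simple pole at $q=1$. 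For the residue, $\mathrm{Res}_{q=1}a_{n,q}=(-1)^n/2$ gives
\begin{equation*}
\mathrm{Res}_{q=1}\sum_{n\geq 0}\frac{(-1)^n a_{n,q}\, t^n}{\Gamma(n+2q)} = \sum_{n\geq 0}\frac{t^n}{2\,(n+1)!} = \frac{e^t-1}{2t},
\end{equation*}
whence $\mathrm{Res}_{q=1}\varphi(t) = \frac{e^{-t}}{1-e^{-t}}\cdot\frac{e^t-1}{2t} = \frac{1}{2t}$ (the $\delta_0$ piece contributes nothing since $\zeta(2q)$ is regular at $q=1$). Finally $\BBB_1[1/(2t)](z) = \frac{1}{2z^2}\int_0^\infty e^{-t/z}\,dt = \frac{1}{2z}$, matching the stated residue.

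The hard part will be the careful bookkeeping of the $t^m$-coefficient matching in step 2, together with the justification of the series manipulations (interchanges of infinite sums and integrals, convergence of the Bernoulli expansions, and analytic continuation in $t$ beyond the radius of convergence of the power series for $\varphi$) needed both for the identity at $\Re(q) > 1$ and for the meromorphic extension throughout $\{\xi > 0\}$.
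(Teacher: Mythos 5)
Your route is essentially the paper's: represent the double sum as $\BBB_q$ of $\frac{1}{\Gamma(2q)}\sum_{n}n^{-2q}e^{-t/n}(1-e^{-t/n})^{-1}$, clear denominators with the telescoping identity, apply Faulhaber's formula, match Taylor coefficients, and get the residue at $q=1$ from $\mathrm{Res}_{q=1}\zeta(2q-1)=\frac 12$. The identity for $\Re(q)>1$ and the residue computation are correct. But there are two genuine gaps, and they are exactly where the paper has to work. The first is the convergence of the $t$-series for all $t>0$. Writing the series part as $\frac{e^{-t}}{1-e^{-t}}\sum_k A_{k,q}\frac{t^k}{k!}$ with $A_{k,q}=\frac{1}{k+1}\sum_{i}\binom{k+1}{i}B_i\bigl(\zeta(2q-1+i)-1\bigr)$, your proposed term-by-term bound via $|B_{2j}|\sim 2(2j)!/(2\pi)^{2j}$ and $|\zeta(2q-1+i)-1|=O(2^{-i})$ gives only $|A_{k,q}|\lesssim k!/(4\pi)^k$ (the single term $i=k$ is already of that size), hence a finite radius of convergence of order $4\pi$ for $\sum_k A_{k,q}t^k/k!$. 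That is not enough: $\BBB_q$ integrates over all of $(0,\infty)$, so $\varphi$ must be defined, with controlled growth, on the whole half-line. The cancellations among the Bernoulli terms are essential, and the paper captures them by the elementary estimate $|S_k(n)-\tfrac{1}{k+1}n^{k+1}-\tfrac 12 n^k|\le k^2 n^{k-1}$ (equation \eqref{vero}, proved in Appendix \ref{app1}), which yields $|A_{k,q}|=O(k^2)$ uniformly for $\xi>0$, makes the power series entire in $t$ of exponential type one, and justifies \eqref{tre} for all $\xi>0$. You flag ``analytic continuation in $t$ beyond the radius of convergence'' as the hard part but offer no mechanism for it; without an estimate of this kind the extension to $\{\xi>0\}$ does not go through.

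The second gap concerns the pole at $q=\tfrac 12$. The theorem asserts a simple pole only at $q=1$, yet your expression for $\varphi$ contains $\frac{\zeta(2q)}{2}\,\delta_0(t)\,t^{1-2q}$, whose transform $\frac{\zeta(2q)}{2}z^{-2q}$ has a simple pole at $q=\tfrac 12$; moreover the series part behaves like $\frac{\zeta(2q-1)}{\Gamma(2q)}\,\frac 1t$ as $t\to 0^+$, and applying $\BBB_q$ to $\chi_{-1}$ produces $\frac{\Gamma(2q-1)}{\Gamma(2q)}\,\frac 1z$, which also has a simple pole at $q=\tfrac 12$. One must check that the two residues cancel ($\tfrac 14 z^{-1}$ against $\zeta(0)\cdot\tfrac 12\cdot z^{-1}=-\tfrac 14 z^{-1}$), exactly as happens for the Fourier expansion of the Eisenstein series; the paper does this by identifying $c=\frac{\zeta(2q)}{2}$ and $b=\zeta(2q-1)$ in the normal form \eqref{forma-eig-p}. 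Your claim that ``the only pole in this region comes from the factor $\zeta(2q-1)$'' overlooks this, so as written your argument does not establish the stated meromorphy on $\{\xi>0\}$.
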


\begin{proof}
We first show that expression \eqref{for-ps-zf} coincides with the definition \eqref{zagier-psi} of the function $\psi_q^+$ for $\xi >1$. Then we show the meromorphic extension of \eqref{for-ps-zf} to the half-plane $\{\xi >0\}$.

\noindent
We first use \cite[Remark 2.6]{BI} and in particular
\begin{equation}\label{due}
\BBB_{q}\left[ \frac{\zeta(2q)}{2}\, \frac{\delta_{0}(t)}{t^{2q-1}} \right](z) = \frac{\zeta(2q)}{2} z^{-2q}
\end{equation}
to obtain the second term on the right hand side of \eqref{zagier-psi}. The first term is obtained by 
\begin{equation}\label{uno}
\frac{\zeta(2q)}{2} = \BBB_{q}\left[ \frac{\zeta(2q)}{2\, \Gamma(2q)}\right](z) = \BBB_{q}\left[ \frac{\zeta(2q)}{2\, \Gamma(2q)}\, \frac{e^{-t}}{1-e^{-t}} \, \sum_{n=1}^{\infty}\, \frac{t^{n}}{n!} \right] (z)\, .
\end{equation}
For the other terms we argue as follows
$$
\sum_{m,n\ge 1} \, \frac{1}{(mz+n)^{2q}} = \frac{1}{z^{2q}}\,
\sum_{m,n\ge 1} \, \frac{1}{n^{2q} \left( \frac m n + \frac 1 z
\right)^{2q}} = \frac{1}{\Gamma(2q)\, z^{2q}}\, \sum_{m,n\ge 1} \,
\frac{1}{n^{2q}}\, \LLL \left[ t^{2q-1} e^{-\frac m n\, t}
\right] \left(\frac 1 z \right) =
$$
$$
= \frac{1}{\Gamma(2q)}\, \sum_{m,n\ge 1} \, \frac{1}{n^{2q}}\,
\BBB_q \left[ e^{-\frac m n\, t} \right](z) = \BBB_q \left[
\frac{1}{\Gamma(2q)}\, \sum_{m,n\ge 1} \, \frac{e^{-\frac m n\,
t}}{n^{2q}}\, \right](z) = \BBB_q \left[ \frac{1}{\Gamma(2q)}\,
\sum_{n\ge 1} \, \frac{1}{n^{2q}}\, \frac{e^{-\frac t
n}}{1-e^{-\frac t n}} \right](z)\, .
$$
Since $\xi>1$, we can write
$$
\sum_{n\ge 1} \, \frac{1}{n^{2q}}\, \frac{e^{-\frac t
n}}{1-e^{-\frac t n}} = \frac{e^{-t}}{1-e^{-t}}\, \sum_{n\ge 1} \, \frac{1}{n^{2q}}\, \frac{e^{t}-1}{e^{\frac tn}-1} = \frac{e^{-t}}{1-e^{-t}}\, \sum_{n\ge 1} \, \frac{1}{n^{2q}}\, \sum_{j=0}^{n-1}\, (e^{\frac tn})^{j} =
$$
$$
= \frac{e^{-t}}{1-e^{-t}}\, \left[ \sum_{n\ge 1} \, \frac{1}{n^{2q}}\, + \sum_{k\ge 0} \left( \sum_{n\ge 2} \, \sum_{j=1}^{n-1}\, \frac{j^{k}}{n^{2q+k}} \right) \frac{t^{k}}{k!} \right]= \frac{e^{-t}}{1-e^{-t}}\, \sum_{k\ge 0} A_{k,q}\, \frac{t^{k}}{k!}
$$
with
$$
A_{0,q} = \zeta(2q) + \sum_{n\ge 2}\, \frac{n-1}{n^{2q}} = \zeta(2q-1)
$$
and in general 
$$
A_{k,q} = \sum_{n\ge 2}\, \frac{S_{k}(n-1)}{n^{2q+k}}\, , \quad k\ge 1
$$
where $S_{k}(n-1) = \sum_{j=1}^{n-1}\, j^{k}$. Notice that $S_{k}(n-1) \le n^{k+1}$, thus for $\xi>1$ the sum defining $A_{k,q}$ is convergent and $|A_{k,q}|\le \zeta(2\xi-1)$ for all $k\ge 1$. Hence the series $\sum_{k\ge 0} A_{k,q}\, \frac{t^{k}}{k!}$ converges for $t\in \R$ and
\begin{equation}\label{tre}
\sum_{m,n\ge 1} \, \frac{1}{(mz+n)^{2q}} = \BBB_{q} \left[ \frac{1}{\Gamma(2q)}\, \frac{e^{-t}}{1-e^{-t}}\, \sum_{k\ge 0} A_{k,q}\, \frac{t^{k}}{k!} \right](z)\, .
\end{equation}
Moreover, we recall that
$$
S_{k}(n) =  \frac{1}{k+1}\, n^{k+1} + \frac 12 \, n^k +  \frac{1}{k+1}\, \sum_{i=2}^{k}\, {k+1 \choose i}\, B_{i}\, n^{k+1-i}
$$
where $B_{i}$ are the Bernoulli numbers. Hence for $\xi>1$ and $k\ge 1$
$$
A_{k,q} = \sum_{n\ge 2}\, \frac{S_{k}(n)-n^k}{n^{2q+k}} = \sum_{n\ge 2}\, \frac{S_{k}(n) - \frac{1}{k+1}\, n^{k+1} - \frac 12 \, n^k}{n^{2q+k}} + \frac{1}{k+1}\, \Big(\zeta(2q-1)-1\Big) - \frac 12\, \Big(\zeta(2q)-1\Big) =
$$
$$
= \frac{1}{k+1}\, \Big(\zeta(2q-1)-1\Big) - \frac 12\, \Big(\zeta(2q)-1\Big) + \frac{1}{k+1}\, \sum_{i=2}^{k}\, {k+1\choose i}\, B_{i}\, \Big(\zeta(2q-1+i)-1\Big) =
$$
\[
= \frac{1}{k+1}\, \sum_{i=0}^{k}\, {k+1\choose i}\, B_{i}\, \Big(\zeta(2q-1+i)-1\Big) = \frac{1}{k+1}\, \sum_{i=0}^{k}\, {k+1\choose i}\, B_{i}\, \zeta(2q-1+i)\, ,
\]
using the identity $\sum_{i=0}^{n}\, {n+1\choose i}\, B_{i} =0$. These expressions for $A_{k,q}$ are holomorphic in $\set{\xi>0}$ for all $k\ge 0$ except for simple poles at $q=\frac 12$ and $q=1$. Moreover, using
\begin{equation}\label{vero}
\left| S_{k}(n) - \frac{1}{k+1}\, n^{k+1} - \frac 12 \, n^k \right| \le const\, k^{2}\, n^{k-1}\, , \qquad k\ge 1
\end{equation}
which is proved in the Appendix A, we have that
$$
\left| \sum_{n\ge 2}\, \frac{S_{k}(n) - \frac{1}{k+1}\, n^{k+1} - \frac 12 \, n^k}{n^{2q+k}} \right| \le const\, k^{2}\, \sum_{n\ge 2}\, \frac{n^{k-1}}{n^{2\xi+k}} = const\, k^{2}\, \zeta(2\xi+1)
$$
for all $q$ in $\set{\xi>0}$, hence $|A_{k,q}| = O(k^{2})$ for all $q$ in $\set{\xi>0}$. This implies that \eqref{tre} is valid for $\xi>0$, and putting together \eqref{due}, \eqref{uno} and \eqref{tre}, we get for $\xi>0$
$$
\psi_{q}^{+}(z) = \BBB_{q}\left[ \frac{\zeta(2q)}{2}\, \frac{\delta_{0}(t)}{t^{2q-1}} + \frac{e^{-t}}{1-e^{-t}} \, \sum_{n=0}^{\infty}\, \frac{(-1)^{n} a_{n,q}\, t^{n}}{\Gamma(n+2q)}\right](z)
$$
where
$$
\left\{
\begin{array}{l}
a_{0,q} = \zeta(2q-1)\\[0.2cm]
a_{n,q} = (-1)^{n}\, \frac{\Gamma(n+2q)}{n!\, \Gamma(2q)}\, \left( \frac{\zeta(2q)}{2} + A_{n,q} \right)\, , \qquad n\ge 1
\end{array}
\right.
$$
which are holomorphic except for a simple pole at $q=1$.

\noindent
There is also a pole at $q=\frac 12$ in the coefficient $\frac{\zeta(2q)}{2}$ of the first term in the argument of the $\BBB_q$ transform. However, when applying the $\BBB_q$, we obtain that $\psi_{q}^{+}$ can be written as in \eqref{forma-eig-p} with $c=\frac{\zeta(2q)}{2}$ and $b=\zeta(2q-1)$, so the first two terms are given by
\[
\frac{\zeta(2q)}{2}\, \frac{1}{z^{2q}} + \frac{\zeta(2q-1)\, \Gamma(2q-1)}{\Gamma(2q)}\, \frac 1z
\]
so that there is no pole at $q=\frac 12$, as it happens for the Eisenstein series in \eqref{eisen}.

\noindent
Finally we can compute the residue for $\psi_{q}^{+}$ at $q=1$ using \eqref{for-ps-zf}. The only contributing terms are those containing $\zeta(2q-1)$, which has residue $\frac 12$. Hence $\text{Res}_{q=1}(a_{n,q}) = \frac{(-1)^{n}}{2}$ and
$$
\text{Res}_{q=1}\left[ \frac{\zeta(2q)}{2}\, \frac{\delta_{0}(t)}{t^{2q-1}} + \frac{e^{-t}}{1-e^{-t}} \, \sum_{n=0}^{\infty}\, \frac{(-1)^{n} a_{n,q}\, t^{n}}{\Gamma(n+2q)}  \right] = \frac{e^{-t}}{1-e^{-t}} \, \sum_{n=0}^{\infty}\, \frac{t^{n}}{2\, \Gamma(n+2)}
$$
which gives
$$
\text{Res}_{q=1}(\psi_{q}^{+}) = \BBB_{1}\left[ \frac{e^{-t}}{1-e^{-t}} \, \sum_{n=0}^{\infty}\, \frac{t^{n}}{2\, \Gamma(n+2)}\right](z) =  \BBB_{1}\left[\frac {1}{2t}\right](z) = \frac{1}{2z}\, .
$$
This concludes the proof.
\end{proof}

\vskip 0.5cm
\noindent
By Theorem C-(a), the function $\psi_q^+$ satisfies the equation $\II_q \psi_q^+ = \psi_q^+$, as is easily verified using the definition \eqref{zagier-psi}. Then, using \eqref{tre} and \eqref{eis-to-psi} it follows that the function
\begin{equation}\label{phi-tilde}
\tilde \varphi (t) := \frac{1}{\Gamma(2q)}\, \frac{e^{-t}}{1-e^{-t}}\, \sum_{k\ge 0} A_{k,q}\, \frac{t^{k}}{k!}
\end{equation}
satisfies 
\[
\BBB_q[\tilde \varphi] = \LLL[\chi_{2q-1}\, \tilde \varphi] = \frac{2^{-q-\frac 12}}{\Gamma(q+\frac 12)}\, \LLL \left[ \chi_{q}\, \HHH_{q-\frac 12}[\tilde E(iy,q)]\right]\, ,
\]
from which we get the analogue of \eqref{u-to-fi}
\begin{equation}\label{eis-to-fi}
\tilde \varphi (t) = \frac{2^{-q-\frac 12}}{\Gamma(q+\frac 12)}\, t^{1-q}\, \HHH_{q-\frac 12}[\tilde E(iy,q)](t)\, ,
\end{equation}
for $\tilde E(iy,q)$ defined in \eqref{eisen-q-tilde}. From this we get an analytic continuation of $E(iy,q)$ different from the Fourier series expansion \eqref{eisen}.

\begin{theorem}\label{cor-eis}
The function $U(iy)$ defined by
\begin{equation}\label{fi-for-psi}
\begin{aligned}
U(iy):= \, & \zeta(2q)\, \Big( y^{q} + y^{-q} \Big) - 2\, \zeta(2q)\, \left( \frac{y}{1+y^2} \right)^q +\\[0.2cm]
& + 2^{q+\frac 12}\, \Gamma\left(q+\frac 12\right) \sum_{n=0}^{\infty}\, (-1)^{n}\, b_{n,q}\, \frac{y^{\frac 12}\, \PPP^{-q+\frac 12}_{n+q-\frac 12} \left( \frac{1}{(1+y^{2})^{\frac 12}} \right) + y^{n+q}\, \PPP^{-q+\frac 12}_{n+q-\frac 12} \left( \frac{y}{(1+y^{2})^{\frac 12}} \right)}{(1+y^{2})^{\frac n2 + \frac q2 + \frac 14}} 
\end{aligned}
\end{equation}
with
\[
b_{n,q} = (-1)^{n}\, \frac{\Gamma(n+2q)}{(n+1)!\, \Gamma(2q)}\, \sum_{i=0}^{n}\, {n+1\choose i}\, B_{i}\, \zeta(2q-1+i)\, ,
\]
gives an analytic continuation of the Eisenstein series $E(iy,q)$ in \eqref{eisen-q} to $q\in \C$ with a simple pole at $q=1$ with residue the constant function $\frac \pi 2$.
\end{theorem}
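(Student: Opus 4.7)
The plan is to mimic the proof of Theorem \ref{ext-to-0} with $\varphi$ replaced by $\tilde\varphi$; the new ingredient is that $\tilde\varphi$ does not satisfy $(M+N_q)\tilde\varphi = \tilde\varphi$ but a perturbed version of that eigenvalue equation. The starting point is the inversion of \eqref{eis-to-fi}, valid for $\Re(q)>1$ by the involutivity of $\HHH_{q-\frac12}$:
\[
\tilde E(iy,q) \;=\; 2^{q+\frac12}\,\Gamma(q+\tfrac12)\,\HHH_{q-\frac12}[\chi_{q-1}\tilde\varphi](y).
\]
I first establish $(M+N_q)\tilde\varphi = \tilde\varphi - \frac{\zeta(2q)}{\Gamma(2q)}\exp_{-1}$. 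Under the intertwining $\BBB_q\circ(M+N_q) = \PP_q^+\circ\BBB_q$ from \cite{BI}, this is equivalent to $\PP_q^+ S(z) = S(z) - \zeta(2q)(z+1)^{-2q}$ for $S(z):=\BBB_q[\tilde\varphi](z)=\sum_{m,n\ge 1}(mz+n)^{-2q}$, and the latter follows from a direct index rearrangement of the two subsums of $\PP_q^+ S$, combined with the elementary evaluation $\BBB_q[\exp_{-1}](z)=\Gamma(2q)(z+1)^{-2q}$.

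Next I repeat the argument from \eqref{est-u-beta} to \eqref{est-u-to-0} verbatim. Define $\tilde U_\beta(iy):=\HHH_{q-\frac12}[\exp_{-\beta}\chi_{q-1}\tilde\varphi](y)$, apply the functional equation in the form $\tilde\varphi=(M+N_q)\tilde\varphi+\frac{\zeta(2q)}{\Gamma(2q)}\exp_{-1}$ together with \cite[vol.~II, eq.~8.12.(17), p.~58]{E2}, and pass to $\beta\to 0^+$ to obtain
\[
\tilde U_0(iy) \;=\; \tilde g(y) + \tilde g(1/y) + \frac{\zeta(2q)}{\Gamma(2q)}\HHH_{q-\frac12}[\chi_{q-1}\exp_{-1}](y),
\]
with $\tilde g(y):=\HHH_{q-\frac12}[\exp_{-1}\chi_{q-1}\tilde\varphi](y)$. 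A classical Laplace--Hankel integral, followed by Legendre duplication, gives $\HHH_{q-\frac12}[\chi_{q-1}\exp_{-1}](y) = \Gamma(2q)\cdot 2^{1/2-q}\Gamma(q+\tfrac12)^{-1}\,(y/(1+y^2))^q$, so the correction term equals $\zeta(2q)$ times the $n=0$ Legendre summand appearing in $U(iy)$. Expanding $\tilde g$ by inserting the $A_{k,q}$-representation of $\tilde\varphi$ from \eqref{phi-tilde} and applying \cite[vol.~II, eq.~8.6.(6), p.~29]{E2} to each $\HHH_{q-\frac12}[\exp_{-1}\chi_{k+q-1}]$ exactly as in the last step of the proof of Theorem \ref{ext-to-0}, I obtain a Legendre series whose coefficient of the $n=0$ summand is $\zeta(2q-1)-\zeta(2q)$ and whose coefficient of the $k$-th summand ($k\ge 1$) is $A_{k,q}\Gamma(k+2q)/(\Gamma(2q)\,k!)$. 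Multiplying through by $2^{q+\frac12}\Gamma(q+\tfrac12)$, adding $\zeta(2q)(y^q+y^{-q})$ to recover $E(iy,q)$, and splitting the total $n=0$ contribution as $-2\zeta(2q)(y/(1+y^2))^q$ plus $\zeta(2q-1)$ times the $n=0$ Legendre summand (consistent with $b_{0,q}=\zeta(2q-1)$), then identifying $A_{k,q}\Gamma(k+2q)/(\Gamma(2q)k!)$ with $(-1)^k b_{k,q}$ for $k\ge 1$ via the explicit $A_{k,q}$-formulas recorded in the proof of Theorem \ref{zagier-funct-ser}, establishes $U(iy)=E(iy,q)$ for $\Re(q)>1$.

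The analytic continuation to $q\in\C$ follows from the $O(k^2)$ bound on $|A_{k,q}|$ obtained in the proof of Theorem \ref{zagier-funct-ser}, which via the closed form of $b_{k,q}$ gives uniform convergence of the Legendre series on compacts of $(0,\infty)$, paralleling the argument used after \eqref{legendre-exp-theta}. The only non-holomorphic $q$-dependence enters through $\zeta(2q-1)$, yielding a simple pole at $q=1$ with $\text{Res}_{q=1}b_{n,q}=(-1)^n/2$ for every $n\ge 0$. The residue of $U(iy)$ at $q=1$ then equals $\frac{\sqrt{2\pi}}{2}$ times the sum at $q=1$ of the Legendre summands of the theorem. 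Setting $y=\tan\vartheta$ and using the closed form $\PPP^{-1/2}_{n+\frac12}(\cos\theta)=\sqrt{2/(\pi\sin\theta)}\sin((n+1)\theta)/(n+1)$ with the generating series $\sum_{m\ge 1}r^m\sin(m\phi)/m=\mathrm{Im}(-\log(1-re^{i\phi}))$, the two resulting Fourier-type sums telescope to $\pi/2-\vartheta$ and $\vartheta$, giving the $y$-independent residue $\pi/2$. The main obstacle is the bookkeeping in the second step, where the $\zeta(2q)$-shift from the functional-equation correction has to be split correctly between the explicit $-2\zeta(2q)(y/(1+y^2))^q$ summand of $U(iy)$ and the $n=0$ term of the Legendre series.
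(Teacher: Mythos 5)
Your overall strategy coincides with the paper's: derive the perturbed eigenvalue equation $(M+N_q)\tilde\varphi=\tilde\varphi-\frac{\zeta(2q)}{\Gamma(2q)}\exp_{-1}$, rerun the $\beta$-regularization of Theorem \ref{ext-to-0}, evaluate the Hankel transforms term by term via \cite[vol.~II, eq.~8.6.(6)]{E2}, and compute the residue at $q=1$ with $y=\tan\vartheta$ and the elementary closed form of $\PPP^{-1/2}_{n+1/2}$. Your derivation of the functional equation by rearranging the indices of $\PP_q^+\sum_{m,n\ge 1}(mz+n)^{-2q}$ is a legitimate (and equivalent) substitute for the paper's route via $\PP_q^+\psi_q^+=\psi_q^+$, and your bookkeeping of the $n=0$ contribution, despite a factor-of-two looseness in the phrase ``the correction term equals $\zeta(2q)$ times the $n=0$ Legendre summand'' (it is half of it, since the summand contains both the $y^{1/2}\PPP(\cdot)$ and the $y^{n+q}\PPP(\cdot)$ pieces), lands on the correct final splitting $-2\zeta(2q)\left(\frac{y}{1+y^2}\right)^q$ plus $\zeta(2q-1)$ times the $n=0$ summand.

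There is, however, one genuine gap: your assertion that ``the only non-holomorphic $q$-dependence enters through $\zeta(2q-1)$'' is not true of the expression \eqref{fi-for-psi} as written. The formula contains $\zeta(2q)$ explicitly --- in the term $\zeta(2q)(y^q+y^{-q})$, in $-2\zeta(2q)\left(\frac{y}{1+y^2}\right)^q$, and inside every $b_{n,q}$ through $-\frac{n+1}{2}\zeta(2q)$ --- and $\zeta(2q)$ has a simple pole at $q=\frac12$. The claim that the continuation has a pole \emph{only} at $q=1$ therefore requires showing that the total coefficient of $\zeta(2q)$ in \eqref{fi-for-psi} vanishes at $q=\frac12$; the paper does this by reducing the Legendre functions at $q=\frac12$ to Legendre polynomials and summing them with the generating identity $\sum_n P_n(\alpha)\beta^n=(1-2\alpha\beta+\beta^2)^{-1/2}$, which produces exactly $y^{1/2}+y^{-1/2}-2\left(\frac{y}{1+y^2}\right)^{1/2}$ and hence a vanishing residue. (Alternatively one could argue removability at $q=\frac12$ from the already-proved identity $U=E$ on $\Re(q)>1$ together with local uniform convergence in $q$ and the known regularity of $E(iy,q)$ at $q=\frac12$, but some argument is needed; the statement you give does not supply one.) Everything else in your proposal matches the paper's proof.
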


\begin{proof}
Writing the Eisenstein series $E(iy,q)$ as in \eqref{eisen-q}
\[
E(iy,q) = \zeta(2q)\, \Big( y^{q} + y^{-q} \Big) + \tilde E(iy,q)\, ,
\]
we proceed as in Theorem \ref{ext-to-0} to invert the relation \eqref{eis-to-fi}.

\noindent
The proof follows the same lines as that of Theorem \ref{ext-to-0} with some modifications. The first is that the function $\tilde \varphi$ satisfies the functional equation
\begin{equation}\label{eq-funz-tilde}
((M+N_q) \tilde \varphi)(t) = \tilde \varphi(t) - \frac{\zeta(2q)}{\Gamma(2q)}\, e^{-t}\, .
\end{equation}
This follows by applying $\PP^+_q$ to $\psi_q^+(z)= \frac{\zeta(2q)}{2} \, (1+z^{-2q}) +\BBB_q[\tilde \varphi] (z)$. Indeed $\tilde \varphi$ is of the right form to apply \eqref{trasf-op-l2}, hence
\[
\psi_q^+(z) = (\PP_q^+ \psi_q^+) (z) = \PP_q^+ \left( \frac{\zeta(2q)}{2} \, (1+z^{-2q})\right) + (\PP_q^+ \BBB_q[\tilde \varphi] )(z) =
\]
\[
=  \frac{\zeta(2q)}{2} \, (1+z^{-2q}) + \zeta(2q)\, (1+z)^{-2q}\, + \BBB_q[(M+N_q)\tilde \varphi)](z)\, .
\]
Using
\[
(1+z)^{-2q} = \frac{1}{\Gamma(2q)}\, z^{-2q}\, \int_0^\infty\, e^{-t(1+\frac 1z)}\, t^{2q-1}\, dt = \frac{1}{\Gamma(2q)}\, \BBB_q[e^{-t}](z)
\]
we obtain \eqref{eq-funz-tilde}.

\noindent
Letting now
\[
\tilde U_\beta(iy) := 2^{q+\frac 12}\, \Gamma\left(q+\frac 12\right)\, \HHH_{q-\frac 12} [\exp_{-\beta}\, \chi_{q-1}\, \tilde \varphi](y)\, , \qquad \Re(\beta)>0
\]
we get from \eqref{eq-funz-tilde}
\[
\tilde U_\beta(iy) = 2^{q+\frac 12}\, \Gamma\left(q+\frac 12\right)\, \Big( \HHH_{q-\frac 12} [\exp_{-\beta}\, \chi_{q-1}\, (M+N_q)\tilde \varphi](y) + \frac{\zeta(2q)}{\Gamma(2q)} \HHH_{q-\frac 12} [\exp_{-\beta-1}\, \chi_{q-1}](y) \Big)\, .
\]
For the first term on the right hand side, for $\xi > \frac 12$ we can repeat the arguments of the proof of Theorem \ref{ext-to-0} leading to \eqref{est-u-to-0}, to get
\[
\HHH_{q-\frac 12} [\exp_{-\beta}\, \chi_{q-1}\, (M+N_q)\tilde \varphi]\big|_{\beta=0}(y) = \HHH_{q-\frac 12} [\exp_{-1}\, \chi_{q-1}\, \tilde \varphi](y) + \HHH_{q-\frac 12} [\exp_{-1}\, \chi_{q-1}\,  \tilde \varphi]\left( \frac 1y \right)\, ,
\] 
whereas the second term is absolutely convergent for $\beta=0$, thus we simply have
\[
\HHH_{q-\frac 12} [\exp_{-\beta-1}\, \chi_{q-1}]\big|_{\beta=0}(y) = \HHH_{q-\frac 12} [\exp_{-1}\, \chi_{q-1}](y)\, .
\]
Hence we obtain the continuation of $\tilde U_\beta$ to a neighborhood of $\beta=0$, and define $\tilde U(iy) := \tilde U_0(iy)$ by
\begin{equation}\label{est-U-to-0}
\tilde U(iy) = 2^{q+\frac 12}\, \Gamma\left(q+\frac 12\right)\, \Big( \HHH_{q-\frac 12} [\exp_{-1}\, \chi_{q-1}\, \tilde \varphi](y) + \HHH_{q-\frac 12} [\exp_{-1}\, \chi_{q-1}\,  \tilde \varphi]\left( \frac 1y \right) + \frac{\zeta(2q)}{\Gamma(2q)} \HHH_{q-\frac 12} [\exp_{-1}\, \chi_{q-1}](y) \Big)\, .
\end{equation}
To finish the proof, we use \eqref{phi-tilde} to write $\tilde \varphi$ as 
\[
\tilde \varphi(t) = \frac{e^{-t}}{1-e^{-t}} \, \sum_{n=0}^{\infty}\, \frac{(-1)^{n} b_{n,q}\, t^{n}}{\Gamma(n+2q)} 
\]
with $b_{n,q} = (-1)^n\, \frac{\Gamma(n+2q)}{n!\, \Gamma(2q)}\, A_{n,q}$, hence
\[
\left\{
\begin{array}{l}
b_{0,q} = \zeta(2q-1)\\[0.2cm]
b_{n,q} = (-1)^{n}\, \frac{\Gamma(n+2q)}{(n+1)!\, \Gamma(2q)}\, \sum_{i=0}^{n}\, {n+1 \choose i}\, B_{i}\, \zeta(2q-1+i)\, , \quad n\ge 1
\end{array}
\right.
\]
\noindent
Then we define as in the proof of Theorem \ref{ext-to-0} the function $\tilde g(y) = \tilde G(y,\beta)\Big|_{\beta=0}$
with
$$
\tilde G(y,\beta) := \HHH_{q-\frac 12}[\exp_{-(1+\beta)}\, \chi_{q-1}\, \tilde \varphi] (y)
$$
and repeat the same argument used in the proof of Theorem \ref{ext-to-0} to prove \eqref{ale}, to show that
\[
\sum_{n=0}^{\infty} \frac{(-1)^{n}\, b_{n,q}}{\Gamma(n+2q)}\ \HHH_{q-\frac 12} \left[ \exp_{-1}\, \chi_{n+q-1}\right](y) = \frac{2^{-q-\frac 12}}{\Gamma(q+\frac 12)}\, \tilde U(iy) - \tilde G(y,0) = \tilde g\left( \frac 1y\right) + R(y)\, ,
\]
with $R(y) := \frac{\zeta(2q)}{\Gamma(2q)} \HHH_{q-\frac 12} [\exp_{-1}\, \chi_{q-1}](y)$. The above equation can be used to obtain an expression for $\tilde g\left( \frac 1y\right)$ and the analogous for $\tilde g(y)$, that when substituted in \eqref{est-U-to-0} finally give
\[
\frac{2^{-q-\frac 12}}{\Gamma(q+\frac 12)}\, \tilde U(iy) =  \sum_{n=0}^{\infty} \frac{(-1)^{n}\, b_{n,q}}{\Gamma(n+2q)}\ \HHH_{q-\frac 12} \left[ \exp_{-1}\, \chi_{n+q-1}\right](y) + \sum_{n=0}^{\infty} \frac{(-1)^{n}\, b_{n,q}}{\Gamma(n+2q)}\ \HHH_{q-\frac 12} \left[ \exp_{-1}\, \chi_{n+q-1}\right]\left( \frac 1y\right) - R\left( \frac 1y\right)\, .
\]
The last step of the proof consists of the calculations of the Hankel transforms. The first one is the same as in Theorem \ref{ext-to-0}, that is
\[
 \sum_{n=0}^{\infty} \frac{(-1)^{n}\, b_{n,q}}{\Gamma(n+2q)}\ \HHH_{q-\frac 12} \left[ \exp_{-1}\, \chi_{n+q-1}\right](y) = \sum_{n=0}^{\infty}\, (-1)^{n}\, b_{n,q}\, \frac{y^{\frac 12}}{(1+y^{2})^{\frac n2 + \frac q2 + \frac 14}}\, \PPP^{-q+\frac 12}_{n+q-\frac 12} \left( \frac{1}{(1+y^{2})^{\frac 12}} \right)\, ,
 \]
 and the second one is
 \[
 R(y) = \frac{\zeta(2q)}{\Gamma(2q)}\, \frac{2^{q-\frac 12}\, \Gamma(q)}{\pi^{\frac 12}}\, \left( \frac{y}{1+y^2} \right)^q = \zeta(2q)\, \frac{2^{-q+\frac 12}}{\Gamma(q+\frac 12)}\, \, \left( \frac{y}{1+y^2} \right)^q 
\]
where we have used \cite[vol. II, eq. 8.6.(5), p. 29]{E2} and $\Gamma(2q) = \pi^{-\frac 12}\, 2^{2q-1}\, \Gamma(q)\, \Gamma(q+\frac 12)$.

\noindent
In the proof of Theorem \ref{zagier-funct-ser} we proved that $|A_{n,q}| = O(n^2)$, hence arguing as in \eqref{legendre-exp-theta}, we obtain that the expansion \eqref{fi-for-psi} is well defined for all $q\in \C$, except $q=\frac 12$ and $q=1$, and for all $y>0$. Moreover it is uniformly convergent in $y$ on any compact interval contained in $(0,\infty)$.

\noindent
We now first show that the expression \eqref{fi-for-psi} has no pole at $q=\frac 12$. It is enough to show that the term multiplying $\zeta(2q)$ vanishes at $q=\frac 12$, indeed
\[
\lim_{q\to \frac 12}\, (2q-1)\, U(iy) = y^{\frac 12} + y^{-\frac 12} - 2 \left( \frac{y}{1+y^2} \right)^{\frac 12} - \sum_{n=1}^{\infty}\, \frac{y^{\frac 12}\, P_{n} \left( \frac{1}{(1+y^{2})^{\frac 12}} \right) + y^{n+\frac 12}\, P_n \left( \frac{y}{(1+y^{2})^{\frac 12}} \right)}{(1+y^{2})^{\frac n2 + \frac 12}}
\]
where $P_n$ are the Legendre polynomials. Using equation (see \cite[eq. 18.12.11, p. 449]{nist})
\[
\sum_{n=0}^\infty\, P_n(\alpha)\, \beta^n = (1-2\alpha \beta +\beta^2)^{-\frac 12}
\]
for $\alpha \in (0,1)$ and $|\beta|< 1$, we obtain
\[
\sum_{n=1}^{\infty}\, \frac{y^{\frac 12}\, P_{n} \left( \frac{1}{(1+y^{2})^{\frac 12}} \right) + y^{n+\frac 12}\, P_n \left( \frac{y}{(1+y^{2})^{\frac 12}} \right)}{(1+y^{2})^{\frac n2 + \frac 12}} = \left( \frac{y}{1+y^2} \right)^{\frac 12} \left( \frac{(1+y^2)^{\frac 12}}{y} + (1+y^2)^{\frac 12} -2\right)\, ,
\]
hence
\[
\lim_{q\to \frac 12}\, (2q-1)\, U(iy) = 0\, .
\]
At $q=1$, the expression \eqref{fi-for-psi} has instead a pole with a residue that can be computed using $\text{Res}_{q=1}(b_{n,q}) = \frac{(-1)^{n}}{2}$. Letting $y=\tan \vartheta$ as above we find
$$
\text{Res}_{q=1}(U)(i y) = 2^{\frac 12}\, \Gamma\left( \frac 32 \right)\, (\sin \vartheta\, \cos \vartheta )^{\frac 12}\, \sum_{n=0}^{\infty}\, \, \Big[ (\cos \vartheta)^{n+\frac 12}\, \PPP^{-\frac 12}_{n+\frac 12} \left( \cos \vartheta \right) + (\sin \vartheta)^{n+\frac 12}\, \PPP^{-\frac 12}_{n+\frac 12} \left( \sin \vartheta \right) \Big]
$$
Using \cite[eq. 14.5.12, p. 359]{nist} we get
$$
(\sin \vartheta)^{\frac 12}\, (\cos \vartheta)^{n+1} \, \PPP^{-\frac 12}_{n+\frac 12} \left( \cos \vartheta \right) = \frac{1}{n+1}\, \left( \frac 2 \pi\right)^{\frac 12}\, (\cos \vartheta)^{n+1} \, \sin \left( (n+1) \vartheta \right)
$$
Hence
$$
\sum_{n=0}^{\infty}\, (\sin \vartheta)^{\frac 12}\, (\cos \vartheta)^{n+1} \, \PPP^{-\frac 12}_{n+\frac 12} \left( \cos \vartheta \right)  = \left( \frac 2 \pi\right)^{\frac 12}\, \Im \left( \sum_{n=0}^{\infty}\, \frac{1}{n+1}\, \left( \frac{1+\exp(2i\vartheta)}{2} \right)^{n+1} \right) =
$$
$$
= \left( \frac 2 \pi\right)^{\frac 12}\, \Im \left( -\log\left( \frac{1-\exp(2i\vartheta)}{2} \right)  \right) = - \left( \frac 2 \pi\right)^{\frac 12}\, \arctan \left(-\frac{\sin (2\vartheta)}{1-\cos(2\vartheta)}\right) = \left( \frac 2 \pi\right)^{\frac 12}\, \arctan \frac 1y
$$
recalling that $y=\tan \vartheta$.
Hence finally
$$
\text{Res}_{q=1}(U)(iy) = 2^{\frac 12}\, \Gamma\left( \frac 32 \right)\, \left( \frac 2 \pi\right)^{\frac 12}\, \Big( \arctan y + \arctan \frac 1y\Big) = \frac \pi 2
$$
recalling $\Gamma(\frac 32) = \frac 1 2\, \pi^{\frac 12}$.
\end{proof}

\subsection{The odd case for 0-gpf} \label{sec:odd-good}
Here we are going to repeat the approach of Section \ref{sec:even-good} for odd period functions, which in the case $w=1$ are in one-to-one correspondence with the set of odd Maass cusp forms, as shown in \cite{LeZa}. Also in this case it is fundamental to use the Fourier series expansion for the odd cusp forms given by
\begin{equation}\label{four-exp-odd}
u(x+iy) = y^{\frac 12}\, \sum_{n\ge 1}\, c_{n,q}\, K_{q-\frac 12}(2\pi n y)\, \sin(2\pi nx)\, ,
\end{equation}
with $c_{n,q}$ having at most a polynomial growth. The integral correspondence between even cusp forms and even period functions proved in \cite{Le} is extended to the odd case in \cite[Section II.1]{LeZa}. We can formally proceed as for \eqref{formula-1} by applying \cite[Proposition 4.3]{Le} to get\footnote{Proposition 4.3 in \cite{Le} can be applied only for $\xi > \frac 32$.}
\[
\psi(z) = - \frac 1z\, \LLL\left[ \chi_{q-1} \HHH_{q-\frac 32} [y\, u_x(i y)] \right] (z)\, ,
\]
where $u_x = \frac{\partial}{\partial x} u$, and find by \eqref{invo-fi} that 
\[
\frac 1z\, \LLL\left[ \chi_{q-1} \HHH_{q-\frac 32} [y\, u_x(i y)] \right] (z) \doteq \LLL\left[ \chi_{2q-1} \varphi \right] (z)
\]
for a function $\varphi \in L^2(m_q)$ satisfying $(M-N_q)\varphi = \varphi$, with expansion as in \eqref{fi-power-b} with $a_{0,q}=0$. From this we obtain the analogous of \eqref{u-to-fi} and \cite[equation (2.27)]{LeZa} for odd Maass cusp forms. By \cite[vol. I, eq. 4.1.(9), p. 130, and vol. II, eq. 8.1.(6), p. 5]{E2} we finally have\footnote{See Remark \ref{notazione-cost}.}
\begin{equation}\label{u-to-fi-odd}
\varphi(t) \doteq t^{1-2q} \, \int_0^t\, \tau^{q-1}\, \HHH_{q-\frac 32} [y\, u_x(i y)] (\tau)\, d\tau = t^{-q}\, \HHH_{q-\frac 12} [u_x(i y)] (t)\, . 
\end{equation}
Notice that the Hankel transform in \eqref{u-to-fi-odd} is absolutely convergent for $\xi>0$ thanks to the rapid decay properties of Maass forms. 

\noindent
We now make use of the involution property of the Hankel transform as in Section \ref{sec:even-good} and repeat the proof of Proposition \ref{ext-to-0}. We first define the ``modified'' inverse of \eqref{u-to-fi-odd}
\begin{definition} \label{def-0-gmf-beta-odd}
For any $q$ with $\Re(q)>0$ and $w\in \C \setminus (1,\infty)$, define the one-parameter family of functions
\begin{equation}\label{fi-to-u-beta-odd}
v_\beta(iy) := \HHH_{q-\frac 12} [\exp_{-\beta}\, \chi_{q}\, \varphi](y)\, , \qquad \Re(\beta)>0
\end{equation}
for functions $\varphi:(0,+\infty) \to \C$ which make the integral converge.
\end{definition}

\noindent
Then we show that if $\varphi$ is an eigenfunction of $(M-N_q)$ then we can put $\beta=0$ in \eqref{fi-to-u-beta-odd}.

\begin{theorem}\label{ext-to-0-odd}
For any $q$ with $\Re(q)>0$, any $w\in \C \setminus (1,\infty)$, and any $\varphi$ as in Proposition \ref{le-fi} with $a_{0,q}=0$, the function $v_\beta(iy)$ can be extended for all $y>0$ as an analytic function of $\beta$ to a small domain containing the origin. Moreover $v_0(iy)$ satisfies
\begin{equation}\label{la-u-odd}
v_0(iy) = w \left[ g(y) - \frac{1}{y^2}\, g\left( \frac 1y \right) \right] \,  ,\qquad \forall\, y>0
\end{equation}
where
$$
g(y)= \HHH_{q-\frac 12}[\exp_{-1}\, \chi_{q}\, \varphi] (y) = \sum_{n=1}^{\infty} (-1)^{n}\, (n+2q-1)\, a_{n-1,q}\, \frac{y^{n+q-2}}{(1+y^{2})^{\frac n2 + \frac q2 + \frac 14}}\, \PPP^{-q+\frac 12}_{n+q-\frac 12} \left( \frac{y}{(1+y^{2})^{\frac 12}} \right)\, ,
$$
and $\{a_{n,q}\}$ is given in \eqref{fi-power-0} with $a_{0,q}=0$.
\end{theorem}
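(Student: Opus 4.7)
The approach mirrors that of Theorem~\ref{ext-to-0}, with two structural changes: the functional equation now carries a minus sign, $(M-N_q)\varphi = \frac{1}{w}\varphi$, and the Hankel transform is weighted by $\chi_q$ rather than $\chi_{q-1}$. Applying the functional equation, I would split
\[
(u_x)_\beta(iy) = w\,\HHH_{q-\frac12}[\exp_{-\beta}\,\chi_q\, M\varphi](y) - w\,\HHH_{q-\frac12}[\exp_{-\beta}\,\chi_q\, N_q\varphi](y).
\]
The $M$-term is absolutely convergent for $\Re(\beta)>-1$ by the same bounds as in Theorem~\ref{ext-to-0}, since the extra power of $t$ coming from $\chi_q$ is harmless against the $e^{-t}$ decay supplied by $M$; at $\beta=0$ it equals $g(y)=\HHH_{q-\frac12}[\exp_{-1}\chi_q\varphi](y)$.

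For the $N_q$-term, the most economical route is the identity $\chi_q\exp_{-\beta} = -\partial_\beta(\chi_{q-1}\exp_{-\beta})$, which gives
\[
\HHH_{q-\frac12}[\exp_{-\beta}\,\chi_q\, N_q\varphi](y) = -\partial_\beta\, \HHH_{q-\frac12}[\exp_{-\beta}\,\chi_{q-1}\, N_q\varphi](y).
\]
The right-hand side has the explicit closed form derived in Theorem~\ref{ext-to-0} via Fubini and \cite[vol.~II, eq.~8.12.(17)]{E2}, and is analytic in $\beta$ in a neighbourhood of the origin. Differentiating in $\beta$ and evaluating at $\beta=0$, only the derivative of $e^{-s\beta/(y^2+\beta^2)}$ contributes, producing a factor $-s/y^2$; combined with the remaining $s^{q-1}/y$ coming from the even-case closed form, this yields exactly $\frac{1}{y^2}\,\HHH_{q-\frac12}[\exp_{-1}\chi_q\varphi](1/y)=\frac{1}{y^2}g(1/y)$. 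The minus sign from the functional equation then delivers \eqref{la-u-odd}.

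For the power series representation of $g$, I would substitute the expansion \eqref{fi-power-0} of $\varphi$ (with $a_{0,q}=0$), use the splitting $\frac{we^{-t}}{1-we^{-t}}=\frac{1}{1-we^{-t}}-1$ exactly as in the even case, and interchange the Hankel transform with the series; this interchange is justified by the Bessel bound \eqref{stima-j} together with $\limsup_n|a_{n,q}|^{1/n}\le 1$. The terms $\HHH_{q-\frac12}[\exp_{-1}\chi_{n+q}](y)$ are then evaluated via \cite[vol.~II, eq.~8.6.(6), p.~29]{E2}; because the weight is $\chi_{n+q}$ rather than $\chi_{n+q-1}$, the formula produces a Legendre function of index $n+q+\frac12$, which a standard contiguous relation rewrites in terms of $\PPP^{-q+\frac12}_{n+q-\frac12}$ with an extra factor involving $n+2q$. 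A reindexing $n\mapsto n-1$ then delivers the prefactor $(n+2q-1)\,a_{n-1,q}$ and the claimed series.

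The main obstacle is the bookkeeping in the $N_q$-term: tracking the $1/y^2$ factor and the Legendre index shift through the computation so that both match the stated formula. The derivative-in-$\beta$ trick is what makes this tractable, reducing the odd-case $N_q$ computation to a single differentiation of the closed form already proved in the even case and bypassing the need to redo the two-variable absolute convergence estimates from scratch.
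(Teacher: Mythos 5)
Your proposal is correct and follows essentially the same route as the paper: the same splitting via $(M-N_q)\varphi=\frac1w\varphi$, the same trick of writing $\chi_q\exp_{-\beta}=-\partial_\beta(\chi_{q-1}\exp_{-\beta})$ to reduce the $N_q$-term to a $\beta$-derivative of the even-case closed form (the paper applies this inside the inner integral after Fubini), and the same observation that only the derivative of the exponential survives at $\beta=0$, producing the factor $\frac{1}{y^2}g(1/y)$. The only cosmetic difference is in the series step: the paper absorbs the extra power of $t$ into $\tilde\varphi(t)=t\varphi(t)$ and reindexes, so no contiguous relation for the Legendre functions is needed --- the factor $n+2q-1$ arises simply as $\Gamma(n+2q)/\Gamma(n+2q-1)$ from the Gamma normalization in the tabulated Hankel transform, which is what your reindexing $n\mapsto n-1$ amounts to anyway.
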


\begin{proof}
Let us fix $y>0$. Using the functional equation $(M-N_q)\varphi = \frac 1w \varphi$, we can write
\begin{equation} \label{est-ux-beta}
v_\beta(iy) = w\, \HHH_{q-\frac 12}[ \exp_{-\beta}\, \chi_{q} M\varphi](y) - w\, \HHH_{q-\frac 12}[ \exp_{-\beta}\, \chi_{q} N_q\varphi](y)
\end{equation}
since the first integral on the right hand side is absolutely convergent. Moreover we can change the order of integration in the second integral, that is
$$
\HHH_{q-\frac 12}[ \exp_{-\beta}\, \chi_{q} N_q\varphi](y) = \int_0^\infty\, J_{q-\frac 12}(ty) \sqrt{ty}\, e^{-\beta t} t^{q} \, \int_0^\infty \, J_{2q-1}(2\sqrt{st}) \left( \frac st \right)^{q-\frac 12}\, e^{-s} \varphi(s) \, ds \ dt =
$$
$$
= \int_0^\infty\, e^{-s} \, s^{q-1} \sqrt{s} \, \varphi(s) \, \int_0^\infty\, J_{q-\frac 12}(ty) \sqrt{ty}\, J_{2q-1}(2\sqrt{st})\, e^{-\beta t}\, t^{\frac 12} dt \ ds
$$
since again the two-variable integral is absolutely convergent. Now we use \cite[vol. I, eq. 4.1.(6), p. 129, and eq. 4.14.(38), p. 186]{E2} to write
\[
\int_0^\infty\, J_{q-\frac 12}(ty) \sqrt{ty}\, J_{2q-1}(2\sqrt{st})\, e^{-\beta t}\, t^{\frac 12} dt = - \sqrt{y}\, \frac{d}{d\beta}\, \int_0^\infty\, J_{q-\frac 12}(ty) \, J_{2q-1}(2\sqrt{st})\, e^{-\beta t}\, dt =
\]
\[
= - \sqrt{y}\, \frac{d}{d\beta}\, \Big[ e^{- \frac{s \beta}{y^2+\beta^2}}\, (y^2 + \beta^2)^{-\frac 12}\, J_{q-\frac 12} \left( \frac{sy}{y^2+\beta^2}\right) \Big]\, ,
\]
and consequently
\[
\HHH_{q-\frac 12}[ \exp_{-\beta}\, \chi_{q} N_q\varphi](y) = - \int_0^\infty\, e^{-s} \, s^{q-1} \sqrt{s} \, \varphi(s) \, \sqrt{y}\, \frac{d}{d\beta}\, \Big[ e^{- \frac{s \beta}{y^2+\beta^2}}\, (y^2 + \beta^2)^{-\frac 12}\, J_{q-\frac 12} \left( \frac{sy}{y^2+\beta^2}\right) \Big]\, ds\, .
\]
Computing all the terms in the previous derivative, we see as in the proof of Theorem \ref{ext-to-0} that all the addends of the integral are absolutely convergent for
\[
\Re \Big( 1 + \frac{\beta}{y^2+\beta^2} \Big) >0\, .
\]
Hence we can again set $\beta=0$ and it turns out that there is only one non-vanishing term, so
\[
\HHH_{q-\frac 12}[ \exp_{-\beta}\, \chi_{q} N_q\varphi]\Big|_{\beta=0}(y) = \frac{1}{y^2}\, \HHH_{q-\frac 12}[ \chi_{q} M\varphi]\left( \frac 1y \right)\, .
\]
So we argue as in Theorem \ref{ext-to-0} and from \eqref{est-ux-beta} we get \eqref{la-u-odd} with
\[
g(y) = \HHH_{q-\frac 12}[ \exp_{-1} \chi_{q} \varphi](y)\, .
\]
The proof is finished as in Theorem \ref{ext-to-0} since we can write $g = \HHH_{q-\frac 12}[ \exp_{-1} \chi_{q-1} \tilde \varphi]$ with $\tilde \varphi (t) = t \varphi(t)$. Hence
\[
\tilde \varphi (t) = \frac{we^{-t}}{1-we^{-t}} \, \sum_{n=1}^{\infty}\, \frac{(-1)^{n-1} a_{n-1,q}\, t^{n}}{\Gamma(n+2q-1)}
\]
and at the end we get
$$
-\frac{1}{y^2}\, g\left( \frac 1y \right) = \sum_{n=1}^{\infty}\, (-1)^{n-1}\, (n+2q-1)\, a_{n-1,q}\, \frac{y^{\frac 12}}{(1+y^{2})^{\frac n2 + \frac q2 + \frac 14}}\, \PPP^{-q+\frac 12}_{n+q-\frac 12} \left( \frac{1}{(1+y^{2})^{\frac 12}} \right)\, .
$$
This finishes the proof.
\end{proof}

\noindent
As in the even case, one can show that the expansion for $v_0(iy)$ obtained in Theorem \ref{ext-to-0-odd} is uniformly convergent on any compact interval of $(0,\infty)$. Moreover we have the following

\begin{corollary}\label{cor-maass-odd}
Letting $w=1$, the function $v_0(iy)$ in \eqref{la-u-odd} is the restriction to the imaginary axis of the x-derivative of an odd Maass cusp form.
\end{corollary}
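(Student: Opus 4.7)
The plan is to mirror the argument used for Corollary \ref{cor-maass-even}, replacing the restriction to the imaginary axis by the $x$-derivative evaluated there, and invoking the odd-case analog of the Lewis--Zagier correspondence.

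First, I would observe that the function $u_x(iy)$ in \eqref{la-u-odd} automatically satisfies the correct symmetry
\[
u_x(iy) = -\frac{1}{y^2}\, u_x\!\left(\frac{i}{y}\right)\, , \qquad y>0\, ,
\]
as is immediate from the explicit form $u_x(iy) = g(y) - y^{-2} g(1/y)$. This is exactly the identity one obtains by differentiating the $\Gamma$-invariance relation $u_m(-1/z) = u_m(z)$ with respect to $x$ at $x=0$: at $z=iy$ one has $\partial_x(-x/(x^2+y^2))|_{x=0} = -1/y^2$ and $\partial_x(y/(x^2+y^2))|_{x=0} = 0$, so the chain rule gives $(u_m)_x(iy) = -y^{-2}(u_m)_x(i/y)$ for any $\Gamma$-invariant $u_m$.

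Next, I would invoke the odd-case analog of the Maass fundamental theorem: an odd Maass cusp form vanishes on the imaginary axis but is uniquely reconstructed from its $x$-derivative on the imaginary axis, because the Fourier coefficients $c_{n,q}$ in the expansion \eqref{four-exp-odd} are determined by the values of $(u_m)_x(iy)$ (which, at $x=0$, becomes $2\pi y^{1/2}\sum_{n\ge 1} n\, c_{n,q}\, K_{q-1/2}(2\pi ny)$). Thus it suffices to identify $u_x(iy)$ with the boundary $x$-derivative of a bona fide odd Maass cusp form.

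For this identification I would use the integral correspondence \eqref{formula-2}--\eqref{u-to-fi-odd} together with the involution $\HHH_{q-1/2}\circ\HHH_{q-1/2}=\mathrm{Id}$ of the symmetric Hankel transform. Proposition \ref{le-fi} with $w=1$ produces the $\varphi\in L^2(m_q)$ attached to the given gpf, and inverting \eqref{u-to-fi-odd} gives
\[
(u_m)_x(iy) \doteq \HHH_{q-\frac 12}[\chi_q\, \varphi](y)\, ,
\]
which is precisely the Hankel transform whose analytic continuation in $\beta$ is carried out by Theorem \ref{ext-to-0-odd}. Combining this identification with the Lewis--Zagier bijection between odd period functions ($w=1$) and odd Maass cusp forms from \cite[Section II.1]{LeZa} yields the claim.

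The main obstacle is not the symmetry check (which is free from the form of \eqref{la-u-odd}) but the justification that the inverse Hankel integral defines the $x$-derivative of a genuine odd Maass cusp form, rather than of some merely symmetric function on $i\R^+$. This is handled by stringing together two already-established facts: the $\beta\to 0$ continuation in Theorem \ref{ext-to-0-odd} legitimizes the inversion of \eqref{u-to-fi-odd} despite the conditional convergence of \eqref{formula-2}, and the Lewis--Zagier odd-case correspondence guarantees that the resulting function on $i\R^+$ indeed extends to an odd $\Gamma$-invariant eigenfunction of $\Delta$ with eigenvalue $q(1-q)$ exhibiting the required cuspidal decay.
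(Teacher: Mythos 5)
Your proposal is correct and follows essentially the same route as the paper's proof: the $y\mapsto 1/y$ antisymmetry read off from the explicit form of \eqref{la-u-odd}, the Maass determination theorem for odd forms via their normal derivative on $i\R^{+}$, and the Lewis--Zagier odd correspondence transported through the Hankel transform \eqref{u-to-fi-odd}. (Incidentally, your version of the symmetry, $u_x(iy)=-y^{-2}u_x(i/y)$, is the one actually satisfied by $g(y)-y^{-2}g(1/y)$.)
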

 \begin{proof}
 It follows from the fundamental theorem of Maass (see \cite[Theorem 2 and Exercise 6, p. 234]{Ter}) that odd Maass cusp forms are uniquely determined by their restriction on the imaginary axis, and correspond to coefficients $\{c_{n,q}\}$ which make the series \eqref{four-exp-odd} satisfy $u_x(iy)= - y^2\, u_x(i\frac 1y)$. 
 
 \noindent
By definition we have that the function $v_0(iy)$ in \eqref{la-u-odd} satisfies $v_0(iy)= - y^2\, v_0(i\frac 1y)$. Then the proof is finished by using \eqref{nuova-varfi-odd} and \cite[Chap. II, Section 3]{LeZa}, to show that \eqref{u-to-fi-odd} is a bijection between odd Maass cusp forms and the eigenfunctions of $M-N_q$ as in Proposition \ref{le-fi}.
 \end{proof}

\section{From gpf to Fourier coefficients of Maass forms}\label{sec-courier}

We now use equations \eqref{u-to-fi}, \eqref{eis-to-fi} and \eqref{u-to-fi-odd}, to obtain relations between the coefficients of the power series expansions of the eigenfunctions $\varphi$ introduced in Proposition \ref{le-fi}, and the Fourier coefficients of the Maass forms. In the case of the non-holomorphic Eisenstein series, this approach brings interesting results for the divisor function $\sigma_\alpha(n)$. The results are summarized in Theorem B in the Introduction.

\noindent
The main equality we need is the symmetric Hankel transform of the Bessel functions $K_\nu$, which is given in \cite[vol. II, eq. 8.13.(2), p. 63]{E2}, namely
\begin{equation}\label{h-trasf}
\HHH_\nu \left[ y^{\frac 12}\, K_\nu(ay)\right] (t) = a^{-\nu}\, \frac{t^{\nu+\frac 12}}{t^2+a^2}\, ,\qquad \Re(a)>0\, ,\, \Re(\nu)> -1\, .
\end{equation}

\subsection{Maass cusp forms}

Let $u(x+iy)$ be an even Maass cusp form, we can then use its Fourier series expansion
\[
u(x+i y) = y^{\frac 12}\, \sum_{n\ge 1}\, c_{n,q}\, K_{q-\frac 12}(2\pi ny)\, \cos(2\pi nx)
\]
in 
\[
\varphi(t) \doteq t^{1-q}\, \HHH_{q-\frac 12} [u(i y)](t)\, ,
\]
from which using term-by-term \eqref{h-trasf} with $a=2\pi n$ and $\nu=q-\frac 12$, one obtains (see \cite[equation (2.28)]{LeZa})
\begin{equation}\label{nuova-varfi}
\varphi(t) \doteq \sum_{n\ge 1}\, n^{\frac 12 -q}\, c_{n,q}\, \frac{t}{t^2+(2\pi n)^2} = \sum_{k=0}^\infty\, \frac{(-1)^k}{(4\, \pi^2)^{k+1}}\, L_u\Big(q+2k+\frac 32\Big)\, t^{2k+1}
\end{equation}
where $L_u(\rho)$ is the Dirichlet $L$-series associated to $u$, namely
\begin{equation}\label{ell-series}
L_u(\rho) : = \sum_{n\ge 1}\, c_{n,q}\, n^{-\rho}\, .
\end{equation}
This shows that $\varphi$ is an odd function, and gives a relation between the coefficients $\{a_{k,q}\}$ in
\[
\varphi(t) = \frac{e^{-t}}{1-e^{-t}} \, \sum_{k=1}^{\infty}\, \frac{(-1)^{k} a_{k,q}\, t^{k}}{\Gamma(k+2q)} 
\]
and $\{c_{n,q}\}$. Indeed, recalling that
\[
\frac{e^{-t}}{1-e^{-t}} = \sum_{i\ge 0}\, B_i \, \frac{t^{i-1}}{i!}
\]
where $\{B_i\}$ are the Bernoulli numbers, we find\footnote{See Remark \ref{notazione-cost}.}
\begin{equation}\label{relaz-coefficienti}
\sum_{i\ge -1,\, j\ge 1,\, i+j=n}\, (-1)^j \, \frac{B_{i+1}\, a_{j,q}}{(i+1)!\, \Gamma(j+2q)} \doteq \left\{
\begin{array}{ll}
\frac{(-1)^{\frac{n-1}{2}}}{(2\pi)^{n+1}}\, L_u\Big(q+n+\frac 12\Big)\, , & \text{if $n$ is odd}\\[0.2cm]
0\, , & \text{if $n$ is even}
\end{array} \right.
\end{equation}

\noindent
The equality \eqref{nuova-varfi} can be further used to find an even more direct expression for the $c_{n,q}$ in terms of the $\{a_{n,q}\}$. Following a suggestion first given in \cite{Le} (see also \cite{LeZa}),
let us introduce the {\sl interpolating function} $g_q$, defined as
\begin{equation}\label{gi}
g_q(t)= \sum_{k=1}^{\infty}\, \frac{ a_{k,q} t^{k}}{\Gamma(k+2q)} \equiv \sum_{k=1}^{\infty}\, \frac{ \beta_{k,q} t^{k}}{k!}
\end{equation}
where we have set
$$
\beta_{k,q} = \frac {k! }{\Gamma(k+2q)}\, a_{k,q}
$$
One readily sees that $g_q$ is entire of exponential type and, moreover, we find
\[
g_q(-t)=\sum_{k=1}^{\infty}\, \frac{ (-1)^k \beta_{k,q} t^{k}}{k!} =(e^t-1)\varphi(t)
\]
Now, since (the meromorphic continuation of) $\varphi$ is odd, we have
$g_q(t)=(1-e^{-t})\varphi(t)$. We thus see that $g_q$ satisfies the functional equation
$g_q(-t) = e^t g_q(t)$. 
The name of the function $g_q$ comes from the following fact: taking the limit $t\to \pm 2\pi i n$ in (\ref{gi}), using the first identity of (\ref{nuova-varfi}) and observing that 
$$
\lim_{t\to \pm 2\pi i n} \frac {t(e^t-1)}{t^2+(2\pi n)^2} =\lim_{t\to \pm 2\pi i n} \frac {t(e^t-1)}{(t+2\pi in)(t-2\pi i n)} =\frac 12
$$
we obtain the following formula for the Fourier coefficients of $u$:
\begin{equation}\label{formu}
c_{n,q} \doteq 2 \, n^{q-\frac 12}g_q(\pm 2\pi i n) \quad , \quad n\geq 1
\end{equation}
In order to better understand the consequences of this formula we have to study the behavior  of the entire function $g_q$ on the imaginary axis.
For the moment we just put the above formula in a more explicit form, using (\ref{gi}) and (\ref{formu}), 
\[
c_{n,q} \doteq 2 \, n^{q-\frac 12}\sum_{k=1}^{\infty}\, \frac{\beta_{k,q} }{k!} (\pm 2\pi i n)^{k} \quad , \quad n\geq 1
\]
The symmetry with respect to the change of sign yields 
\[
\sum_{k\geq 0}\, \frac{\beta_{2k+1,q}}{(2k+1)!}\, (2\pi i n)^{2k+1}=0  \quad , \quad \forall n\ge 1
\]
so that we can finally write
\begin{equation}\label{formu2}
c_{n,q} \doteq 2 \, n^{q-\frac 12}\sum_{k=1}^{\infty}\, \frac{(-1)^{k} \beta_{2k,q} }{(2k)!} (2\pi n)^{2k} = 2 \, n^{q-\frac 12}\sum_{k=1}^{\infty}\, \frac{(-1)^{k} a_{2k,q} }{\Gamma(2k+2q)} (2\pi n)^{2k} \quad , \quad n\geq 1
\end{equation}
and finish the proof of Theorem B for even Maass cusp forms. 

\noindent
Finally, we remark that the functional equation $g_q(-t) = e^t g_q(t)$ gives
$$
\sum_{k=1}^{\infty}  (-1)^k \frac {\beta_{k,q}}{k!}  t^{k} = \sum_{k=1}^{\infty}\left(\sum_{\ell=1}^{k}  \frac {\beta_{\ell,q}}{\ell!(k-\ell)!} \right)t^{k}
$$
and therefore
\[
\sum_{\ell=1}^{k}  \binom{k}{\ell} \beta_{\ell,q}=(-1)^k \beta_{k,q}\quad , \quad k\geq 1
\]
which (for $k$ even) is akin to the recursive property of the Bernoulli numbers.

\noindent
In the odd case, everything works similarly. Using \eqref{four-exp-odd} and integrating term-by-term we get the analogous of \eqref{nuova-varfi}, namely
\begin{equation}\label{nuova-varfi-odd}
\begin{aligned}
\varphi(t) \doteq &\ t^{-q} \sum_{n\ge 1}\, n\, c_{n,q}\, \HHH_{q-\frac 12} \Big[\chi_{\frac 12}(y)\, K_{q-\frac 12}(2\pi n y)\Big] (t) \\[0.2cm]
\doteq &\ \sum_{n\ge 1}\, n^{-q-\frac 12}\, c_{n,q} \, \Big(\frac{1}{(\frac{t}{2\pi n})^2+1} -1\Big) + L_u\Big(q+\frac 12 \Big)  =  \sum_{k=0}^\infty\, \frac{(-1)^k}{(2\pi)^{2k}}\, L_u\Big(q+2k +\frac 12 \Big)\, t^{2k}
\end{aligned}
\end{equation}
where in the second line we have used \cite[vol. II, eq. 8.13.(2), p. 63]{E2}, the definition of the $L$-series $L_u$ in \eqref{ell-series} and their analytic extensions. It follows that $\varphi$ is an even function and the analogous of \eqref{relaz-coefficienti} and \eqref{formu2} are immediate.

\subsection{Non-holomorphic Eisenstein series}

Analogous computations can be performed starting from \eqref{eis-to-fi} and applying \eqref{h-trasf} term-by-term. In this case however the coefficients are known, hence we obtain explicit equalities. Letting
\[
\tilde \varphi (t) := \frac{1}{\Gamma(2q)}\, \frac{1}{e^{t}-1}\, \sum_{k\ge 0} A_{k,q}\, \frac{t^{k}}{k!}
\]
with
\[
A_{k,q} = \frac{1}{k+1}\, \sum_{i=0}^{k}\, {k+1\choose i}\, B_{i}\, \zeta(2q-1+i)\, , \qquad k\ge 0\, ,
\]
where $B_i$ are the Bernoulli numbers, we first rewrite it with
\[
\tilde A_{k,q} := A_{k,q} - \frac{\zeta(2q-1)}{k+1} + \frac{\zeta(2q)}{2} = \frac{1}{k+1}\, \sum_{i=2}^{k}\, {k+1\choose i}\, B_{i}\, \zeta(2q-1+i)\, \qquad k\ge 2\, ,
\]
from which we obtain
\[
\tilde \varphi(t) = \frac{\zeta(2q-1)}{\Gamma(2q)}\, \frac 1t - \frac{\zeta(2q)}{2\, \Gamma(2q)} + \frac{1}{\Gamma(2q)}\, \frac{1}{e^{t}-1}\, \sum_{k\ge 2} \tilde A_{k,q}\, \frac{t^{k}}{k!}\, .
\]
Then writing
\[
\tilde E(iy,q) =  \frac{\pi^{\frac 12}\, \Gamma(q-\frac 12)}{\Gamma(q)}\, \zeta(2q-1)\, y^{1-q} - \zeta(2q)\, y^{-q} + \frac{4\, \pi^q}{\Gamma(q)}\, y^{\frac 12}\, \sum_{n\ge 1}\, n^{\frac 12-q}\, \sigma_{2q-1}(n) \, K_{q-\frac 12}(2\pi ny)
\]
where $\sigma_\alpha(n) := \sum_{d|n}\, d^\alpha$ is the divisor function, equality \eqref{eis-to-fi} gives
\[
\tilde \varphi(t) = \frac{\zeta(2q-1)}{\Gamma(2q)}\, \frac 1t - \frac{\zeta(2q)}{2\, \Gamma(2q)} + \frac{2}{\Gamma(2q)}\, \sum_{n\ge 1}\, n^{1-2q}\, \sigma_{2q-1}(n)\, \frac{t}{t^2+(2\pi n)^2}\, ,
\]
where we have used \cite[vol. II, eq. 8.5.(7), p. 22]{E2}, \eqref{h-trasf} and the equality $2^{1-2q}\, \pi^{\frac 12}\, \Gamma(2q) = \Gamma(q)\, \Gamma(q+\frac 12)$, hence
\begin{equation}\label{nuova-varfi-eis}
\frac{1}{e^{t}-1}\, \sum_{k\ge 2} \tilde A_{k,q}\, \frac{t^{k}}{k!} = 2\, \sum_{n\ge 1}\, n^{1-2q}\, \sigma_{2q-1}(n)\, \frac{t}{t^2+(2\pi n)^2} = 2\, \sum_{k=0}^\infty\, \frac{(-1)^k}{(4\, \pi^2)^{k+1}}\, L_\sigma\Big(2q+2k+1\Big)\, t^{2k+1}
\end{equation}
where now $L_\sigma(\rho)$ is the Dirichlet $L$-series
\[
L_\sigma(\rho) : = \sum_{n\ge 1}\, \sigma_{2q-1}(n)\, n^{-\rho}\, .
\]
Arguing as above, from \eqref{nuova-varfi-eis} we obtain the analogous of \eqref{relaz-coefficienti} and \eqref{formu2}
\begin{equation}\label{relaz-coeff-eis}
\sum_{i\ge -1,\, j\ge 1,\, i+j=n}\, \frac{B_{i+1}\, \tilde A_{j,q}}{(i+1)!\, j!} = \left\{
\begin{array}{ll}
\frac{(-1)^{\frac{n-1}{2}}}{2^n \, \pi^{n+1}}\, L_\sigma\Big(2q+n\Big)\, , & \text{if $n$ is odd}\\[0.2cm]
0\, , & \text{if $n$ is even}
\end{array} \right.
\end{equation}
and
\begin{equation}\label{new-div-fun}
\sigma_{2q-1}(n) = n^{2q-1}\, \sum_{k=1}^{\infty}\, \frac{(-1)^k\, \tilde A_{2k,q}}{(2k)!}\, (2\pi n)^{2k}\, , \qquad \forall\, n\ge 1\, .
\end{equation}
for all $q$ with $\xi >0$. The convergence on the right hand side of \eqref{new-div-fun} is absolutely since $|\tilde A_{k,q}| \le \text{const}\, k^2\, \zeta(2\xi+1)$, as we prove in Theorem \ref{zagier-funct-ser}. This finishes the proof of Theorem B.

\noindent
From \eqref{new-div-fun} we can obtain a new formulation also for the partial sums of $\frac{\sigma_{2q-1}(n)}{n^{2q-1}}$. Using 
\[
\sum_{n=1}^N\, n^{2k} = \frac{1}{2k+1} ( B_{2k+1}(N) - B_{2k+1} )
\] 
where $B_n(x)$ are the Bernoulli polynomials, we get
\[
\sum_{n=1}^N\, \frac{\sigma_{2q-1}(n)}{n^{2q-1}} = \sum_{k=1}^\infty\, \frac{(-1)^k\, (2\pi)^{2k}\, \tilde A_{2k,q}}{(2k+1)!}\, \Big(B_{2k+1}(N) - B_{2k+1}\Big)\, .
\]
Notice that the function
\[
F(x) := \sum_{k=1}^\infty\, \frac{(-1)^k\, (2\pi)^{2k}\, \tilde A_{2k,q}}{(2k+1)!}\, \Big(B_{2k+1}(x) - B_{2k+1}\Big)
\]
is uniformly convergent on any compact interval of the real line. 

\begin{rem}
Equation \eqref{relaz-coeff-eis} is equivalent to the formulas $\zeta(2k) = (-1)^{k+1} \, \frac{B_{2k} (2\pi)^{2k}}{2 (2k)!}$ and $L_\sigma(2q+2k+1) = \zeta(2q+2k+1)\, \zeta(2k+2)$, with $k\ge 0$. In particular assuming one of the two formulas and \eqref{relaz-coeff-eis}, one obtains the other formula.
\end{rem}

\begin{rem} \label{rem-ram}
By using the expansion
\[
\tilde A_{2k,q} =  \sum_{\ell\ge 2}\, \frac{S_{2k}(\ell) - \frac{1}{2k+1}\, \ell^{2k+1} - \frac 12 \, \ell^{2k}}{\ell^{2q+2k}} - \frac{1}{2k+1} + \frac 12
\]
for $\Re(q)>1$ one can change the order of summation in \eqref{new-div-fun}, and use the expression $c_r(n) = \sum_{(i,r)=1\, , \, i\le r}\, \cos\left( 2\pi \frac i r n\right)$ for the Ramanujan's sums to write
\[
\frac{\sigma_{2q-1}}{n^{2q-1}} =  \sum_{k=1}^{\infty}\, \frac{(-1)^k\, \tilde A_{2k,q}}{(2k)!}\, (2\pi n)^{2k} = T_1 + T_2 + T_3 + T_4 + T_5
\]
where
\[
T_2 = - \sum_{k=1}^{\infty}\, \left( \frac{1}{2k+1}\, \sum_{\ell\ge 2}\, \frac{\ell^{2k+1}}{\ell^{2q+2k}} \right) \, \frac{(-1)^k}{(2k)!}\, (2\pi n)^{2k} = - \Big( \zeta(2q-1) -1 \Big)\, \frac{\sin t -t}{t}\Big|_{t=2\pi n} = \zeta(2q-1) -1
\]
\[
T_3 = - \sum_{k=1}^{\infty}\, \left( \frac{1}{2}\, \sum_{\ell\ge 2}\, \frac{\ell^{2k}}{\ell^{2q+2k}} \right) \, \frac{(-1)^k}{(2k)!}\, (2\pi n)^{2k} = - \frac 12 \Big(\zeta(2q)-1 \Big) \, (\cos t - 1)\Big|_{t=2\pi n} = 0
\]
\[
T_4 = - \sum_{k=1}^{\infty}\, \frac{1}{2k+1}\, \frac{(-1)^k}{(2k)!}\, (2\pi n)^{2k} = - \frac{\sin t -t}{t}\Big|_{t=2\pi n} = 1
\]
\[
T_5 = \sum_{k=1}^{\infty}\, \frac{1}{2}\, \frac{(-1)^k}{(2k)!}\, (2\pi n)^{2k} = (\cos t - 1)\Big|_{t=2\pi n} = 0
\]
and for the first term one gets
\[
T_1 = \sum_{k=1}^{\infty}\, \left(  \sum_{\ell\ge 2}\, \sum_{j=1}^\ell\, \frac{j^{2k}}{\ell^{2q+2k}} \right)\, \frac{(-1)^k}{(2k)!}\, (2\pi n)^{2k} = \sum_{\ell\ge 2}\, \frac{1}{\ell^{2q}}\, \sum_{j=1}^\ell\, \left( \sum_{k=1}^{\infty}\, \frac{(-1)^k}{(2k)!} \left( 2\pi\, \frac j \ell\, n\right)^{2k} \right) =
\]
\[
= \sum_{\ell\ge 2}\, \frac{1}{\ell^{2q}}\, \sum_{j=1}^\ell\, \Big( \cos \left( 2\pi\, \frac j \ell\, n\right) -1 \Big) = -\zeta(2q-1)+1 + \sum_{\ell\ge 2}\, \frac{1}{\ell^{2q}}\, \sum_{j=1}^\ell\, \cos \left( 2\pi\, \frac j \ell\, n\right) = 
\]
\[
= -\zeta(2q-1)+ \sum_{\ell\ge 1}\, \frac{1}{\ell^{2q}}\, \sum_{j=1}^\ell\, \cos \left( 2\pi\, \frac j \ell\, n\right) = -\zeta(2q-1)+ \sum_{\ell\ge 1}\, \frac{1}{\ell^{2q}}\, \sum_{r|\ell}\, c_r(n) = 
\]
\[
= -\zeta(2q-1)+ \zeta(2q)\, \sum_{\ell\ge 1}\, \frac{c_\ell(n)}{\ell^{2q}}
\]
where in the last equality we have used Dirichlet multiplication to write $\sum_{r|\ell}\, c_r(n) = (c(n) \star u)(\ell)$, where $u(k)=1$ for all $k\ge 1$. We have thus obtained Ramanujan's expansion for the divisor function for $\Re(q)>1$ (see \cite{ram}), namely
\[
\frac{\sigma_{2q-1}}{n^{2q-1}} = \sum_{k=1}^{\infty}\, \frac{(-1)^k\, \tilde A_{2k,q}}{(2k)!}\, (2\pi n)^{2k} = \zeta(2q)\, \sum_{\ell\ge 1}\, \frac{c_\ell(n)}{\ell^{2q}}\, ,
\]
We remark that Ramanujan expansion is known to hold only for $\Re(q)> \frac 12$, where the Dirichlet series is absolutely convergent, for $q=\frac 12$ where the convergence of the Dirichlet series is equivalent to the Prime Number Theorem, and can be extended to $\Re(q) \in \left( \frac 14, \frac 12\right]$ assuming Riemann Hypothesis. 
On the other hand, expansion
\eqref{new-div-fun} holds for $\Re(q)>0$.
\end{rem}

\section{Power series expansions for Maass forms on the imaginary axis} \label{sec-power}

Equations \eqref{legendre-exp} and \eqref{fi-for-psi} provide series expansions for Maass forms in terms of the Legendre functions $\PPP_{\nu}^{\mu}$. Moreover, in the case of non-cusp forms we have explicit expressions for the coefficients $b_{n,q}$ of the series. We now use properties of the Legendre functions to obtain different expansions in terms of rational functions.

\noindent
The functions involved are $\PPP_{\nu}^{\mu}$ with $\nu= n+q-\frac 12$ and $\mu=-q+\frac 12$, so that $\mu + \nu = n$ is an integer. Consequently, using \cite[eq. 14.3.11, p. 354]{nist}, we get
\[
\PPP_{n+q-\frac 12}^{-q+\frac 12}(t) = \left\{
\begin{array}{ll}
(-1)^{\frac n2}\, \frac{2^{-q+\frac 12} \Gamma(\frac{n+1}{2})}{\Gamma(\frac{n+1}{2}+q)} \, (1-t^{2})^{\frac q2 -\frac 14}\, \hyperbold{2}{1}\left( -\frac n2,\, \frac n2 +q;\, \frac 12;\, t^{2}\right), & \text{if $n$ is even}\\[0.3cm]
(-1)^{\frac{n-1}{2}}\, \frac{2^{-q+\frac 12} \Gamma(\frac{n}{2}+1)}{\Gamma(\frac{n}{2}+q)} \, t\, (1-t^{2})^{\frac q2 -\frac 14}\, \hyperbold{2}{1}\left( -\frac{n-1}{2},\, \frac{n+1}{2} +q;\, \frac 32;\, t^{2}\right), & \text{if $n$ is odd}
\end{array}
\right.
\] 
where $\hyperbold{2}{1}$ is the scaled hypergeometric function. Moreover, since the first variable of $\hyperbold{2}{1}$ is in both case a non-positive integer, then the hypergeometric function is a polynomial in $t^{2}$, more precisely for $k\in \N$ and $c\not=0,-1,-2,\dots$, it holds
$$
\hyperbold{2}{1}(-k,\, b;\, c;\, t^{2}) = \sum_{j=0}^{k}\, (-1)^{j}\, {k\choose j}\, \frac{\Gamma(b+j)}{\Gamma(b)\, \Gamma(c+j)}\, t^{2j}\, .
$$
It follows that for the terms in \eqref{legendre-exp} and \eqref{fi-for-psi}, we get
$$
\frac{y^{\frac 12}\, \PPP^{-q+\frac 12}_{n+q-\frac 12} \left( \frac{1}{(1+y^{2})^{\frac 12}} \right) + y^{n+q}\, \PPP^{-q+\frac 12}_{n+q-\frac 12} \left( \frac{y}{(1+y^{2})^{\frac 12}} \right)}{(1+y^{2})^{\frac n2 + \frac q2 + \frac 14}} =
$$
\begin{equation}\label{formulina}
\left\{
\begin{array}{ll}
(-1)^{\frac n2}\, \frac{2^{-q+\frac 12} \Gamma(\frac{n+1}{2})}{\Gamma(\frac{n+1}{2}+q)\, \Gamma(\frac n2 + q)} \,  \left( \frac y{1+y^2}\right)^q \, \sum_{j=0}^{n/2}\, (-1)^{j}\, {n/2\choose j}\, \frac{\Gamma(\frac n2 + q+j)}{\Gamma(\frac 12+j)}\, \frac{1+y^{n+2j}}{(1+y^{2})^{\frac n2 +j}}\, , & \text{if $n$ is even}\\[0.3cm]
(-1)^{\frac{n-1}2}\, \frac{2^{-q+\frac 12} \Gamma(\frac{n}{2}+1)}{\Gamma(\frac{n+1}{2}+q)\, \Gamma(\frac n2 + q)} \,  \left( \frac y{1+y^2}\right)^q \, \sum_{j=0}^{(n-1)/2}\, (-1)^{j}\, {(n-1)/2\choose j}\, \frac{\Gamma(\frac{n+1}2 + q+j)}{\Gamma(\frac 32+j)}\, \frac{1+y^{n+1+2j}}{(1+y^{2})^{\frac{n+1}2 +j}}\, , & \text{if $n$ is odd}
\end{array}
\right.
\end{equation}
Notice that each term of the finite sums is invariant with respect to the transformation $y\mapsto \frac 1y$. 

\noindent
We now substitute \eqref{formulina} into \eqref{legendre-exp} and \eqref{fi-for-psi} and get, with $\alpha_{n,q}$ being equal to $a_{n,q}$ and $b_{n,q}$ respectively,
\[
\sum_{n=0}^\infty\, (-1)^n \alpha_{n,q}\, \ \frac{y^{\frac 12}\, \PPP^{-q+\frac 12}_{n+q-\frac 12} \left( \frac{1}{(1+y^{2})^{\frac 12}} \right) + y^{n+q}\, \PPP^{-q+\frac 12}_{n+q-\frac 12} \left( \frac{y}{(1+y^{2})^{\frac 12}} \right)}{(1+y^{2})^{\frac n2 + \frac q2 + \frac 14}} =
\]
\[
= \sum_{k=0}^\infty\, \alpha_{2k,q}\ (-1)^k\, \frac{2^{-q+\frac 12} \Gamma(\frac{2k+1}{2})}{\Gamma(\frac{2k+1}{2}+q)\, \Gamma(k + q)} \,  \left( \frac y{1+y^2}\right)^q \, \sum_{j=0}^{k}\, (-1)^{j}\, {k\choose j}\, \frac{\Gamma(k + q+j)}{\Gamma(\frac 12+j)}\, \frac{1+y^{2(k+j)}}{(1+y^{2})^{k +j}} +
\]
\[
+ \sum_{h=0}^\infty\, (-1)\,  \alpha_{2h+1,q}\ (-1)^{h}\, \frac{2^{-q+\frac 12} \Gamma(\frac{2h+1}{2}+1)}{\Gamma(h+1+q)\, \Gamma(\frac {2h+1}{2} + q)} \,  \left( \frac y{1+y^2}\right)^q \, \sum_{j=0}^{h}\, (-1)^{j}\, {h\choose j}\, \frac{\Gamma(h+ q+j+1)}{\Gamma(\frac 32+j)}\, \frac{1+y^{2(h+1+j)}}{(1+y^{2})^{h+1+j}}
\]
where we have split the sum in one with even indices and one with odd indices. At this point we can group together all the coefficients multiplying terms of the form $\frac{1+y^{2s}}{(1+y^{2})^{s}}$ and get
\[
\sum_{n=0}^\infty\, (-1)^n \alpha_{n,q} \ \frac{y^{\frac 12}\, \PPP^{-q+\frac 12}_{n+q-\frac 12} \left( \frac{1}{(1+y^{2})^{\frac 12}} \right) + y^{n+q}\, \PPP^{-q+\frac 12}_{n+q-\frac 12} \left( \frac{y}{(1+y^{2})^{\frac 12}} \right)}{(1+y^{2})^{\frac n2 + \frac q2 + \frac 14}} = 2^{-q+\frac 12} \left( \frac y{1+y^2}\right)^q \sum_{s=0}^{\infty} (-1)^s\, \eta_{s,q}\, \frac{1+y^{2s}}{(1+y^{2})^{s}}
\]
where the coefficients $\eta_{s,q}$ are a given by a finite sum. In particular
\begin{equation}\label{davvero}
\eta_{s,q}:= \left\{ \begin{array}{ll}
\sum_{i=0}^{s}\, \alpha_{s+i,q}\, \gamma_{s+i,q}\, \delta_{s+i, \frac s2 - \lfloor \frac{i+1}{2} \rfloor,q}\, , & \text{ $s$ even}\\[0.2cm]
\sum_{i=0}^{s}\, \alpha_{s+i,q}\, \gamma_{s+i,q}\, \delta_{s+i, \frac{s-1}{2} - \lfloor \frac{i}{2} \rfloor,q}\, , & \text{ $s$ odd}
\end{array} \right.
\end{equation}
with 
\[
\gamma_{2k,q} =  \frac{ \Gamma(\frac{2k+1}{2})}{\Gamma(\frac{2k+1}{2}+q)\, \Gamma(k + q)} \quad \text{and} \quad  \gamma_{2k+1,q} =  \frac{\Gamma(\frac{2k+1}{2}+1)}{\Gamma(\frac{2k+1}{2}+q)\, \Gamma(k + q+1)}
\]
\[
\delta_{2k,j,q} =  {k\choose j}\, \frac{\Gamma(k + q+j)}{\Gamma(\frac 12+j)} \quad \text{and} \quad \delta_{2k+1,j,q} =  {k\choose j}\, \frac{\Gamma(k + q+j+1)}{\Gamma(\frac 32+j)}
\]
Introducing the notation $\beta_{n,q} := (-1)^n\, \frac{n!\, \Gamma(2q)}{\Gamma(n+2q)}\, \alpha_{n,q}$ in \eqref{davvero} we get
\begin{equation} \label{davverissimo}
\eta_{s,q} = (-1)^s\, 2^s\, \frac{\Gamma(s+q)}{s!\, \Gamma(q+\frac 12)\, \Gamma(q)} \ \sum_{i=0}^s\, (-1)^i\, 2^{-i}\, \beta_{s+i,q}\, {s\choose i}\, .
\end{equation}
We have thus proved
\begin{proposition}\label{prop:cusp-rat}
An even Maass cusp form with eigevalue $q(1-q)$, can be formally written when restricted to the imaginary axis as
\[
u(iy) = 2^{-q+\frac 12} \left( \frac y{1+y^2}\right)^q \sum_{s=0}^{\infty} (-1)^s\, \eta_{s,q}\, \frac{1+y^{2s}}{(1+y^{2})^{s}}
\]
with $\eta_{s,q}$ as in \eqref{davverissimo} and $\beta_{n,q} = (-1)^n\, \frac{n!\, \Gamma(2q)}{\Gamma(n+2q)}\, a_{n,q}$, where $\{a_{n,q}\}$ is given in \eqref{fi-power-b}.
\end{proposition}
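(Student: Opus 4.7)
The plan is to deduce Proposition \ref{prop:cusp-rat} directly from the Legendre-series representation of $u(iy)$ given in Corollary \ref{cor-maass-even}, by rewriting each Legendre function as a finite polynomial in $\frac{1}{1+y^2}$ times the factor $\left(\frac{y}{1+y^2}\right)^q$ and then reorganizing the resulting double sum. No new analytic input is needed; the argument is essentially algebraic, combined with the already-established uniform convergence of \eqref{legendre-exp} on compact subintervals of $(0,\infty)$.

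First I would invoke Corollary \ref{cor-maass-even} to express $u(iy)$ as the sum \eqref{legendre-exp} with $\alpha_{n,q}=a_{n,q}$. For each $n$, the Legendre function $\PPP^{-q+\frac12}_{n+q-\frac12}$ has $\mu+\nu=n\in\N$, so formula \eqref{legendre-semplice} applies and expresses it, up to the factor $(1-t^2)^{\frac{q}{2}-\frac14}$, as a hypergeometric $\hyperbold{2}{1}$ whose first parameter is a non-positive integer, hence a polynomial in $t^2$ of degree $\lfloor n/2\rfloor$. Evaluating at $t=\frac{1}{\sqrt{1+y^2}}$ and $t=\frac{y}{\sqrt{1+y^2}}$ turns the factor $(1-t^2)^{\frac{q}{2}-\frac14}$ into $\left(\frac{y^2}{1+y^2}\right)^{\frac{q}{2}-\frac14}$ in both cases, while $t^{2j}$ contributes $(1+y^2)^{-j}$ and $y^{2j}(1+y^2)^{-j}$ respectively. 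Pulling out the common prefactor $(y/(1+y^2))^q$ then yields the formula \eqref{formulina}, in which every summand is manifestly invariant under $y\mapsto 1/y$ and has the form $\frac{1+y^{2s}}{(1+y^2)^s}$ for some $s$.

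Next I would substitute \eqref{formulina} into the Legendre expansion, splitting the outer sum $\sum_{n\ge 0}$ according to the parity of $n$ (writing $n=2k$ or $n=2h+1$). After the substitution we have a double sum over $(k,j)$ and $(h,j)$ whose summand depends on $y$ only through the building block $\frac{1+y^{2s}}{(1+y^2)^s}$, with $s=k+j$ in the even case and $s=h+1+j$ in the odd case. The main bookkeeping step is to swap the order of summation and collect, for each fixed $s\ge 0$, the contributions coming from all $(n,j)$ with the relevant sum equal to $s$. This produces \eqref{finale} with a coefficient $\eta_{s,q}$ given by a finite sum; matching parities of $n=s+i$ with the index range for $j$ yields precisely the case split of \eqref{davvero}, with the auxiliary quantities $\gamma_{n,q}$ and $\beta_{n,j,q}$ read off from the hypergeometric coefficients and the prefactors in \eqref{legendre-semplice}.

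Finally I would pass from the raw coefficients $\alpha_{n,q}=a_{n,q}$ to the normalized coefficients $\beta_{n,q}=(-1)^n\,\frac{n!\,\Gamma(2q)}{\Gamma(n+2q)}\,a_{n,q}$ introduced in the statement; substituting $\alpha_{n,q}=(-1)^n\,\frac{\Gamma(n+2q)}{n!\,\Gamma(2q)}\,\beta_{n,q}$ into \eqref{davvero} and simplifying the ratios of Gamma functions with the duplication formula $\Gamma(2q)=\pi^{-\frac12}2^{2q-1}\Gamma(q)\Gamma(q+\frac12)$ yields the closed form \eqref{davverissimo}. The expected main obstacle is precisely this last simplification: keeping track of the combinatorial factors $\binom{s}{i}$, the powers of $2$, and the ratios $\Gamma(s+q)/\Gamma(s+i+q)$ across the even/odd split, and checking that both cases merge into the single clean expression in \eqref{davverissimo}. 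Since \eqref{legendre-exp} already converges uniformly on compact subintervals of $(0,\infty)$, the rearrangement into \eqref{finale} is justified at the level of formal power series in $\frac{1}{1+y^2}$, which is exactly the sense in which the proposition is stated.
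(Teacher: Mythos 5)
Your proposal is correct and follows essentially the same route as the paper: starting from the Legendre expansion \eqref{legendre-exp} of Corollary \ref{cor-maass-even}, reducing each $\PPP^{-q+\frac 12}_{n+q-\frac 12}$ via \eqref{legendre-semplice} to a terminating hypergeometric polynomial, obtaining \eqref{formulina}, then splitting by parity of $n$ and regrouping the terms $\frac{1+y^{2s}}{(1+y^2)^s}$ to arrive at \eqref{finale}, \eqref{davvero} and finally \eqref{davverissimo}. The only cosmetic difference is that you make explicit the use of the duplication formula in the final Gamma-function simplification, which the paper leaves implicit.
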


When writing the same expansion for the non-cusp forms $U(iy)$ as in \eqref{fi-for-psi}, we can use the explicit expression for the coefficient $\{b_{n,q}\}$, which are defined in terms of the $\{A_{n,q}\}$ of Theorem \ref{zagier-funct-ser}. We first get
\begin{equation}\label{eis-series}
U(iy) = \zeta(2q)\, \Big( y^{q} + y^{-q} \Big) + 2\left( \frac{y}{1+y^2} \right)^q \left[ - \zeta(2q)\,  + \sum_{s=0}^\infty\, 2^s\, \frac{\Gamma(s+q)}{s!\, \Gamma(q)} \left( \sum_{i=0}^s\, {s\choose i}\, \frac{(-1)^i\, A_{s+i,q}}{2^i} \right) \, \frac{1+y^{2s}}{(1+y^{2})^{s}} \right]
\end{equation}
then recalling
\[
A_{n,q} = \frac{1}{n+1}\, \sum_{\ell=0}^{n}\, {n+1 \choose \ell}\, B_\ell\, \zeta(2q-1+\ell)\, ,
\]
we obtain for $s\ge 1$
\[
\sum_{i=0}^s\, {s\choose i}\, \frac{(-1)^i\, A_{s+i,q}}{2^i} = \sum_{i=0}^s\, \sum_{\ell=0}^{s+i}\, B_\ell\, \zeta(2q-1+\ell)\, {s+i+1\choose \ell}\, {s\choose i}\, \frac{(-1)^i}{2^i\, (s+i+1)} =
\]
\[
= \sum_{\ell=0}^s\, \left( \sum_{i=0}^s\, {s+i+1\choose \ell}\, {s\choose i}\, \frac{(-1)^i}{2^i\, (s+i+1)} \right)\, B_\ell\, \zeta(2q-1+\ell) +
\]
\[
+  \sum_{\ell=s+1}^{2s}\, \left( \sum_{i=\ell-s}^s\, {s+i+1\choose \ell}\, {s\choose i}\, \frac{(-1)^i}{2^i\, (s+i+1)} \right)\, B_\ell\, \zeta(2q-1+\ell)
\]
Moreover for $\ell\ge 2$
\[
\sum_{i=0}^s\, {s+i+1\choose \ell}\, {s\choose i}\, \frac{(-1)^i}{2^i\, (s+i+1)} = \frac 1 \ell \, {s\choose \ell-1}\, \sum_{i=0}^s\, {s\choose i}\, \frac{\Gamma(s+i+1)\, \Gamma(s+2-\ell)}{\Gamma(s+1)\, \Gamma(s+i+2-\ell)}\, \left( - \frac 12\right)^i =
\]
\[
= \frac 1 \ell \, {s\choose \ell-1}\, \hyper{2}{1}\Big(-s, s+1; s+2-\ell; \frac 12\Big) = \frac 1 \ell \, {s\choose \ell-1}\, \frac{\pi^{\frac 12}\,\, \Gamma(s+2-\ell)}{2^{s+1-\ell}\, \Gamma(1-\frac \ell 2)\, \Gamma(s+\frac{3-\ell}{2})}
\]
where in the last equality we have used \cite[eq. 15.4.30, p. 387]{nist}. Notice that the last expression vanishes for $\ell$ even because of the poles of the term $\Gamma(1-\frac \ell 2)$, and since $B_\ell =0$ for $\ell \ge 2$ and odd, we obtain
\[
\sum_{\ell=0}^s\, \left( \sum_{i=0}^s\, {s+i+1\choose \ell}\, {s\choose i}\, \frac{(-1)^i}{2^i\, (s+i+1)} \right)\, B_\ell\, \zeta(2q-1+\ell) =
\]
\[ =  B_0 \, \zeta(2q-1)\, \sum_{i=0}^s\, {s\choose i}\, \frac{(-1)^i}{2^i\, (s+i+1)} + B_1\, \zeta(2q)\, \sum_{i=0}^s\, {s\choose i}\, \frac{(-1)^i}{2^i}= \frac{\pi^{\frac 12}}{2^{s+1}}\, \frac{\Gamma(s+1)}{\Gamma(s+\frac 32)}\, \zeta(2q-1) - \frac{1}{2^{s+1}}\, \zeta(2q)\, .
\]
Similarly, letting $k=\ell -s\ge 1$, we have
\[
\sum_{i=k}^s\, {s+i+1\choose s+k}\, {s\choose i}\, \frac{(-1)^i}{2^i\, (s+i+1)} = \frac{(-1)^k}{2^{k-1}\, (s+k)}\, {s\choose k-1}\, - \frac{(-1)^k\, \pi^{\frac 12}\, (s+k+1)}{(s+k)\, (s-k+1)}\, {s\choose k}\, \frac{\Gamma(k+1)}{\Gamma(\frac{k-s}{2})\, \Gamma(\frac{s+k+3}{2})} 
\]
whose second term vanishes for $s-k$ even, that is for $\ell = s-k +2k$ even, and since $B_\ell =0$ for $\ell \ge 2$ and odd, we obtain for $s\ge 1$
\[
\sum_{\ell=s+1}^{2s}\, \left( \sum_{i=\ell-s}^s\, {s+i+1\choose \ell}\, {s\choose i}\, \frac{(-1)^i}{2^i\, (s+i+1)} \right)\, B_\ell\, \zeta(2q-1+\ell) = 
\]
\[ 
= \sum_{k=1}^{s}\, \left( \sum_{i=k}^s\, {s+i+1\choose s+k}\, {s\choose i}\, \frac{(-1)^i}{2^i\, (s+i+1)} \right)\, B_{s+k}\, \zeta(2q-1+s+k) =
\]
\[
= \sum_{k=1}^{s}\, \frac{(-1)^k}{2^{k-1}\, (s+k)}\, {s\choose k-1}\, B_{s+k}\, \zeta(2q-1+s+k)\, .
\]
We have thus shown that for $s\ge 1$
\begin{equation}\label{ultimi-coeff}
\begin{aligned}
\sum_{i=0}^s\, {s\choose i}\, \frac{(-1)^i\, A_{s+i,q}}{2^i} = & \frac{\pi^{\frac 12}}{2^{s+1}}\, \frac{\Gamma(s+1)}{\Gamma(s+\frac 32)}\, \zeta(2q-1) - \frac{1}{2^{s+1}}\, \zeta(2q) + \\[0.3cm]
& + \sum_{k=1}^{s}\, \frac{(-1)^k}{2^{k-1}\, (s+k)}\, {s\choose k-1}\, B_{s+k}\, \zeta(2q-1+s+k)
\end{aligned}
\end{equation}
Substituting \eqref{ultimi-coeff} in \eqref{eis-series}, and using the identities
\[
\sum_{s=0}^\infty\, \frac{\Gamma(s+q)}{s!\, \Gamma(q)}\, z^s = (1-z)^{-q}\quad \text{and} \quad \sum_{s=0}^\infty\, \frac{\Gamma(s+q)}{\Gamma(s+\frac 32)\, \Gamma(q)}\, z^s = \frac{2}{\pi^{\frac 12}}\, \hyper{2}{1}\left(1,q\, ;\, \frac 32\, ;\, z\right)\, ,
\]
we obtain that the Eisenstein series $E(iy,q)$ defined in \eqref{eisen-q} can be formally written as
\begin{equation} \label{forma-eis}
\begin{aligned}
E(iy,q) = &\,  2 \, \left(\frac{y}{1+y^2}\right)^q \, \left[ \zeta(2q-1)\, \hyper{2}{1}\left(1,q\, ;\, \frac 32\, ;\, \frac{1}{1+y^2}\right) + \zeta(2q-1)\, \hyper{2}{1}\left(1,q\, ;\, \frac 32\, ;\, \frac{y^2}{1+y^2}\right) \right] + \\[0.3cm]
& + 4\, \left(\frac{y}{1+y^2}\right)^q \, \sum_{s=1}^\infty\, 2^s\, \frac{\Gamma(s+q)}{s!\, \Gamma(q)} \left( \sum_{k=1}^s\, {s\choose k-1}\, \frac{(-1)^k}{2^k\, (s+k)}\, B_{s+k}\, \zeta(2q-1+s+k) \right) \, \frac{1+y^{2s}}{(1+y^{2})^{s}}
\end{aligned}
\end{equation}

\noindent
We point out that in the previous expression there is no dependence on $\zeta(2q)$, hence there is no pole at $q=\frac 12$,  the only pole being at $q=1$ due to  the term $\zeta(2q-1)$.

\begin{theorem}\label{forma-eis-conv}
For any $q=\xi +i \eta$ with $\xi >0$, the series in \eqref{forma-eis} is uniformly convergent for $y$ in any compact interval in $(0,+\infty)$.
\end{theorem}

\begin{proof}
We show that it is enough to prove that for all $q=\xi +i \eta$ with $\xi >0$ it holds
\[
\sum_{k=1}^s\, {s\choose k-1}\, \frac{(-1)^k}{2^k\, (s+k)}\, B_{s+k}\, \zeta(2q-1+s+k) = O\left(\frac{s^6}{2^s}\right)\, .
\]
This estimate is proved in four steps. In the fifth and last step we conclude the proof of the theorem.

\vskip 0.2cm
\noindent \emph{Step 1}. We first show that for $s\ge 2$ and even
\begin{equation}\label{primo-passo-eis}
2\, \frac{(-1)^{\frac s2 -1}}{(2\pi\ell)^s}\, \sum_{\stackrel{k=1}{\text{$k$ even}}}^s\, {s\choose k-1}\, \frac{(-1)^{\frac k2} (s+k-1)!}{(4\pi \ell)^k} = \frac{s!}{2\, (2\pi)^s\, \ell^{s+\frac 12}}\, J_{s+\frac 12} (2\pi \ell)\, , \quad \forall\, \ell \ge 1
\end{equation}

For $k$ even, we can write $(-1)^{\frac k2}$ as $i^k$ and let $k'=k-1$, hence
\[
\sum_{\stackrel{k=1}{\text{$k$ even}}}^s\, {s\choose k-1}\, \frac{(-1)^{\frac k2} (s+k-1)!}{(4\pi \ell)^k} = \sum_{\stackrel{k=0}{\text{$k$ odd}}}^{s-1}\, {s\choose k}\, (s+k)! \left(\frac{i}{4\pi \ell}\right)^{k+1} =
\]
\[
= \frac 12\,  \frac{i}{4\pi \ell}\, \Big( \sum_{k=0}^{s}\, {s\choose k}\, (s+k)! \left(\frac{i}{4\pi \ell}\right)^{k} - \sum_{k=0}^{s}\, {s\choose k}\, (s+k)!\, (-1)^k\, \left(\frac{i}{4\pi \ell}\right)^{k} \Big) =
\]
\[
=  \frac 12\,  \frac{i}{4\pi \ell}\, \Big( \sum_{k=0}^{s}\, {s\choose k}\, (s+k)! \left(\frac{i}{4\pi \ell}\right)^{k} - \sum_{k=0}^{s}\, {s\choose k}\, (s+k)!\, \overline{\left(\frac{i}{4\pi \ell}\right)^{k}} \Big) =
\]
\[
= -\frac{1}{4\pi \ell}\, \Im \, \Big( \sum_{k=0}^{s}\, {s\choose k}\, (s+k)! \left(\frac{i}{4\pi \ell}\right)^{k} \Big)
\]
We now recall that using \cite[eq. 10.39.6, p. 255 and eq. 13.2.8, p. 322]{nist}, one can write
\[
\sum_{k=0}^{s}\, {s\choose k}\, (s+k)!\, z^{k} = \frac{s!\, e^{\frac{1}{2z}}}{\sqrt{\pi z}}\, K_{s+\frac 12} \left(\frac{1}{2z} \right)
\]
hence
\begin{equation} \label{interm-1}
2\, \frac{(-1)^{\frac s2 -1}}{(2\pi\ell)^s}\, \sum_{\stackrel{k=1}{\text{$k$ even}}}^s\, {s\choose k-1}\, \frac{(-1)^{\frac k2} (s+k-1)!}{(4\pi \ell)^k} = \frac{2\, (-1)^{\frac s2}\, s!}{(2\pi)^{s+1}\, \ell^{s+\frac 12}}\, \Im \, \Big( e^{-i\, \frac \pi 4}\, K_{s+\frac 12} (-2\pi i\ell)\Big)
\end{equation}
Finally, we use \cite[eq. 10.27.8, p. 251]{nist}, relating the modified Bessel function $K_\nu$ to the Hankel function $H^{(1)}_\nu$, to write
\[
e^{-i\, \frac \pi 4}\, K_{s+\frac 12} (-2\pi i\ell ) = \frac{i\, (-1)^{\frac s2}\, \pi}{2}\, H^{(1)}_{s+\frac 12} (2\pi\ell)\, ,
\]
and the relation
\[
H^{(1)}_\nu (z) = J_\nu(z) + i\, Y_\nu(z)\, ,
\]
where the Bessel functions of first and second kind $J_\nu$ and $Y_\nu$ are real for real orders $\nu$ and positive real argument $z$, to obtain
\[
\Im \, \Big( e^{-i\, \frac \pi 4}\, K_{s+\frac 12} (-2\pi i)\Big) = \frac{(-1)^{\frac s2}\, \pi}{2}\, J_{s+\frac 12}(2\pi \ell)\, .
\]
Using it in \eqref{interm-1}, we obtain \eqref{primo-passo-eis}.

\vskip 0.2cm
\noindent \emph{Step 2}. Following the same ideas as in Step 1, we show that for $s\ge 1$ and odd
\begin{equation}\label{primo-passo-eis-odd}
2\, \frac{(-1)^{\frac{s+1}{2} -1}}{(2\pi\ell)^s}\, \sum_{\stackrel{k=1}{\text{$k$ odd}}}^s\, {s\choose k-1}\, \frac{(-1)^{\frac{k+1}{2}} (s+k-1)!}{(4\pi\ell)^k} = \frac{s!}{2\, (2\pi)^s\, \ell^{s+\frac 12}}\, J_{s+\frac 12} (2\pi \ell) \, , \quad \forall\, \ell \ge 1 
\end{equation}

\vskip 0.2cm
\noindent \emph{Step 3}. For all $s\ge1$ we have
\begin{equation}\label{penultimo-passo-eis}
\Big|\, \sum_{k=1}^s\, {s\choose k-1}\, \frac{(-1)^k}{(2m)^k\, (s+k)}\, B_{s+k} \Big| \le \frac{4\, s^2\, m^{s}}{2^s}\, , \quad \forall\, m\ge 1
\end{equation}

\noindent
We recall that Bernoulli numbers satisfy $B_n=0$ for $n\ge 2$ and odd, and the identity
\[
B_{2n} = \frac{(-1)^{n-1}\, 2\, (2n)!}{(2\pi)^{2n}}\, \zeta(2n)\, , \quad \forall\, n\ge 1\, .
\]
Hence, we write the left hand side of \eqref{penultimo-passo-eis} for $s$ even, for which the only non-vanishing terms correspond to $k$ even, as
\begin{equation}\label{penultimo-even}
2\, \frac{(-1)^{\frac s2 -1}}{(2\pi)^s}\, \sum_{\stackrel{k=1}{\text{$k$ even}}}^s\, {s\choose k-1}\, \frac{(-1)^{\frac k2} (s+k-1)!}{(4\pi m)^k}\, \zeta(s+k)
\end{equation}
and for $s$ odd, for which the only non-vanishing terms correspond to $k$ odd, as
\begin{equation}\label{penultimo-odd}
2\, \frac{(-1)^{\frac{s+1}{2} -1}}{(2\pi)^s}\, \sum_{\stackrel{k=1}{\text{$k$ odd}}}^s\, {s\choose k-1}\, \frac{(-1)^{\frac{k+1}{2}} (s+k-1)!}{(4\pi m)^k}\, \zeta(s+k)
\end{equation}
In both cases, using the Euler-MacLaurin formula for the Riemann zeta function, we can write for all fixed $N\ge 1$
\[
\zeta(s+k) = \sum_{n=1}^N\, \frac{1}{n^{s+k}} + R_{s+k}(N)
\]
where $|R_{s+k}(N)| \le 2 \max\{1, \frac{N}{s+k-1}\}\, N^{-s-k}$.

\noindent
Let us first consider the case $s$ even. From \eqref{penultimo-even} we are reduced to study the sum
\begin{equation}\label{even-q-fin}
\sum_{n=1}^N\, 2\, \frac{(-1)^{\frac s2 -1}}{(2\pi n)^s}\, \sum_{\stackrel{k=1}{\text{$k$ even}}}^s\, {s\choose k-1}\, \frac{(-1)^{\frac k2} (s+k-1)!}{(4\pi m n)^k} + 2\, \frac{(-1)^{\frac s2 -1}}{(2\pi)^s}\, \sum_{\stackrel{k=1}{\text{$k$ even}}}^s\, {s\choose k-1}\, \frac{(-1)^{\frac k2} (s+k-1)!}{(4\pi m)^k}\, R_{s+k}(N)
\end{equation}
For the first term we apply Step 1, and applying \eqref{primo-passo-eis} with $\ell = mn$ we find
\[
m^s\, \sum_{n=1}^N\, 2\, \frac{(-1)^{\frac s2 -1}}{(2\pi m n)^s}\, \sum_{\stackrel{k=1}{\text{$k$ even}}}^s\, {s\choose k-1}\, \frac{(-1)^{\frac k2} (s+k-1)!}{(4\pi m n)^k} = \frac{m^s\, s!}{2\, (2\pi)^s} \sum_{n=1}^N\, \frac{1}{(mn)^{s+\frac 12}}\, J_{s+\frac 12} (2\pi mn)
\]
Since
\[
\left| J_{s+\frac 12} (2\pi \ell) \right| \le \frac{(\pi \ell)^{s+\frac 12}}{\Gamma(s+1)}
\]
we get
\begin{equation} \label{stimiamo-1-somma}
\Big| m^s\, \sum_{n=1}^N\, 2\, \frac{(-1)^{\frac s2 -1}}{(2\pi m n)^s}\, \sum_{\stackrel{k=1}{\text{$k$ even}}}^s\, {s\choose k-1}\, \frac{(-1)^{\frac k2} (s+k-1)!}{(4\pi m n)^k} \Big| \le \frac{\pi^{\frac 12}\, N\, m^s}{2^{s+1}}\, , \quad \forall\, N\ge 1\, .
\end{equation}
For the second term of \eqref{even-q-fin}, we apply the crude estimate
\[
\Big| 2\, \frac{(-1)^{\frac s2 -1}}{(2\pi)^s}\, \sum_{\stackrel{k=1}{\text{$k$ even}}}^s\, {s\choose k-1}\, \frac{(-1)^{\frac k2} (s+k-1)!}{(4\pi m)^k}\, R_{s+k}(N) \Big| \le \frac{4\, (2s)! \, \max\{1, \frac{N}{s} \}}{(2\pi N)^s}\, \sum_{k=1}^s\, {s\choose k-1}\, \frac{1}{(4\pi m N)^k} =
\]
\[
= \frac{2\, (2s)! \, \max\{1, \frac{N}{s} \}}{(2\pi N)^{s+1}\, m}\, \left(1+\frac{1}{4\pi m N}\right)^s
\]
From this and \eqref{stimiamo-1-somma} it follows that for $s$ even, we obtain the estimate
\begin{equation}\label{ogni-n-even}
\Big| 2\, \frac{(-1)^{\frac s2 -1}}{(2\pi)^s}\, \sum_{\stackrel{k=1}{\text{$k$ even}}}^s\, {s\choose k-1}\, \frac{(-1)^{\frac k2} (s+k-1)!}{(4\pi m)^k}\, \zeta(s+k) \Big| \le \frac{\pi^{\frac 12}\, N\, m^s}{2^{s+1}} + \frac{2\, (2s)! \, \max\{1, \frac{N}{s} \}}{(2\pi N)^{s+1}\, m}\, \left(1+\frac{1}{4\pi m N}\right)^s
\end{equation}
which holds for all $N\ge 1$. Hence, choosing $N=2\, s^2$ in \eqref{ogni-n-even} and applying standard estimates for the factorial term we obtain
\[
\Big| 2\, \frac{(-1)^{\frac s2 -1}}{(2\pi)^s}\, \sum_{\stackrel{k=1}{\text{$k$ even}}}^s\, {s\choose k-1}\, \frac{(-1)^{\frac k2} (s+k-1)!}{(4\pi)^k}\, \zeta(s+k) \Big| \le \frac{4\, s^2\, m^s}{2^{s}} 
\]
The case $s$ odd follows exactly by the same argument using \eqref{primo-passo-eis-odd} in \eqref{penultimo-odd}.

\vskip 0.2cm
\noindent \emph{Step 4}. For all $q=\xi+i\eta$ with $\xi>0$, we have
\begin{equation}\label{ultimo-passo-eis}
\sum_{k=1}^s\, {s\choose k-1}\, \frac{(-1)^k}{2^k\, (s+k)}\, B_{s+k}\, \zeta(2q-1+s+k) = O\left(\frac{s^6}{2^s}\right)\, .
\end{equation}

In Step 3 we have an estimate for the left hand side with $\zeta(2q-1+s+k)=1$. Using the formula
\[
\zeta(2q-1+s+k) = \sum_{m=1}^M\, \frac{1}{m^{2q-1+s+k}} + R_{2q-1+s+k}(M)\, , \quad \forall\, M\ge 1
\]
where $|R_{2q-1+s+k}(M)| \le 2 \max\{1, \frac{M}{|2q-2+s+k|}\}\, M^{-2\xi +1-s-k}$, we have to estimate the sum
\[
\sum_{m=1}^M\, \frac{1}{m^{2q-1+s}}\, \sum_{k=1}^s\, {s\choose k-1}\, \frac{(-1)^k}{(2m)^k\, (s+k)}\, B_{s+k} + \sum_{k=1}^s\, {s\choose k-1}\, \frac{(-1)^k}{2^k\, (s+k)}\, B_{s+k}\, R_{2q-1+s+k}(M)
\]
and we can use Step 3. For the first term we apply \eqref{penultimo-passo-eis} to write
\[
\Big|\, \sum_{m=1}^M\, \frac{1}{m^{2q-1+s}}\, \sum_{k=1}^s\, {s\choose k-1}\, \frac{(-1)^k}{(2m)^k\, (s+k)}\, B_{s+k} \, \Big| \le \frac{4\, s^2}{2^s}\, \sum_{m=1}^M\, \frac{1}{m^{2\xi-1}}
\]
For the second term, we use the classical estimate
\[
|B_{2n}| \le 5\, \sqrt{\pi n}\, \left(\frac{n}{\pi e}\right)^{2n}
\]
and write the crude estimate
\[
\Big|\, \sum_{k=1}^s\, {s\choose k-1}\, \frac{(-1)^k}{2^k\, (s+k)}\, B_{s+k}\, R_{2q-1+s+k}(M) \, \Big| \le \frac{5\, \sqrt{2\pi}\, \max\{1, \frac{M}{s-2}\}}{M^{2\xi+s-1}}\, \sum_{k=1}^s\, {s\choose k-1}\, \frac{(s+k)^{s+k}}{(2M)^k\, (2\pi e)^{s+k}\, \sqrt{s+k}} \le
\] 
\[
\le \frac{5\, \sqrt{2\pi}\, \max\{1, \frac{M}{s-2}\}}{M^{2\xi+s-1}}\, \frac{(2s)^{2s}}{(2\pi e)^s}\, \sum_{k=1}^s\, {s\choose k-1}\, \frac{1}{(4\pi e M)^k} = \frac{5\, \sqrt{2\pi}\, \max\{1, \frac{M}{s-2}\}\, (2s)^{2s}}{M^{2\xi+s-1}\, (2\pi e)^s}\, \left(1+\frac{1}{4\pi e M}\right)^s
\]
We have thus obtained that for $s\ge 3$ and all $q=\xi +i\eta$ with $\xi >0$, the estimate
\[
\Big|\, \sum_{k=1}^s\, {s\choose k-1}\, \frac{(-1)^k}{2^k\, (s+k)}\, B_{s+k}\, \zeta(2q-1+s+k) \Big| \le \frac{4\, s^2}{2^s}\, \sum_{m=1}^M\, \frac{1}{m^{2\xi-1}} + \frac{5\, \sqrt{2\pi}\, \max\{1, \frac{M}{s-2}\}\, s^{2s}}{M^{2\xi+s-1}}\, \left(1+\frac{1}{4\pi e M}\right)^s
\]
holds for all $M\ge 1$. Choosing $M=2s^2$ the estimate \eqref{ultimo-passo-eis} follows.

\vskip 0.2cm
\noindent \emph{Step 5}. The proof is finished by using \eqref{ultimo-passo-eis} and the crude estimate
\[
\frac{\Gamma(s+q)}{s!} = O(|s+q|^\xi)\, ,
\]
since for all $a,b>0$  
\[
\sup_{y\in [a,b]}\, \frac{1+y^{2s}}{(1+y^{2})^{s}} \le \frac{1}{(1+a^{2})^{s}} + \frac{b^{2s}}{(1+b^{2})^{s}}\, .
\]
\end{proof}

\appendix

\section{Proof of \eqref{vero}} \label{app1}

Using the notation $\lfloor x \rfloor$ and $\{ x\}$ for the integer and fractional part of a real number, we have
$$
S_{k}(n) - \frac{1}{k+1}\, n^{k+1} - \frac 12\, n^{k} = \int_{0}^{n}\, \left[ (\lfloor x \rfloor +1)^{k} - x^{k} - \frac 12\, k x^{k-1} \right]\, dx =
$$
$$
= \int_{0}^{n}\, \left[ \sum_{j=0}^{k}\, \left( \begin{array}{c} k\\ j \end{array} \right)\, \lfloor x \rfloor ^{j} - \sum_{j=0}^{k}\, \left( \begin{array}{c} k\\ j \end{array} \right)\, \lfloor x \rfloor ^{j}\,  \{ x\}^{k-j} - \frac 12\, k \sum_{j=0}^{k-1}\, \left( \begin{array}{c} k-1\\ j \end{array} \right)\, \lfloor x \rfloor ^{j}\,  \{ x\}^{k-j-1} \right]\, dx =
$$
$$
= \int_{0}^{n}\, \Big[ \lfloor x \rfloor ^{k} \left( 1-\{ x\}^{k-k} \right) + \lfloor x \rfloor ^{k-1} \left( k - k \{ x\} - \frac 12\, k \right) +
$$
$$ + \sum_{j=0}^{k-2}\, \lfloor x \rfloor ^{j}\, \left( \left( \begin{array}{c} k\\ j \end{array} \right)\, (1- \{ x\}^{k-j}) - \frac 12\, k\, \left( \begin{array}{c} k-1\\ j \end{array} \right)\, \{ x\}^{k-j-1} \right) \Big]\, dx =
$$
$$
= \sum_{h=0}^{n-1}\, \int_{h}^{h+1}\, k\, \lfloor x \rfloor ^{k-1} \left( \frac 12 - \{ x\} \right)\, dx + \int_{0}^{n}\, \left[ \sum_{j=0}^{k-2}\, \left( \begin{array}{c} k\\ j \end{array} \right)\, \lfloor x \rfloor ^{j}\, \left( 1 - \{ x\}^{k-j} - \frac{k-j}{2}\, \{ x\}^{k-j-1} \right)\, \right] \, dx =
$$
$$
= \sum_{h=0}^{n-1}\, \int_{h}^{h+1}\, k\, h^{k-1} \left( \frac 12 - x+h \right) \, dx + \int_{0}^{n}\, \left[ \sum_{j=0}^{k-2}\, \left( \begin{array}{c} k\\ j \end{array} \right)\, \lfloor x \rfloor ^{j}\, \left( 1 - \{ x\}^{k-j} - \frac{k-j}{2}\, \{ x\}^{k-j-1} \right)\, \right] \, dx =
$$
$$
= \int_{0}^{n}\, \left[ \sum_{j=0}^{k-2}\, \left( \begin{array}{c} k\\ j \end{array} \right)\, \lfloor x \rfloor ^{j}\, \left( 1 - \{ x\}^{k-j} - \frac{k-j}{2}\, \{ x\}^{k-j-1} \right)\, \right] \, dx
$$
We can then write
\[
\Big| S_{k}(n) - \frac{1}{k+1}\, n^{k+1} - \frac 12\, n^{k} \Big| \le k\,  \int_{0}^{n}\, \sum_{j=0}^{k-2}\, \left( \begin{array}{c} k\\ j \end{array} \right)\, \lfloor x \rfloor ^{j}\, dx =
$$
$$ = \int_{0}^{n}\,  k^2(k-1) \sum_{j=0}^{k-2}\, \frac{1}{(k-j)(k-j-1)}\, \left( \begin{array}{c} k-2\\ j \end{array} \right)\, \lfloor x \rfloor ^{j}\, dx \le
\]
$$
\le k^3\, \int_{0}^{n}\, \sum_{j=0}^{k-2}\, \left( \begin{array}{c} k-2\\ j \end{array} \right)\, \lfloor x \rfloor ^{j}\, dx = k^3\, \int_{0}^{n}\, (\lfloor x \rfloor +1)^{k-2}\, dx \le \text{const}\, k^2\, n^{k-1}
$$

\section{Spectral properties of the terms with Legendre functions} \label{leg-spec}

We now give some properties of the functions used in the series \eqref{legendre-exp} in terms of the hyperbolic Laplacian $\Delta = - y^{2}\, \left( \frac{\partial^{2}}{\partial x^{2}} + \frac{\partial^{2}}{\partial y^{2}} \right)$. Using recurrence relations and the formulas for derivatives of the Legendre functions which can be found in \cite[vol. I]{E}, with the notation
\[
F_{q}(n,y):= \frac{y^{n+q}}{(1+y^{2})^{\frac n2 + \frac q2 + \frac 14}} \, \PPP^{-q+\frac 12}_{n+q-\frac 12} \left( \frac{y}{(1+y^{2})^{\frac 12}} \right)
\]
we find that for all $n\ge 0$
\[
\begin{array}{c}
-y^{2}\, \frac{\partial^{2}}{\partial y^{2}} \left[ F_{q}(n,y) \right] = \\[0.3cm]
= -(n+q)(n+q-1)\, F_{q}(n,y)  + 2(n+2q)(n+q)\, F_{q}(n+1,y) - (n+2q)(n+2q+1)\, F_{q}(n+2,y)\, .
\end{array}
\]
Hence for a series 
$$
\FFF_{q}(y) := \sum_{n=0}^{\infty}\, (-1)^{n} a_{n} F_{q}(n,y)\, , \quad y\in (0,\infty)
$$
with $\limsup |a_{n}|^{\frac 1n}\le 1$, we find
\begin{equation}\label{second-der}
-y^{2}\, \frac{\partial^{2}}{\partial y^{2}} \FFF_{q}(y) = \sum_{n=0}^{\infty}\, (-1)^{n} b_{n}\, F_{q}(n,y)
\end{equation}
where
\begin{equation}\label{system-1}
\left\{
\begin{array}{l}
b_{0} = q(1-q)\, a_{0}\\[0.2cm]
b_{1} = -q(1+q)\, a_{1} - 4q^{2}\, a_{0}\\[0.2cm]
b_{n} = -(n+q)(n+q-1) a_{n} -2(n+2q-1)(n+q-1) a_{n-1} - (n+2q-2)(n+2q-1) a_{n-2}, \quad n\ge 2
\end{array}
\right.
\end{equation}
It follows that
\begin{theorem}\label{serie-eis-x0}
For any $q$ the function
$$
\EEE_{q}(y) = \sum_{n=0}^{\infty}\, \frac{\Gamma(n+2q)}{n!\, \Gamma(2q)}\, F_{q}(n,y)
$$
satisfies $\Delta \EEE_{q}(y) = q(1-q) \EEE_{q}(y)$.
\end{theorem}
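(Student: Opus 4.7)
The plan is to reduce the statement to a routine check using the recursion (\ref{system-1}) that was just established. First, I will rewrite $\EEE_q$ in the form handled by (\ref{second-der}), namely $\EEE_q(y) = \sum_{n=0}^{\infty} (-1)^n a_n F_q(n,y)$ with
\[
a_n := (-1)^n\, \frac{\Gamma(n+2q)}{n!\,\Gamma(2q)}.
\]
Since $a_n$ grows polynomially in $n$, one has $\limsup |a_n|^{1/n} \le 1$, so the hypothesis of (\ref{second-der}) applies and
\[
\Delta \EEE_q(y) = -y^2\,\partial_y^2\,\EEE_q(y) = \sum_{n=0}^{\infty} (-1)^n b_n\, F_q(n,y),
\]
where $\{b_n\}$ is determined from $\{a_n\}$ by the system (\ref{system-1}). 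The whole proof then reduces to verifying the identity $b_n = q(1-q)\, a_n$ for every $n\ge 0$.

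For $n=0$ and $n=1$ this is a direct substitution: the first line of (\ref{system-1}) gives $b_0=q(1-q)a_0$ at once, and plugging $a_0=1$, $a_1=-2q$ into the second line yields $b_1 = 2q^2(q-1) = q(1-q)a_1$. For $n\ge 2$, the key observation is that
\[
\frac{a_{n-1}}{a_n} = -\frac{n}{n+2q-1},\qquad \frac{a_{n-2}}{a_n} = \frac{n(n-1)}{(n+2q-2)(n+2q-1)},
\]
so that the three coefficients appearing in the recursion collapse dramatically: dividing the third line of (\ref{system-1}) by $a_n$ gives
\[
\frac{b_n}{a_n} = -(n+q)(n+q-1) + 2n(n+q-1) - n(n-1),
\]
and a one-line expansion shows the right-hand side equals $q(1-q)$, independently of $n$. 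This is the only nontrivial step, and it is a purely algebraic simplification.

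Combining the three cases, $b_n = q(1-q)a_n$ for all $n\ge 0$, and hence
\[
\Delta\EEE_q(y) = \sum_{n=0}^{\infty} (-1)^n q(1-q)a_n\, F_q(n,y) = q(1-q)\,\EEE_q(y),
\]
which is the claim. I do not anticipate a genuine obstacle here: the only substantive input is the recursion (\ref{system-1}) (already derived from the Legendre function identities), and the rest is bookkeeping. If an obstruction were to appear, it would be a sign error in matching the convention of (\ref{second-der}) (which uses $(-1)^n a_n$) with the convention of the statement of Theorem \ref{serie-eis-x0} (which uses plain positive coefficients), so I would double-check that identification before committing to the algebraic simplification.
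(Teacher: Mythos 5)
Your proof is correct, and the algebra checks out: with $a_n=(-1)^n\,\Gamma(n+2q)/(n!\,\Gamma(2q))$ the ratios $a_{n-1}/a_n=-n/(n+2q-1)$ and $a_{n-2}/a_n=n(n-1)/\big((n+2q-2)(n+2q-1)\big)$ are right, and $-(n+q)(n+q-1)+2n(n+q-1)-n(n-1)=q(1-q)$ indeed holds identically in $n$. The paper reduces to exactly the same point --- imposing $b_n=q(1-q)a_n$ in \eqref{system-1} --- but then treats the resulting three-term recursion (recorded as \eqref{system-2}) as something to be \emph{solved} rather than verified: it forms the generating function $f(z)=\sum_n a_nz^n$, shows it obeys $(1+z)^2f'(z)+2q(1+z)f(z)=0$ with $f(0)=1$, and reads off $a_n=(-1)^n\Gamma(n+2q)/(n!\,\Gamma(2q))$ from $f(z)=(1+z)^{-2q}$. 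Your direct substitution is shorter and more elementary; what the generating-function route buys is a derivation of the coefficients rather than a check, plus the bound $\limsup_n|a_n|^{1/n}\le 1$ for free from analyticity of $(1+z)^{-2q}$ on $|z|<1$ (you get the same bound from $\Gamma(n+2q)/n!\sim n^{2q-1}$, which is fine). Your sign bookkeeping --- matching the plain positive coefficients of the statement with the $(-1)^na_n$ convention of \eqref{second-der} --- is exactly the identification needed, and the $n=0,1$ cases are handled the same way in both arguments. Neither you nor the paper justifies term-by-term differentiation of the series beyond the growth condition on the coefficients, so you are at the same level of rigor on that point.
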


\begin{proof}
Using \eqref{second-der} and \eqref{system-1} we have to find a solution of the system
\begin{equation}\label{system-2}
\left\{
\begin{array}{l}
a_{0} \in \C \\[0.2cm]
a_{1} = -2q\, a_{0}\\[0.2cm]
a_{n} = -2\, \frac{n+q-1}{n}\, a_{n-1} -\frac{n+2q-2}{n}\, a_{n-2}, \qquad n\ge 2
\end{array}
\right.
\end{equation}
with $\limsup |a_{n}|^{\frac 1n}\le 1$.

\noindent
Consider the generating function
$$
f(z) = \sum_{n\ge 0}\, a_{n}z^{n}
$$
with $a_{0}=1$, of the solution of \eqref{system-2}. From the recurrence relation of $(a_{n})$, it turns out that $f$ satisfies
\begin{equation} \label{equaz-f}
\left\{
\begin{array}{l}
(1+z)^{2}\, f'(z) + 2q(z+1)\, f(z) = 0 \\[0.3cm]
f(0)=1
\end{array}
\right.
\end{equation}
We now want to show that the solution $f$ of \eqref{equaz-f} is analytic for $|z|<1$. Letting for any $\alpha \in \C$
$$
(1+z)^{\alpha}:= \exp\Big( \alpha \log|1+z| + i\alpha \text{arg}(1+z)\Big)
$$
with $\text{arg}(1+z) \in (-\pi,\pi]$, we have that $(1+z)^{\alpha}$ is well defined as a single-valued analytic function on the cut plane $\C \setminus (-\infty,-1]$, hence in particular for $|z|<1$. It follows that
$$
f(z) = (1+z)^{-2q}
$$
is the solution of \eqref{equaz-f} and is analytic for $|z|<1$. Hence for all $n\ge 0$
$$
a_{n} = (-1)^{n}\, \frac{\Gamma(n+2q)}{n!\, \Gamma(2q)}
$$
is the solution of \eqref{system-2} with $a_{0}=1$ and $\limsup |a_{n}|^{\frac 1n}\le 1$.
\end{proof}

\noindent
It is interesting to notice that the functions $\EEE_{q}(y)$ have a much simpler formulation.

\begin{theorem}\label{eisen-ser}
For all $q$ we have
$$
\EEE_{q}(y) = \frac{2^{-q+\frac 12}}{\Gamma\left(q+\frac 12\right)} \, y^{q}\, ,
$$
hence for $\Re(q)>1$ it holds
$$
\EEE_{q}(z) := \sum_{(c,d)=1}\, \EEE_{q}\left( \frac{y}{|cz+d|^{2}} \right) = \mathrm{const.(q)}\ E(z,q)
$$
where $E(z,q)$ is the non-holomorphic Einsenstein series.
\end{theorem}

\begin{proof}
We start from the expression for $\EEE_q$ found in Theorem \ref{serie-eis-x0}. Using the relation \cite[eq. 14.3.21, p. 355]{nist}
\[
\PPP^\mu_\nu(t) = \frac{2^\mu\, \Gamma(1-2\mu)\, \Gamma(\nu+\mu+1)}{\Gamma(\nu-\mu+1)\, \Gamma(1-\mu)\, (1-t^2)^{\frac \mu 2}}\, C^{(\frac 12-\mu)}_{\nu+\mu} (t)
\]
in terms of the Gegenbauer functions $C_\alpha^{(\beta)}$, we get
\[
\EEE_{q}(y) = \frac{2^{-q+\frac 12}}{\Gamma\left(q+\frac 12\right)}\, \left( \frac{y}{1+y^2} \right)^q\, \sum_{n=0}^\infty\, \left(\frac{y}{(1+y^2)^{\frac 12}}\right)^n\, C^{(q)}_n \left(\frac{y}{(1+y^2)^{\frac 12}}\right)
\]
Finally, the generating function \cite[eq. 18.12.4, p. 449]{nist}
\[
(1-2t\, u+u^2)^{-\beta} = \sum_{n=0}^\infty\, u^n\, C^{(\beta)}_\alpha(t)
\]
valid for $|u|<1$, we conclude that
\[
\EEE_{q}(y) = \frac{2^{-q+\frac 12}}{\Gamma\left(q+\frac 12\right)} \, y^{q}\, .
\]
The last statement is immediate from the definition of the non-holomorphic Eisenstein series.
\end{proof}

\section*{Acknowledgements.}
We thank Kyle Butler-Moore for suggesting to use Gegenbauer functions in the proof of Theorem \ref{eisen-ser}, and the anonymous referee for very interesting comments. The authors are partially supported by ``Gruppo Nazionale per la Fisica Matematica'' (GNFM) of Istituto Nazionale di Alta Matematica (INdAM), and by the research project PRIN 2017S35EHN$\textunderscore$004 ``Regular and stochastic behaviour in dynamical systems'' of the Italian Ministry of Education and Research. C. Bonanno was also partially supported by the research project PRA$\textunderscore$2017$\textunderscore$22 ``Dynamical systems in analysis, geometry, mathematical logic and celestial mechanics'' of the University of Pisa. This work is part of the authors' research within the DinAmicI community, see www.dinamici.org



\begin{thebibliography}{100}

\bibitem{BJ} O.F. Bandtlow, O. Jenkinson, {\it Explicit eigenvalue estimates for transfer operators acting on spaces of holomorphic functions}, Adv. Math. \textbf{218} (2008), 902--925

\bibitem{imen} S. Ben Ammou, C. Bonanno, I. Chouari, S. Isola, {\it On the leading eigenvalue of transfer operators of the Farey map with real temperature}, Chaos Solitons Fractals \textbf{71} (2015), 60--65

\bibitem{BGI} C. Bonanno, S. Graffi, S. Isola, {\it Spectral analysis of transfer operators associated to Farey fractions}, Atti Accad. Naz. Lincei Cl. Sci. Fis. Mat. Natur. Rend. Lincei (9) Mat. Appl. \textbf{19} (2008), 1--23

\bibitem{BI} C. Bonanno, S. Isola, {\it A thermodynamic approach to two-variable Ruelle and Selberg zeta functions via the Farey map}, Nonlinearity {\bf 27} (2014), 897--926

\bibitem{BLZ}  R. Bruggeman, J. Lewis, D. Zagier, {\it Period functions for Maass wave forms and cohomology}, Mem. Amer. Math. Soc. \textbf{237} (2015), no. 1118

\bibitem{CM1} C.-H. Chang, D. Mayer, {\it The period function of the nonholomorphic Eisenstein series for ${\rm PSL}(2,Z)$}, Math. Phys. Electron. J. \textbf{4} (1998), paper 6

\bibitem{CM2} C.-H. Chang, D. Mayer, {\it Thermodynamic formalism and Selberg's zeta function for modular groups}, Regul. Chaotic Dyn. \textbf{5} (2000), 281--312

\bibitem{DH}  A. Deitmar, J. Hilgert, {\it A Lewis correspondence for submodular groups}, Forum Math. \textbf{19} (2007), 1075--1099

\bibitem{efrat} I. Efrat, {\it Dynamics of the continued fraction map and the spectral theory of $SL(2,{\mathbf{Z}})$}, Invent. Math. \textbf{114} (1993), 207--218

\bibitem{E} A. Erd\`ely et al.,
``Higher transcendental functions'', Bateman manuscript
project, vols. I-III, McGraw-Hill, New York, 1953-1955

\bibitem{E2} A. Erd\`ely et al.,
``Tables of integral transforms'', Bateman manuscript
project, vols. I-II, McGraw-Hill, New York, 1954

\bibitem{GR} I. Gradshteyn, I. Ryzhik, ``Tables of integrals, series, and products'', seventh edition, Academic Press, 2007

\bibitem{Is} S. Isola, {\it On the spectrum of Farey and Gauss maps}, Nonlinearity
\textbf{15} (2002), 1521--1539

\bibitem{iwaniec} H. Iwaniec, ``Spectral methods of automorphic forms'', second edition, American Mathematical Society, Providence, RI, 2002

\bibitem{Le} J.B. Lewis,
{\it Spaces of holomorphic functions equivalent to  even Maass
cusp forms}, Invent. Math. \textbf{127} (1997), 271--306

\bibitem{LeZa} J.B. Lewis, D. Zagier,
{\it Period functions for Maass wave forms. I}, Ann. Math.
\textbf{153} (2001), 191--258

\bibitem{Ma2} D.H. Mayer,
{\it On the thermodynamic formalism for the Gauss map}, Comm. Math. Phys. {\bf 130} (1990), 311--333

\bibitem{Ma3} D.H. Mayer,
{\it The thermodynamic formalism approach to Selberg's zeta
function for $PSL(2\Z)$}, Bull. Amer. Math. Soc. {\bf 25} (1991), 55--60

\bibitem{MMS} D.H. Mayer, T. M\"uhlenbruch, F. Str\"omberg, {\it The transfer operator for the Hecke triangle groups},
Discrete Contin. Dyn. Syst. \textbf{32} (2012), 2453--2484

\bibitem{MP} M. M\"oller, A.D. Pohl, {\it Period functions for Hecke triangle groups, and the Selberg zeta function as a Fredholm determinant}, Ergodic Theory Dynam. Systems \textbf{33} (2013), 247--283

\bibitem{nist} F.W.J. Olver, D.W. Lozier, R.F. Boisvert, C.W. Clark (Ed.), ``NIST Handbook of Mathematical Functions'', NIST and Cambridge University Press, New York, 2010

\bibitem{pohl}  A.D. Pohl, {\it A dynamical approach to Maass cusp forms}, J. Mod. Dyn. \textbf{6} (2012), 563--596

\bibitem{ram} S. Ramanujan, {\it On certain trigonometrical sums and their applications in the theory of numbers}, Transactions of the Cambridge Philosophical Society \textbf{22} (1918), 259--276

\bibitem{Ter} A. Terras, ``Harmonic analysis on symmetric spaces and applications. Vol. I'', Springer-Verlag, New York, 1985 

\end{thebibliography}
\end{document}